\newcommand{\Lip}{\mathbf{Lip}}
\renewcommand{\L}[1]{\mathbf{L^{#1}}}
\newcommand{\Lloc}[1]{\mathbf{L^{#1}_{loc}}}
\newcommand{\C}[1]{\mathbf{C^{#1}}}
\renewcommand{\d}{\mathrm{d}}
\newcommand{\R}{{\mathbb{R}}}
\newcommand{\reali}{{\mathbb{R}}}
\newcommand{\N}{{\mathbb{N}}}
\newcommand{\Cc}[1]{\mathbf{C_c^{#1}}}
\newcommand{\supp}{\mathrm{supp}}
\newcommand{\sign}{\mathrm{sign}}
\newcommand{\tv}{\mathrm{TV}}
\newcommand{\BV}{\mathbf{BV}}
\newcommand{\BVloc}{\mathbf{BV_{loc}}}
 \newcommand{\lip}{\mathrm{Lip}}
\newcommand{\modulo}[1]{{\left|#1\right|}}
\newcommand{\norma}[1]{{\left\|#1\right\|}}
\begin{document}

\allowdisplaybreaks

\title{Follow-the-leader approximations of macroscopic models for vehicular and pedestrian flows}
\titlerunning{Follow-the-leader approximations}
\author{M.\ Di Francesco, S.\ Fagioli, M.\ D.\ Rosini and G.\ Russo}
\institute{Marco Di Francesco \at DISIM, Universit\`a degli Studi dell'Aquila, via Vetoio 1 (Coppito), 67100 L’Aquila (AQ), Italy, \email{marco.difrancesco@univaq.it}
\and Simone Fagioli \at DISIM, Universit\`a degli Studi dell'Aquila, via Vetoio 1 (Coppito), 67100 L’Aquila (AQ), Italy, \email{simone.fagioli@dm.univaq.it}
\and Massimiliano D.\ Rosini \at Instytut Matematyki, Uniwersytet Marii Curie-Sk\l odowskiej, plac Marii Curie-Sk\l odowskiej 1, 20-031 Lublin, Poland, \email{mrosini@umcs.lublin.pl}
\and Giovanni Russo \at Dipartimento di Matematica ed Informatica, Universit\`a di Catania, Viale Andrea Doria 6, 95125 Catania, Italy, \email{russo@dmi.unict.it}}

\maketitle

\abstract{We review recent results and present new ones on a deterministic \emph{follow-the-leader} particle approximation of first and second order models for traffic flow and pedestrian movements.
We start by constructing the particle scheme for the first order Lighthill-Whitham-Richards (LWR) model for traffic flow.
The approximation is performed by a set of ODEs following the position of discretised vehicles seen as moving particles.
The convergence of the scheme in the many particle limit towards the unique entropy solution of the LWR equation is proven in the case of the Cauchy problem on the real line.
We then extend our approach to the Initial-Boundary Value Problem (IBVP) with time-varying Dirichlet data on a bounded interval.
In this case we prove that our scheme is convergent strongly in $\L1$ up to a subsequence.
We then review extensions of this approach to the Hughes model for pedestrian movements and to the second order Aw-Rascle-Zhang (ARZ) model for vehicular traffic.
Finally, we complement our results with numerical simulations.
In particular, the simulations performed on the IBVP and the ARZ model suggest the consistency of the corresponding schemes, which is easy to prove rigorously in some simple cases.
}

\section{Introduction}

The modeling of vehicular traffic flow can be considered as one of the most important challenges of applied mathematics within the last seventy years. Among its several repercussions on real-world applications we mention e.g.\ the development of smart traffic management systems for integrated applications of communications, control, and information processing technologies to the whole transport system. Other important resultant benefits are the implementation of complex problem solving in traffic management and the addressing of practical problems such as reducing congestion and related costs. These goals can be achieved by optimising the use of transport resources and infrastructures of the transport system as a whole, by bringing more efficiency in terms of traffic fluidity, and by providing procedures for system stabilisation.

Several analytical models for vehicular traffic have been developed in the last decades. In the first instance, they are classified into two main classes: microscopic models~-~taking into account each single vehicle~-~and macroscopic ones~-~dealing with averaged quantities. We refer to \cite{Bellomo2002, bellomo2011modeling, PiccoliTosinsurvey, rosini_book} for a survey of the most commonly used models currently available in the literature.

Recently, the availability of on-line data allows the implementation of real-time strategies aiming at avoiding (or mitigating) congested traffic. To address this task, the development and the application of analytical models that are easy-to-use and with a high performance in terms of time and reliability are essential requirements. In this sense, opposed to direct numerical `individual based' simulations of a large number of interacting vehicles~-~as typical when dealing with microscopic models~-~many researchers recommend using macroscopic models for traffic flow. The main advantages of the macroscopic approach with respect to the microscopic one are
\begin{itemize}
  \item the model is completely evolutive and is able to rapidly describe traffic situations at every time;
  \item the resulting description of queues evolution and of traveling times is accurate as the position of shock waves can be exactly computed and corresponds to queue tails;
  \item the macroscopic theory helps developing efficient numerical schemes suitable to describe very large number of vehicles;
  \item the model can be easily calibrated, validated and implemented as the number of parameters is low;
  \item the theory allows to state and possibly solve optimal management problems.
\end{itemize}

The macroscopic variables are the density $\rho$ (number of vehicles per unit length of the road), the velocity $v$ (space covered per unit time by the vehicles) and the flow $f$ (number of vehicles per unit time).
Clearly, the macroscopic variables are in general functions of time $t>0$ and space $x \in \R$.
By definition
\begin{equation}\label{eq:macro1}
f=\rho \, v.
\end{equation}
Moreover, the conservation of the number of vehicles along a road with neither entrances nor exits is expressed by the one dimensional scalar conservation law \cite{BressanBook}
\begin{equation}\label{eq:macro2}
\rho_t + f_x = 0.
\end{equation}
The system \eqref{eq:macro1}, \eqref{eq:macro2} has three unknown variables.
Hence a further condition has to be imposed.
There are two main approaches to do it: first and second order models.
First order models introduce a further explicit expression of one of the three unknown variables in terms of the remaining two. The prototype first order model is the Lighthill, Whitham \cite{LWR1} and Richards \cite{LWR2} (LWR) model.
The basic assumption of LWR is that the velocity of any driver depends on the density alone
\[v=\mathcal{V}(\rho),\]
where $\mathcal{V} \in \C1([0,\rho_{\max}]; [0,v_{\max}])$ is non-increasing, with $\mathcal{V}(\rho_{\max})=0$ and $\mathcal{V}(0) = v_{\max} > 0$, where $\rho_{\max} > 0$ is the maximal density corresponding to the `bumper to bumper' situation, and $v_{\max}$ is the maximal speed corresponding to the free road.
As a result, the LWR model is given by the scalar conservation law
\begin{equation}\label{eq:LWR}
\rho_t+[\rho\,\mathcal{V}(\rho)]_x=0.
\end{equation}
Second order macroscopic models close the system \eqref{eq:macro1}, \eqref{eq:macro2} by adding a further conservation law.
The most celebrated second order macroscopic model is the Aw, Rascle \cite{ARZ1} and Zhang \cite{ARZ2} (ARZ) model. Away from the vacuum state $\rho=0$, the ARZ model writes
\begin{align}\label{eq:ARZ_intro}
&\rho_t+[\rho\,v]_x=0,&
\left[\rho\left(v+p(\rho)\right)\right]_t + \left[\rho\left(v+p(\rho)\right)v\right]_x=0,
\end{align}
where the function $p(\rho)$ is introduced to take into account drivers' reactions to the state of traffic in front of them.

The main drawback of the LWR model is the unrealistic behaviour of the drivers adjusting instantaneously their velocities according to the densities they are experiencing. 
Moreover, the graph of a map $\rho \mapsto \left[\rho \, \mathcal{V}(\rho)\right]$ can not represent the cloud of points in the $(\rho,f)$-plane obtained by empirical measurements.
The ARZ model avoids these drawbacks of the LWR model. 
However, the system \eqref{eq:ARZ_intro} degenerates into just one equation at the vacuum state $\rho = 0$.
In particular, the solutions to the ARZ model do not depend continuously on the initial data in any neighbourhood of $ \rho=0$.

We point out that \eqref{eq:macro1} and \eqref{eq:macro2} are the only accurate physical laws in vehicular traffic theory.
All other equations result from coarse approximations of empirical observations.
However, as the dynamics of any living system are influenced by psychological effects, nobody would expect that traffic models could reach an accuracy comparable to that attained in other domains of science, such as thermodynamics or Newtonian physics.
Nevertheless, they can have sufficient descriptive power for the specific application-driven purpose, and they can help understanding non-trivial phenomena of vehicular traffic.

The use of macroscopic models relies on the \emph{continuum assumption}, namely on the assumption that the medium is indefinitely divisible without changing its physical nature.
This assumption is not justifiable in the context of vehicular traffic, but is accepted as a technical hypothesis.
In order to make more clear the continuum hypothesis, the study of the micro-to-continuum limit for first and second order models has been proposed in \cite{ColomboMarson, ColomboRossi} and \cite{AKMR2002, degond_rascle} respectively.
Our goal is to address said discrete-to-continuum limit in a rigorous analytic form, both for first and second order models, by proving that the macroscopic models can be \emph{solved} as a \emph{many particle limit} of  discrete (microscopic) ODE-based models.

We sketch here our approach for the LWR model \eqref{eq:LWR}, described in detail in Section~\ref{sec:LWRscheme}.
Fix an initial density $\bar{\rho}$.
Let $L \doteq \|\bar{\rho}\|_{\L1(\R)}$ be the total space occupied by the all vehicles (i.e.\ the total mass in a `continuum PDEs' language).
For a given positive $n \in \N$, we split $\bar{\rho}$ into $n$ platoons of `possibly fractional' vehicles, each one of equal length $\ell_n \doteq L/n$, with the endpoints of each platoon positioned at $\bar{x}_i \in \reali$, $i=0,\ldots,n$.
The points $\bar{x}_i$ are taken as initial condition to the microscopic Follow-The-Leader (FTL) model for vehicular traffic
\begin{equation}\label{eq:ODEintro}
\begin{cases}
   \dot{x}_{i}(t) = \mathcal{V}\left(\frac{\ell_n}{x_{i+1}(t) - x_i(t)}\right),& i \in \{0,\ldots,n-1\},\\
   \dot{x}_{n}(t) = v_{\max}.
\end{cases}
\end{equation}
The points $x_i(t)$ are interpreted as moving particles along the real line $\R$.
In Lemma~\ref{lem:maximum} below we prove that no collisions occur between the particles, as the distance between two consecutive particles is bounded from below by $\ell_n/\rho_{\max}$ for all times.
We then consider the \emph{discrete density}
\begin{align*}
&\rho^n(t,x) \doteq \sum_{i=0}^{n-1} R^n_i(t) \, \mathbf{1}_{[x_i(t),x_{i+1}(t))},
&R^n_i(t)\doteq\frac{\ell_n}{x_{i+1}(t)-x_i(t)},
\end{align*}
and prove that (up to a subsequence) its limit as $n \to \infty$  is the entropy solution to the LWR model \eqref{eq:LWR} in the Oleinik-Kruzhkov sense \cite{Kruzhkov, oleinik}.
The convergence of the particle scheme \eqref{eq:ODEintro} towards \eqref{eq:LWR} is proven rigorously in \cite{MarcoMaxARMA2015}, see also the improved results in \cite{DF_fagioli_rosini_UMI}.
We refer to Section~\ref{sec:LWR} for the details.

The result in \cite{MarcoMaxARMA2015,DF_fagioli_rosini_UMI} can be interpreted as a \emph{particle method} for the one dimensional scalar conservation law \eqref{eq:LWR}, which can be applied in the context of numerics.
Particle methods feature a long standing history as a numerical method for transport equations, see e.g.\ \cite{neunzert_klar_struckmeier} and the references therein.
Moreover, several effective numerical schemes for nonlinear conservation laws are proposed in the literature.
We mention the pioneering work of Glimm \cite{glimm} for systems, and the Wave-Front Tracking (WFT) algorithm proposed by Dafermos in \cite{Daf72} and improved later on by Di~Perna \cite{dip76} and Bressan \cite{Bre92}, see also \cite{holden2015front} and the references therein for more details.
Our approach differs from most of the numerical methods for scalar conservation laws in that it interprets the microscopic limit as a \emph{mean field limit of a system of interacting particles with nearest neighbour type interaction}, in the spirit of interacting particles systems in probability, kinetic theory, statistical mechanics, and mathematical biology, see e.g.\ \cite{dobrushin, morrey, onsager}. We stress in particular the fundamental role of many particle exclusion processes in probability, a subject which has been extensively studied in a vast literature in the past decades, see e.g.\ \cite{ferrari, ferrari_TASEP, landim} and the references therein.
It is worth recalling at this stage that Lions, Perthame and Tadmor proved in \cite{lions_perthame_tadmor} that nonlinear conservation laws can also be solved via kinetic approximation.

Unlike in most of the aforementioned articles, our approach should be regarded as a \emph{deterministic particle approximation} to the target PDE's.
A pioneering result is the one by Russo \cite{russo}, which applies to the linear diffusion equation with the diffusion operator replaced by a nearest neighbour interaction term, see also later generalizations in \cite{gosse_toscani, matthes_osberger}.
Our approach can be considered in the spirit of \cite{russo}, applied to scalar conservation laws.
We also mention the paper by Brenier and Grenier \cite{grenier}, which provides a particle approximation of the pressureless Euler system.

Our approach follows essentially the same strategy in the uniform estimates adopted for the WFT algorithm, except that a lighter notion of time-continuity is needed involving (a scaled version of) the \emph{$1$-Wasserstein distance}, see Section~\ref{sec:preliminaries} or \cite{AGS, villani} for more details.
A major advantage in using the Wasserstein distance relies on its identification with the $\L1$-topology in the space of \emph{pseudo-inverses of cumulative distributions}. Such an identification allows to recover formally the ODE system \eqref{eq:ODEintro} as the most natural way to approximate \eqref{eq:LWR} via Lagrangian particles. We briefly sketch it here. Let $\rho$ be the solution to \eqref{eq:LWR} and let
\[F(t,x)\doteq\int_{-\infty}^x \rho(t,x) \,{\d}x\,\in [0,L],\]
be its primitive.
The pseudo inverse variable $X(t,z)\doteq\inf\left\{x\in \reali \colon F(x)>z\right\}$, $z \in [0,L)$, formally satisfies the \emph{Lagrangian PDE}
\begin{equation*}
    X_t(t,z) = \mathcal{V}\left(X_z(t,z)^{-1}\right).
\end{equation*}
Now, if we replace the above $z$-derivative by a forward finite difference
\begin{equation*}
    X_z\approx \frac{X(t,z+\ell_n)-X(t,z)}{\ell_n} ,
\end{equation*}
and assume that $X$ is piecewise constant on intervals of length $\ell_n$, the ODE system \eqref{eq:ODEintro} is immediately recovered, with the structure
\[X(t,z)=\sum_i x_i(t) \, \chi_{[i\ell_n,(i+1)\ell_n)}(z).\]
The use of pseudo-inverse variables and Wasserstein distances in the framework of scalar conservation laws is not totally new, see e.g.\ \cite{BBL, CDL1}.
As far as the LWR model is concerned, in \cite{Newell} a simplified version of the LWR model is derived by introducing as new variable the cumulative number of vehicles passing through a location $x$ at time $t$, see also \cite{Aubin2010963, Daganzo2005187}.

A natural question concerning the particle approximation procedure described above is whether or not it can be applied to recover the solution to the IBVP with \emph{Dirichlet boundary condition}
\begin{equation}\label{eq:dirichlet_intro}
  \begin{cases}
   \rho_t +f(\rho)_x = 0, & x\in (0,1),\,\, t \in (0,T),\\
   \rho(0,x)=\bar{\rho}(x), & x\in (0,1),\\
   \rho(t,0)=\bar{\rho}_0(t), & t \in (0,T),\\
   \rho(t,1)=\bar{\rho}_1(t), & t \in (0,T).
   \end{cases}
\end{equation}
Such a question is addressed for the first time in the present work. Due to the propagation of the initial and boundary conditions along characteristic lines, it is well known that a concept of Dirichlet condition for a nonlinear conservation law has to be formulated in a set-valued sense. The first rigorous definition of entropy solution in this context was provided in \cite{BardosleRouxNedelec}, in which existence and uniqueness were proven in the scalar multidimensional case. In the one dimensional case, a more intuitive notion of entropy solution was provided in \cite{dubois_lefloch}, where the authors proved that at least in the scalar case the trace of the solution at the boundary is obtained by solving a Riemann problem within the trace itself and the boundary datum.

The substantial mismatch between Lagrangian and Eulerian speeds of propagation suggests that prescribing the behaviour of the particle system \eqref{eq:ODEintro} near the boundary should not involve characteristic speeds.
Inspired by the extremely simple structure of the FTL system \eqref{eq:ODEintro}, the boundary dynamics should follow a very natural process, possibly reminiscent of empirical observation in real contexts (e.g.\ a toll gate).
At the same time, such a dynamics should be able to capture the notion of entropy solution for the limiting IBVP for a large number of particles.
Our choice for the definition of the scheme in this case is pretty natural. We sketch it here in the simple case of constant boundary conditions $\rho(t,0^+)=\bar{\rho}_0$, $\rho(t,1^-)=\bar{\rho}_1$.

Initially $n+N+1$ particles of mass $\ell_n \doteq n^{-1} \, \|\bar{\rho}\|_{\L1(0,1)}$ are set in $\bar{x}_{-N},\ldots,\bar{x}_n$ with $\bar{x}_{-N} < \ldots < \bar{x}_{-1} < \bar{x}_0 \doteq 0 < \bar{x}_1< \ldots < \bar{x}_{n-1} < \bar{x}_n \doteq 1$.
The \emph{entering} condition is set by requiring that $\bar{x}_i \doteq i \, \ell_n / \bar{\rho}_0$, $i\in\{-N,\ldots,-1\}$, so that the queuing particles in $x<0$ are equidistant and matching the boundary datum $\bar{\rho}_0$.
The \emph{exit} condition is set by requiring that $\dot{x}_n = \mathcal{V}(\bar{\rho}_1)$.
We then let evolve the particles according to the corresponding version of the FTL scheme \eqref{eq:ODEintro}.
After some time, some of the queuing vehicles will enter the domain $[0,1]$ and some particle will leave it.
In general, in a finite time the distances between the particles in $x<0$ will not match the boundary datum $\bar{\rho}_0$, as well as the leftmost particle in $x\ge1$ will not necessarily move with velocity $\mathcal{V}(\bar{\rho}_1)$.
For this reason, we introduce a sufficiently small time step $\tau>0$ and, at each time $t=k\,\tau$, $k \in \N$, we rearrange the particles both in $x<0$ and $x>1$ so that the resulting densities match the corresponding boundary data, while on each time interval $[k\,\tau, (k+1)\,\tau)$, $k \in \N$, we let the particles evolve according to the corresponding version of FTL scheme \eqref{eq:ODEintro}, with $\dot{x}_n = \mathcal{V}(\bar{\rho}_1)$.
Let us underline that the number $N$ (which depends on $n$) should be prescribed initially depending on the final time $T$, in a way that some of the queuing vehicles are still left in $x<0$ at time $t=T$.

In order to extend our approach to time-varying boundary data, we discretise the boundary conditions with respect to time via a time step $\tau$, solve the particle system in each time interval with constant boundary data, and then rearrange the particles outside the domain according the boundary condition at the next time step.
We defer to Section~\ref{subsec:scheme} for more details.
We remark that in the case of constant boundary conditions for the continuum equation \eqref{eq:LWR} the rearrangement of the boundary datum at each time step $\tau$ is not necessary as long as no waves hit the boundary from the interior of the domain.
Such a situation also holds in our particle approximation, as we shall see in the simulations in Subsection~\ref{sec:simibvp}.

We prove rigorously in Section~\ref{Sec:dirichlet} that the above particle scheme converges strongly in $\L1$ to a limiting density $\rho$ as $n\rightarrow \infty$ and $\tau \rightarrow 0$. Such a result does not require any condition on how fast (or slow) $n$ should tend to infinity with respect to $\tau$ tending to zero. The consistency of the scheme is provided in simple cases, i.e.\ either constant initial data or boundary conditions yielding outgoing characteristic speeds at the boundary. As we explain below in Section \ref{Sec:dirichlet}, the definition of our approximating scheme is reminiscent of the notion of entropy solution provided in \cite{dubois_lefloch}, see Definition~\ref{def:entropy_sol_lef}, in which the trace of the solution $\rho$ to \eqref{eq:LWR} at the boundary is required to match the solution to a suitable Riemann problem. Our scheme actually prescribes a constant datum outside the domain at each time step, in a way to produce the approximation to a Riemann problem near the boundary. The simulations we provide in Subsection~\ref{sec:simibvp} support our conjecture that our scheme is consistent with the notion of entropy solution in the sense of \cite{BardosleRouxNedelec,dubois_lefloch}.

The deterministic particle approach started in \cite{MarcoMaxARMA2015} has seen significant extensions to similar models. A first one has been performed in \cite{DF_fagioli_rosini} on the ARZ model \eqref{eq:ARZ_intro}.
Despite the second order nature of ARZ, the strategy developed in \cite{MarcoMaxARMA2015} for the first order LWR model \eqref{eq:LWR} applies also in this case. This reveals that the multi-species nature of the ARZ model is quite relevant in the dynamics. Our rigorous results only deal with the convergence towards a weak solution.
The problem of the uniqueness of entropy solutions for the ARZ model is quite a hard task.
For this reason we do not address here the consistency of our scheme.
Let us point out that our approach for the ARZ system deeply differs from the one proposed in \cite{AKMR2002}, which essentially works away from the vacuum state and is implemented via a time discretisation and suitable space-time scaling. Our result in \cite{DF_fagioli_rosini} works near the vacuum state and no scaling is performed. Unlike previous numerical attempts (e.g.\ \cite{chalonsgoatin2007}) our method is conservative and is able to cope with the vacuum.
We briefly recall the result of \cite{DF_fagioli_rosini} in Section~\ref{sec:aw} below.

Another extension of our particle approach has been performed in \cite{DF_fagioli_rosini_russo} on a one-dimensional version of the \emph{Hughes model} \cite{Hughes2002} for pedestrian movements, see \eqref{eq:model} below.
In this model, the movement of a dense human crowd is modelled  via a `thinking fluid' approach in which the crowd is modelled as a continuum medium, with Eulerian velocity computed via a nonlocal constitutive law of the overall distribution of pedestrians.
Such a nonlocal dependence is encoded in the \emph{weighted} distance function $\phi$, computed at a quasi-equilibrium regime via a nonlinear \emph{running cost} function $c(\rho)$.
The function $\phi$ may be interpreted as an estimated exit time for a given distribution of pedestrians.
We refer to \cite{bellomo_bellouquid_2011, rosini_book} and the references therein for the mathematical modelling of human crowds, and to \cite{Hughes2002, DiFrancescoMarkowichPietschmannWolfram, AmadoriDiFrancesco, El-KhatibGoatinRosini, GoatinMimault, BurgerDiFrancescoMarkowichWolfram, AmadoriGoatinRosini, carrillo_martin_wolfram} for the rigorous analytical results and numerical simulations available in the literature on the Hughes model.

A fully satisfactory existence theory for the Hughes model is still missing.
A mathematical theory in this setting was first addressed in \cite{DiFrancescoMarkowichPietschmannWolfram}, in which the eikonal equation was replaced by two regularised versions involving a Laplacian term.
A rigorous mathematical treatment of the Riemann problems for the Hughes model without any regularization was performed independently in \cite{AmadoriDiFrancesco} and \cite{El-KhatibGoatinRosini}.
Said result led the basis to tackle the existence theory via a WFT strategy.
As in the paper \cite{AmadoriGoatinRosini}, we prove in \cite{DF_fagioli_rosini_russo} the existence of entropy solutions when the initial condition yields the formation of two distinct groups of pedestrians moving towards the two exits, with the emergence of a vacuum region in between, persisting until the total evacuation of the domain.
However, differently from \cite{AmadoriGoatinRosini} where the WFT method is applied, in \cite{DF_fagioli_rosini_russo} we develop a FTL particle approximation, taking advantage of the fact that our assumptions ensure that the Hughes model can be formulated as a two-sided LWR equations.
We refer to \cite{DF_fagioli_rosini_russo} and to Section~\ref{sec:particle} below for the precise formulation of the particle scheme.
As a result, we prove that the particle scheme converges under (essentially) the same conditions for which an existence result for entropy solutions is available in the literature (with the results in \cite{AmadoriGoatinRosini} in mind).

This chapter is structured as follows.
In Section~\ref{sec:LWR} we review the results in \cite{MarcoMaxARMA2015} and later improvements in \cite{DF_fagioli_rosini_UMI} about the convergence of the FTL scheme \eqref{eq:ODEintro} towards entropy solutions to the LWR equation \eqref{eq:LWR}. The main result is stated in Theorem \ref{thm:main_LWR}.
In Section~\ref{Sec:dirichlet} we prove our new result concerning the convergence of the FTL scheme for the IBV problem \eqref{eq:dirichlet_intro}.
The strong convergence of the scheme is proven in Theorem \ref{thm:main2}. The consistency of the scheme in some special cases is proven in Theorem \ref{thm:minor}.
In Section~\ref{sec:hughes} we review the results in \cite{DF_fagioli_rosini_russo} on the particle approximation of the Hughes model \eqref{eq:model}, with the main result stated in Theorem \ref{teo:2}.
In Section~\ref{sec:aw} we review the results in \cite{DF_fagioli_rosini} on the ARZ model \eqref{eq:ARZ_intro}. The main result is stated in Theorem \ref{thm:mainARZ}.
In Section~\ref{sec:numerics} we collect all the numerical simulations performed for the particle methods introduced in all the aforementioned models. In particular we present new simulations regarding the IBV problem \eqref{eq:dirichlet_intro} in Subsection \ref{sec:simibvp}

In the next subsection we recall the basic results on the Wasserstein distance that are used in this chapter.

\subsection{The Wasserstein distances}\label{sec:preliminaries}

We collect here the main concepts about one dimensional Wasserstein distances, see \cite{villani} for further details.
As already mentioned, we deal with probability densities with constant mass in time and we need to evaluate their distances at different times in the Wasserstein sense.

For a fixed mass $L>0$, we consider the space
\begin{equation*}
  \mathcal{M}_L \doteq \bigl\{\mu \hbox{ Radon measure on $\reali$ with compact support} \colon \mu\ge 0 \text{ and }\mu(\reali)=L \bigr\}.
\end{equation*}
For a given $\mu\in \mathcal{M}_L$, we introduce the pseudo-inverse variable $X_\mu \in \L1([0,L];\R)$ as
\begin{equation}\label{eq:pseudoinverse}
X_\mu(z) \doteq \inf \bigl\{ x \in \R \colon \mu((-\infty,x]) > z \bigr\}.
\end{equation}
Clearly, $X_\mu$ is non decreasing on $[0,L]$, and locally constant on `mass intervals' on which $\mu$ is concentrated. $X_\mu$ may have (increasing) jumps if the support of $\mu$ is not connected. By abuse of notation, in case $\mu=\rho\,\mathcal{L}_1$ is absolutely continuous with respect to the Lebesgue measure, we denote its pseudo-inverse variable by $X_\rho$.

For $L=1$, the one-dimensional \emph{$1$-Wasserstein distance} between $\rho_1,\rho_2\in \mathcal{M}_1$ (defined in terms of optimal plans in the Monge-Kantorovich problem, see e.g.\ \cite{villani}) can be defined as
\[
W_1(\rho_1,\rho_2) \doteq \norma{X_{\rho_1}-X_{\rho_2}}_{\L1([0,1];\reali)}.
\]
We introduce the \emph{scaled $1$-Wasserstein distance} between $\rho_1,\rho_2\in \mathcal{M}_L$ as
\begin{equation}\label{eq:wass_equiv0}
    W_{L,1}(\rho_1,\rho_2)\doteq \norma{X_{\rho_1}-X_{\rho_2}}_{\L1([0,L];\reali)} .
\end{equation}
Indeed, a straightforward computation yields $W_{L,1}(\rho_1,\rho_2) = L \, W_1(L^{-1}\rho_1,L^{-1}\rho_2)$.
The distance $W_{L,1}$ inherits all the topological properties of the $1$-Wasserstein distance for probability measures. In particular, a sequence $(\rho_n)_n$ in $\mathcal{M}_L$ converges to $\rho\in \mathcal{M}_L$ in $W_{L,1}$ if and only if for any $\varphi\in \C0(\reali;\reali)$ growing at most linearly at infinity
\[
\lim_{n\to\infty}\int_\reali \varphi(x) \,{\d}\rho_n(x) = \int_\reali \varphi(x) \,{\d}\rho(x) .
\]

We now state a technical result which will serve in the sequel of the chapter.

\begin{theorem}[Generalised Aubin-Lions lemma {}]\label{thm:aubin}
Assume $v:[0,\infty)\rightarrow [0,\infty)$ is a continuous and strictly monotone function. 
Let $T, L>0$, $a,b\in\R$ be fixed with $a<b$.
Let $(\rho^n)_n$ be a sequence in $\L\infty((0,T);\,\L1(\R))$ with $\rho^n(t,\cdot) \ge 0$ and $\|\rho^n(t,\cdot)\|_{\L1(\R)}=L$ for all $n \in \N$ and $t\in [0,T]$.
Assume further that
\begin{gather}
\label{Abis}\tag{H1}
\sup_{n\in \N} \left[ \int_0^T \left[\vphantom{\sum}\|v(\rho^n(t,\cdot))\|_{\L1([a,b])} + \tv(v(\rho^n(t,\cdot));\,[a,b])\right]{\d}t\right] < \infty,
\\
\label{Bbis}\tag{H2}
\lim_{h \downarrow 0} \left[ \sup_{n\in \N} \left[ \int_0^{T-h} W_{L,1}(\rho^n(t+h,\cdot),\rho^n(t,\cdot)) ~{\d}t \right] \right] = 0.
\end{gather}
Then, $(\rho^n)_n$ is strongly relatively compact in $\L1([0,T]\times [a,b])$.
\end{theorem}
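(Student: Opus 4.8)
The plan is to deduce this "generalised Aubin--Lions lemma" from a classical compactness criterion in $\L1$, namely the Fréchet--Kolmogorov (Riesz) theorem in the $(t,x)$ variables on the cylinder $[0,T]\times[a,b]$. Recall that a bounded subset of $\L1([0,T]\times[a,b])$ is relatively compact if and only if it is uniformly continuous with respect to translations in $t$ and in $x$ (suitably truncated near the boundary of the cylinder). So the task splits into three parts: (i) an a priori $\L1$-bound on $\rho^n$, (ii) equicontinuity under spatial shifts, and (iii) equicontinuity under time shifts.

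First I would handle the spatial translations. The key observation is that strict monotonicity and continuity of $v$ make $v$ a homeomorphism onto its image, so $\rho^n = v^{-1}(v(\rho^n))$ with $v^{-1}$ continuous; on the compact range of densities this $v^{-1}$ is uniformly continuous, hence control of $v(\rho^n(t,\cdot))$ in $\mathbf{BV}([a,b])$ transfers to control of $\rho^n(t,\cdot)$ in a modulus-of-continuity sense after passing through $v^{-1}$. More precisely, \eqref{Abis} gives a uniform bound on $\int_0^T \tv(v(\rho^n(t,\cdot));[a,b])\,\d t$, and the standard estimate $\|g(\cdot+\xi)-g(\cdot)\|_{\L1([a,b-\xi])}\le |\xi|\,\tv(g;[a,b])$ yields
\[
\int_0^T \|v(\rho^n(t,\cdot+\xi))-v(\rho^n(t,\cdot))\|_{\L1([a,b-|\xi|])}\,\d t \le |\xi|\sup_n\int_0^T\tv(v(\rho^n);[a,b])\,\d t \to 0
\]
as $\xi\to0$, uniformly in $n$; composing with the uniformly continuous $v^{-1}$ (and using that all the $v(\rho^n)$ take values in a fixed bounded set, which also follows from \eqref{Abis} via the $\L1$ plus $\mathbf{BV}$ bound giving an $\L\infty$ bound) transfers this to an equicontinuity-in-$x$ statement for $\rho^n$ itself. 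The $\L1$-in-$(t,x)$ bound on $[a,b]$ needed for (i) comes similarly: $v(\rho^n(t,\cdot))$ is bounded in $\L\infty([a,b])$ uniformly in $n$ and $t$ by \eqref{Abis}, hence so is $\rho^n$, and multiplying by $|[a,b]|\cdot T$ gives the bound.

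The time translations are where the Wasserstein hypothesis \eqref{Bbis} enters, and this is the part I expect to be the main obstacle. The difficulty is that $W_{L,1}$ controls horizontal (transport) discrepancies while $\L1$ measures vertical ones, and without a spatial regularity bound the two are not comparable. The remedy is an interpolation-type inequality: for fixed $t$, writing $g=\rho^n(t,\cdot)$ and $\tilde g = \rho^n(t+h,\cdot)$, one shows
\[
\|\tilde g - g\|_{\L1([a,b])} \le C\Bigl(\tv(g;[a',b']) + \tv(\tilde g;[a',b'])\Bigr)^{1/2}\, W_{L,1}(\tilde g, g)^{1/2} + (\text{boundary terms}),
\]
on a slightly enlarged interval $[a',b']\supset[a,b]$, or more simply a bound of the form $\|\tilde g - g\|_{\L1([a,b])}\le \tv(\tilde g;[a',b'])\cdot(\text{something}) + $ a term controlled by $W_{L,1}$ directly. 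Since $W_{L,1}$ is the $\L1$-distance of pseudo-inverses, one can argue via the pseudo-inverse functions $X_{\rho^n(t,\cdot)}$: a small $\L1$-change in $X$ combined with a $\mathbf{BV}$ bound on the direct density forces a small $\L1$-change in the density. Integrating such a pointwise-in-$t$ inequality in $t$, applying Hölder in $t$ to the product (using that $\tv(v(\rho^n);[a,b])$ is in $\L1_t$ uniformly by \eqref{Abis}, which after composition with $v^{-1}$ — monotone, hence $\tv$ is only rescaled by the modulus of $v^{-1}$ — gives an $\L1_t$ bound on $\tv(\rho^n(t,\cdot))$ too), and invoking \eqref{Bbis}, yields
\[
\lim_{h\downarrow0}\sup_n \int_0^{T-h}\|\rho^n(t+h,\cdot)-\rho^n(t,\cdot)\|_{\L1([a,b])}\,\d t = 0.
\]
The care needed here is genuine: one must work on an interval slightly larger than $[a,b]$ so that mass entering or leaving through the endpoints does not spoil the estimate, and one must be careful that the $\mathbf{BV}$ bound is only in $\L1_t$, not $\L\infty_t$, so a straightforward Hölder pairing (exponents $1$ and $\infty$ will not do) forces the square-root interpolation rather than a linear bound — this is exactly where the $1/2$ powers come from and why $W_{L,1}^{1/2}$ appears under the integral, after which Jensen/Hölder in $t$ finishes the job.

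With (i), (ii), (iii) in hand, the Riesz--Fréchet--Kolmogorov theorem on $[0,T]\times[a,b]$ gives strong relative compactness of $(\rho^n)_n$ in $\L1([0,T]\times[a,b])$, which is the claim. I would present (ii) first since it is the cleanest and sets up the composition-with-$v^{-1}$ machinery, then (i) as a byproduct of the $\L\infty$ bound used there, and finally (iii) as the substantive step.
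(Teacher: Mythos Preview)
The paper does not prove this theorem here; right after the statement it says ``We refer to the Appendix of \cite{DF_fagioli_rosini_UMI} for the proof of Theorem~\ref{thm:aubin}.'' So there is no in-paper proof to compare against; the argument lives in the appendix of that reference, where it is cast in the Rossi--Savar\'e abstract Aubin--Lions framework (a normally coercive functional supplying spatial compactness, a compatible pseudo-distance --- here $W_{L,1}$ --- supplying time tightness).

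Your overall plan --- reduce to a Fr\'echet--Kolmogorov/Simon criterion in $(t,x)$, with \eqref{Abis} giving spatial equicontinuity and \eqref{Bbis} combined with an interpolation against the $\BV$ bound giving temporal equicontinuity --- is the natural strategy and is in the same spirit as the cited proof. The interpolation idea (mollify at scale $\varepsilon$, balance an $\varepsilon\cdot\tv$ term against a $W_1/\varepsilon^\alpha$ term, H\"older in $t$) is exactly the right mechanism.

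There is, however, a genuine gap in the execution. You assert that \eqref{Abis} gives a uniform-in-$(n,t)$ bound on $\|v(\rho^n(t,\cdot))\|_{\L\infty([a,b])}$, and you use this (a) to restrict $v^{-1}$ to a compact interval where it is uniformly continuous, and (b) to transfer the $\tv$ bound from $v(\rho^n)$ to $\rho^n$. But \eqref{Abis} is only a bound on the \emph{time integral} of $\|v(\rho^n(t,\cdot))\|_{\L1}+\tv(v(\rho^n(t,\cdot)))$; it yields an $\L1_t$ bound on the $\BV_x$ norm, not an $\L\infty_t$ bound. Nothing in the hypotheses rules out $\rho^n(t,\cdot)$ being arbitrarily large on a short time interval. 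Consequently the range of $\rho^n$ need not sit in a fixed compact set, $v^{-1}$ need not be uniformly continuous on the relevant values, and the claim ``$\tv$ is only rescaled by the modulus of $v^{-1}$'' fails in general: take $v(\rho)=\rho^2$, let $\rho$ oscillate $k$ times between $0$ and $\varepsilon$, then $\tv(\rho)=k\varepsilon$ while $\tv(v(\rho))=k\varepsilon^2$, so no uniform control of $\tv(\rho)$ by $\tv(v(\rho))$ is possible without a Lipschitz bound on $v^{-1}$, which is not assumed. The fix is to run the compactness argument entirely at the level of $v(\rho^n)$ (so that the $\L1_t(\BV_x)$ control from \eqref{Abis} is used as is, without composition), extract an a.e.-convergent subsequence, apply the continuous $v^{-1}$ pointwise to get a.e.\ convergence of $\rho^n$, and then upgrade to $\L1$ convergence by a Vitali/uniform-integrability argument; alternatively, invoke the Rossi--Savar\'e lemma directly, which is designed precisely to accept integrated-in-time coercivity of the type \eqref{Abis}.
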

We refer to the Appendix of \cite{DF_fagioli_rosini_UMI} for the proof of Theorem~\ref{thm:aubin}.
We will sometimes consider the following condition:
\begin{equation}\label{B}\tag{H2$'$}
\text{There exists a constant $C>0$ independent of $n$ such that $W_{L,1}(\rho^n(t,\cdot),\rho^n(s,\cdot))\leq C \, |t-s|$ for all $s,t\in (0,T)$.}
\end{equation}
We point out that \eqref{B} implies \eqref{Bbis} and that it is assumed in both \cite[Theorem~3.5]{DF_fagioli_rosini_UMI} and \cite[Theorem~3.2]{MarcoMaxARMA2015}.

\section{The LWR model}\label{sec:LWR}

In this section we review the results obtained in \cite{MarcoMaxARMA2015}, later improved in \cite{DF_fagioli_rosini_UMI}, on the Cauchy problem for the LWR model \eqref{eq:LWR}
\begin{equation}\label{eq:cauchy}
\begin{cases}
  \rho_t + f(\rho)_x = 0,&  (t,x)\in\R_+\times\R,\\
  \rho(0,x)=\bar\rho(x), & x\in\R,
\end{cases}
\end{equation}
where $f(\rho) \doteq \rho \, v(\rho)$.
If $\rho_{\max} > 0$ is the maximal density corresponding to the situation in which the vehicles are bumper to bumper, and $v_{\max}$ is the maximal speed corresponding to the free road, then the initial datum $\bar\rho$ and the velocity map $v$ are assumed to satisfy the following conditions:
\begin{gather}\tag{I1}\label{I1}
\bar\rho \in \L\infty \cap \L1(\R;[0,\rho_{\max}]),
\\\tag{V1}\label{V1}
v \in \C1([0,\rho_{\max}];[0,v_{\max}]),\quad
v' < 0,\quad
v(0) = v_{\max},\quad
v(\rho_{\max}) = 0.
\end{gather}
In some cases we require also one of the following conditions:
\begin{gather}\tag{I2}\label{I2}
\bar\rho \in \BV(\R;[0,\rho_{\max}]),
\\\tag{V2}\label{V2}
[0,\rho_{\max}]\ni \rho \mapsto \rho \, v'(\rho) \in \overline{\R}_- \text{ is non increasing}.
\end{gather}

\begin{example}[Examples of velocities in vehicular traffic]
The prototype for the velocity in vehicular traffic $v_{GS}(\rho) \doteq v_{\max} \left(1-\frac{\rho}{\rho_{\max}}\right)$ by Greenshields \cite{Greenshields} clearly satisfies the assumptions \eqref{V1}, \eqref{V2}. The same holds for the Pipes-Munjal velocity \cite{pip}
\begin{align*}
    &v_{PM}(\rho) \doteq v_{\max} \, \left[1 - \left(\frac{\rho}{\rho_{\max}}\right)^\alpha \right],
    & \alpha>0,
\end{align*}
in which the concavity of the flux $\rho \, v_{PM}(\rho)$ degenerates at $\rho=0$.
Further examples of speed-density relations that satisfy \eqref{V1}, \eqref{V2} are
\begin{align*}
&v_{GB}(\rho) \doteq v_{\max} \, \left[\log\left(\frac{\rho_{\max}+\alpha}{\alpha}\right)\right]^{-1} \log\left(\frac{\rho_{\max}+\alpha}{\rho+\alpha}\right) ,&
&\alpha >0,
\\
&v_U(\rho) \doteq v_{\max} \left[ 1 - e^{-\rho_{\max}} \right]^{-1} \left[ e^{-\rho} - e^{-\rho_{\max}} \right],
\end{align*}
that result from a slight modification of that ones proposed by Greenberg \cite{Greenberg} and Underwood \cite{Underwood} respectively.
\end{example}

\begin{definition}\label{def:entro_sol_LWR}
Assume \eqref{I1} and \eqref{V1}.
We say that $\rho\in \L\infty(\R_+\times\R)$ is an \emph{entropy solution} to the Cauchy problem \eqref{eq:cauchy} if $\rho(t,\cdot)\rightarrow \bar{\rho}$ in the weak$^*$ $\L\infty$ sense as $t\downarrow 0$ and
\[
  \iint_{\R_+\times\R} \, \Bigl[|\rho(t,x)-k| \, \varphi_t(t,x) + \sign(\rho(t,x) - k) \bigl[ f(\rho(t,x)) - f(k)\bigr] \varphi_x(t,x)\Bigr] {\d}x ~{\d}t\geq 0
\]
for all $\varphi\in \Cc\infty((0,\infty)\times \R)$ with $\varphi\geq 0$ and for all $k\geq 0$.
\end{definition}

We point out that the above definition is slightly weaker than the definition in \cite{Kruzhkov}.
The next theorem collects the uniqueness result in \cite{Kruzhkov} and its variant in \cite{chen_rascle}.
\begin{theorem}[\cite{chen_rascle, Kruzhkov}]
Assume \eqref{I1} and \eqref{V1}.
Then there exists a unique entropy solution to the Cauchy problem \eqref{eq:cauchy} in the sense of Definition~\ref{def:entro_sol_LWR}.
\end{theorem}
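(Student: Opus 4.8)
The plan is to prove existence and uniqueness by two independent arguments, following the classical programme of Kruzhkov~\cite{Kruzhkov} together with the refinement of Chen and Rascle~\cite{chen_rascle}; the latter is what makes it possible to work with the weak entropy formulation of Definition~\ref{def:entro_sol_LWR}, in which only the Kruzhkov constants $k\geq0$ are tested and the initial datum is attained merely in the weak-$*$ $\L\infty$ sense.

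For existence I would use vanishing viscosity. Consider $\rho^\varepsilon_t + f(\rho^\varepsilon)_x = \varepsilon\,\rho^\varepsilon_{xx}$ with a mollified initial datum $\bar\rho^\varepsilon$ satisfying $0\leq\bar\rho^\varepsilon\leq\rho_{\max}$ and $\bar\rho^\varepsilon\to\bar\rho$ in $\L1(\R)$. Since $f(0)=f(\rho_{\max})=0$ by \eqref{V1}, the constants $0$ and $\rho_{\max}$ are stationary solutions, so the parabolic comparison principle gives $0\leq\rho^\varepsilon\leq\rho_{\max}$. When $\bar\rho$ is also in $\BV$ (condition \eqref{I2}), a uniform bound on the total variation of $\rho^\varepsilon(t,\cdot)$ provides compactness in $\Lloc1(\R_+\times\R)$; passing to the limit $\varepsilon\downarrow0$ in the viscous entropy inequalities — obtained by multiplying the equation by $\sign(\rho^\varepsilon-k)\,\varphi$ with $\varphi\in\Cc\infty((0,\infty)\times\R)$, $\varphi\geq0$ — yields an entropy solution in the sense of Definition~\ref{def:entro_sol_LWR}, with $\rho(t,\cdot)\to\bar\rho$ strongly in $\Lloc1$, hence weakly-$*$ in $\L\infty$, as $t\downarrow0$. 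The general case \eqref{I1} then follows by approximating $\bar\rho$ in $\L1(\R)$ by $\BV$ data and using the $\L1$-contraction obtained in the uniqueness step to pass to the limit. (Front tracking, or the Follow-the-Leader scheme of this chapter, provide alternative constructions.)

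For uniqueness, let $\rho$ and $\sigma$ be two entropy solutions in the sense of Definition~\ref{def:entro_sol_LWR}. The first step is the a priori bound $0\leq\rho,\sigma\leq\rho_{\max}$ a.e.: testing the entropy inequality with the admissible constants $k=0$ and $k=\rho_{\max}$ and using $\bar\rho\in[0,\rho_{\max}]$ excludes values outside $[0,\rho_{\max}]$, so that $f(\rho),f(\sigma)$ are unambiguously defined and $\modulo{f'}\leq\Lambda\doteq\norma{f'}_{\L\infty([0,\rho_{\max}])}$. The second step is Kruzhkov's doubling of variables: one writes the entropy inequality for $\rho(t,x)$ with the constant $k=\sigma(s,y)$ and the one for $\sigma(s,y)$ with the constant $k=\rho(t,x)$ — both admissible precisely because the two solutions take values in $[0,\rho_{\max}]\subset[0,\infty)$, which is exactly where the Chen--Rascle refinement is used — adds them against a test function concentrating on the diagonal $\{t=s,\ x=y\}$, and passes to the limit to obtain the Kato inequality
\[
\iint_{\R_+\times\R}\Bigl[\modulo{\rho-\sigma}\,\varphi_t + \sign(\rho-\sigma)\bigl(f(\rho)-f(\sigma)\bigr)\,\varphi_x\Bigr]{\d}x\,{\d}t\geq0
\]
for all $0\leq\varphi\in\Cc\infty((0,\infty)\times\R)$. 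Choosing $\varphi$ to approximate the indicator of a backward cone of slope $\Lambda$ and exploiting the finite speed of propagation, this gives, for a.e.\ $0<s<t$ and every $R>0$,
\[
\int_{\modulo{x}\leq R}\modulo{\rho(t,x)-\sigma(t,x)}\,{\d}x \;\leq\; \int_{\modulo{x}\leq R+\Lambda(t-s)}\modulo{\rho(s,x)-\sigma(s,x)}\,{\d}x .
\]
Letting $s\downarrow0$ and using that the right-hand side tends to $0$, since both problems share the same initial datum $\bar\rho$, forces $\rho=\sigma$ a.e.

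I expect two main obstacles. First, the bookkeeping in the doubling argument: one must ensure that every Kruzhkov constant appearing is nonnegative, since Definition~\ref{def:entro_sol_LWR} only tests $k\geq0$ — this is what forces the $\L\infty$ bound to be established first and is the reason~\cite{chen_rascle} is invoked alongside~\cite{Kruzhkov}. Second, closing the contraction estimate as $s\downarrow0$: since Definition~\ref{def:entro_sol_LWR} requires only weak-$*$ attainment of the initial data, one must complement it with the existence of a strong $\Lloc1$ trace of an entropy solution at $t=0$ (itself a consequence of the entropy inequalities), which must then coincide with the weak-$*$ limit $\bar\rho$, so that the right-hand side above indeed vanishes.
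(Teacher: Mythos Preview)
The paper does not prove this theorem; it simply records it as a known result from the literature, attributing existence and uniqueness to Kruzhkov~\cite{Kruzhkov} and the variant with weak-$*$ initial trace to Chen and Rascle~\cite{chen_rascle}. Your sketch is an accurate outline of precisely that classical programme --- vanishing viscosity (or an equivalent approximation) for existence, doubling of variables for the Kato inequality, and the Chen--Rascle argument to close the $s\downarrow0$ limit when only weak-$*$ attainment of the datum is assumed --- so there is no divergence to discuss: you have supplied what the paper deliberately omits.

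One small caution on your ``first step'' in the uniqueness argument: you claim that testing the entropy inequality at $k=0$ and $k=\rho_{\max}$ rules out values of $\rho$ outside $[0,\rho_{\max}]$, but note that in Definition~\ref{def:entro_sol_LWR} the flux $f(\rho)=\rho\,v(\rho)$ is only defined for $\rho\in[0,\rho_{\max}]$ by \eqref{V1}, so the inequality is not even well-posed otherwise. In practice one either reads the range constraint as part of the definition, or extends $f$ in a Lipschitz fashion outside $[0,\rho_{\max}]$ and then proves the invariance; either way the step is routine, but as written it is slightly circular.
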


\subsection{The follow-the-leader scheme and main result}\label{sec:LWRscheme}

We now introduce rigorously our FTL approximation scheme for \eqref{eq:cauchy}.
Assume \eqref{I1} and \eqref{V1}.
Let
\begin{align*}
&L \doteq \|\bar\rho\|_{\L1(\R)},
&R \doteq \|\bar\rho\|_{\L\infty(\R)}.
\end{align*}
Fix $n\in\mathbb{N}$ sufficiently large.
Let $\ell_n \doteq L/n$ and $\bar{x}^n_1,\ldots,\bar{x}^n_{n-1}$ be defined recursively by
\[
\begin{cases}
\bar{x}^n_1 \doteq \sup \Bigl\{x\in\R\colon\int_{-\infty}^x\bar\rho(x) \,{\d}x < \ell_n \Bigr\},
\\[5pt]
\bar{x}^n_i \doteq \sup \Bigl\{x\in \R\colon\int_{\bar{x}^n_{i-1}}^x\bar\rho(x) \,{\d}x < \ell_n \Bigr\},
&i \in \{2,\ldots,n-1\}.
\end{cases}
\]
It follows that $\bar{x}^n_1 < \bar{x}^n_2 < \ldots < \bar{x}^n_{n-1}$ and
\begin{align}\label{eq:mass}
&\int_{-\infty}^{\bar{x}^n_1}\bar\rho(x) \,{\d}x = \int_{\bar{x}^n_{i-1}}^{\bar{x}^n_{i}}\bar\rho(x) \,{\d}x = \int_{\bar{x}^n_{n-1}}^{\infty}\bar\rho(x) \,{\d}x = \ell_n\leq (\bar{x}^n_i-\bar{x}^n_{i-1})R,
&i \in \{2,\ldots,n-1\}.
\end{align}
We let the $(n-1)$ particles defined above evolve according to the FTL system
\begin{align}\label{eq:FTL}
 &\begin{cases}
    \dot{x}_i^n(t)=v(R^n_i(t)), & i \in \{1,\ldots,n-2\}, \\
    \displaystyle{\dot{x}_{n-1}^n(t)=v_{\max}},  \\
    x^n_i(0) = \bar{x}^n_i, & i \in \{1,\ldots,n-1\},
  \end{cases}
  &R^n_i(t) \doteq \frac{\ell_n}{x^n_{i+1}(t)-x^n_i(t)}.
\end{align}
\begin{lemma}[Discrete maximum principle {\cite[Lemma~1]{MarcoMaxARMA2015}}]\label{lem:maximum}
Assume \eqref{I1} and \eqref{V1}.
Then, the solution $(x_i^n)_{i=1}^{n-1}$ to \eqref{eq:FTL} satisfies for any $t \ge 0$
  \begin{align*}
  &x^n_{i+1}(t)-x^n_i(t)\geq \ell_n/R,
  &i \in \{1,\ldots, n-2\}.
  \end{align*}
\end{lemma}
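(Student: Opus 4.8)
The plan is to reformulate the claim as a discrete maximum principle for the scaled densities $R^n_i$. Since the assertion $x^n_{i+1}(t)-x^n_i(t)\geq \ell_n/R$ is equivalent to $R^n_i(t)\leq R$, and since $R=\|\bar\rho\|_{\L\infty(\R)}\leq\rho_{\max}$ by \eqref{I1}, it suffices to show that the quantity $M(t)\doteq\max_{1\leq i\leq n-2}R^n_i(t)$ never exceeds its value at $t=0$, which by the last inequality in \eqref{eq:mass} satisfies $M(0)\leq R$.

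The main computation is the following. From \eqref{eq:FTL} one gets $\dot R^n_i=-\ell_n^{-1}(R^n_i)^2(\dot x^n_{i+1}-\dot x^n_i)$, hence $\dot R^n_i=-\ell_n^{-1}(R^n_i)^2\bigl(v(R^n_{i+1})-v(R^n_i)\bigr)$ for $i\leq n-3$ and $\dot R^n_{n-2}=-\ell_n^{-1}(R^n_{n-2})^2\bigl(v_{\max}-v(R^n_{n-2})\bigr)$. The map $M$ is locally Lipschitz, being a finite maximum of $\C1$ functions; at almost every $t$ it is differentiable, and for any index $i_0$ with $R^n_{i_0}(t)=M(t)$ the nonnegative function $M-R^n_{i_0}$ attains a minimum at $t$, so $M'(t)=\dot R^n_{i_0}(t)$. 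If $i_0\leq n-3$ then $R^n_{i_0}\geq R^n_{i_0+1}$, so by monotonicity of $v$ (see \eqref{V1}) $v(R^n_{i_0+1})\geq v(R^n_{i_0})$ and therefore $\dot R^n_{i_0}\leq 0$; if $i_0=n-2$ then $v(R^n_{n-2})\leq v_{\max}$ because $v$ takes values in $[0,v_{\max}]$, so again $\dot R^n_{n-2}\leq 0$. Hence $M'\leq 0$ a.e., and $M(t)\leq M(0)\leq R$ for all $t\geq 0$, which is the claim.

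To make this rigorous I would run the estimate in parallel with a continuation argument. The right-hand side of \eqref{eq:FTL} is $\C1$, hence locally Lipschitz, on the open set of configurations with positive gaps and $R^n_i\le\rho_{\max}$ (if convenient, one first extends $v$ to a nonincreasing $\C1$ function on $\R$ coinciding with $v$ on $[0,\rho_{\max}]$, so as to have a well-defined vector field on the whole cone $\{x_1<\dots<x_{n-1}\}$). Thus there is a unique maximal solution on some $[0,T^*)$, the computation above applies on $[0,T^*)$ and gives $R^n_i\le R\le\rho_{\max}$ there, so all gaps remain $\geq\ell_n/R>0$ and $|\dot x^n_i|\le v_{\max}$; a standard continuation argument then rules out breakdown at a finite $T^*$, so $T^*=\infty$, and since $R^n_i\le\rho_{\max}$ throughout, the (possibly) extended system agrees with \eqref{eq:FTL}. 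I expect this bootstrap between the a priori bound and global existence to be the only genuinely delicate point; the maximum-principle step itself is elementary once one notes that a maximizing gap can only widen — its upstream neighbour is no denser, hence no slower — and that the leader $x^n_{n-1}$, moving at speed $v_{\max}$, can only widen the last gap.
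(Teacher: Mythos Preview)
Your argument is correct. Note, however, that the paper does not actually give its own proof of this lemma here: it simply cites \cite[Lemma~1]{MarcoMaxARMA2015} and moves on, so there is no in-text proof to compare against directly.

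That said, the paper does prove a closely related statement later (the discrete maximum--minimum principle for the IBVP in Section~\ref{Sec:dirichlet}), and the technique used there is different from yours. There the authors argue by contradiction and first-crossing-time: one supposes the bound fails, picks the first time $t_1$ at which a gap touches the critical threshold, and derives a contradiction by integrating the velocity difference on $(t_1,t_2)$. Your approach instead tracks $M(t)=\max_i R^n_i(t)$ directly, uses that $M$ is locally Lipschitz as a finite maximum of $\C1$ functions, and checks $M'\leq 0$ a.e.\ by evaluating $\dot R^n_{i_0}$ at any maximizing index. Both routes are standard for discrete maximum principles; yours is arguably cleaner because it avoids the case analysis on which gap first violates the bound, and it makes the role of the monotonicity of $v$ in \eqref{V1} completely transparent. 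The contradiction argument, on the other hand, requires no differentiability properties of $M$ and is perhaps more elementary in that sense. Your remarks on local well-posedness, the possible $\C1$ extension of $v$ beyond $[0,\rho_{\max}]$, and the bootstrap between the a~priori bound and global continuation are all to the point and would be needed in either approach.
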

The above lemma ensures that the particles strictly preserve their initial order.
Hence the solution $(x_i^n)_{i=1}^{n-1}$ to \eqref{eq:FTL} is well defined.

We introduce two \emph{artificial particles} $x_0^n$ and $x_n^n$ as follows
\begin{align}\label{eq:phantom}
  &x^n_0(t) \doteq 2x^n_{1}(t)-x^n_2(t),
  &x^n_n(t) \doteq 2x^n_{n-1}(t)-x^n_{n-2}(t),
\end{align}
and let $R^n_0 \doteq R^n_1$ and $R^n_{n-1} \doteq R^n_{n-2}$.
We then set
\begin{equation}\label{eq:disdens}
\rho^n(t,x) \doteq \sum_{i=0}^{n-1}R^n_i(t) ~ \mathbf{1}_{[x^n_{i}(t),x_{i+1}^n(t))}(x) = \sum_{i=0}^{n-1} \frac{\ell_n}{x^n_{i+1}(t)-x^n_i(t)} ~ \mathbf{1}_{[x^n_{i}(t),x_{i+1}^n(t))}(x).
\end{equation}
We notice that $\|\rho^n(t,\cdot)\|_{\L1(\R)} = L$, $\|\rho^n(t,\cdot)\|_{\L\infty(\R)} \le R$ and that $\rho^n(t,\cdot)$ is compactly supported for all $t \ge 0$.
For future use we compute
\begin{equation}\label{eq:ODE_density}
  \begin{cases}
  \dot{R}^n_i(t)=-\frac{R^n_i(t)^2}{\ell_n} \, \bigl[v(R^n_{i+1}(t))-v(R^n_i(t))\bigr], &i \in \{1,\ldots,n-3\}, \\[10pt]
  \dot{R}^n_{n-2}(t)=-\frac{R^n_{n-2}(t)^2}{\ell_n} \, \bigl[v_{\max}-v(R^n_{n-2}(t))\bigr].
  \end{cases}
\end{equation}

\begin{remark}\label{rem01}
In case $\supp[\bar\rho]$ is bounded either from above or from below, it is possible to improve the above construction.
In the former case, the particle $x^n_n$ can be set on $\max \{\supp[\bar\rho]\}$ initially and let evolve with maximum speed $v_{\max}$, and the preceding particle $x^n_{n-1}$ let evolve according to $\dot{x}^n_{n-1}(t)=v(\ell_n/(x^n_n(t)-x^n_{n-1}(t)))$.
In the latter case, the particle $x^n_0$ can be set on $\min \{\supp[\bar\rho]\}$ initially and let evolve according to $\dot{x}^n_{0}(t)=v(\ell_n/(x^n_1(t)-x^n_{0}(t)))$.
In \cite{MarcoMaxARMA2015} both these conditions are required for the initial datum and such construction is applied.
\end{remark}

The main result of \cite{MarcoMaxARMA2015,DF_fagioli_rosini_UMI} reads as follows.
\begin{theorem}[{\cite[Theorem~2.3]{DF_fagioli_rosini_UMI}, \cite[Theorem~3]{MarcoMaxARMA2015}}]\label{thm:main_LWR}
Assume \eqref{I1} and \eqref{V1}.
Moreover, assume at least one of the two conditions \eqref{I2} and \eqref{V2}.
Then, $(\rho^n)_n$ converges (up to a subsequence) a.e.\ and in $\Lloc1$ on $\R_+\times \R$ to the unique entropy solution to the Cauchy problem \eqref{eq:cauchy} in the sense of Definition~\ref{def:entro_sol_LWR}.
\end{theorem}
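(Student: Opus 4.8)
The plan is to establish compactness of the sequence $(\rho^n)_n$ via the generalised Aubin--Lions lemma (Theorem~\ref{thm:aubin}), and then to identify every limit point as \emph{the} unique entropy solution to \eqref{eq:cauchy}. Concretely, I would first check the hypotheses of Theorem~\ref{thm:aubin} with $v$ replaced by a strictly monotone auxiliary function; since $v$ itself is strictly decreasing by \eqref{V1}, one may simply take that function to be $v$. For \eqref{Abis}, the uniform $\L\infty$ bound $0\le\rho^n\le R$ from Lemma~\ref{lem:maximum} gives the $\L1$ part of the integrand on $[a,b]$ for free; the total-variation part requires a uniform-in-time bound on $\tv(v(\rho^n(t,\cdot));[a,b])$. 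This is where one of the two extra assumptions \eqref{I2} or \eqref{V2} enters: under \eqref{I2} one propagates the initial discrete total variation using the ODE system \eqref{eq:ODE_density} and a telescoping/sign-tracking argument (the scheme is order-preserving by Lemma~\ref{lem:maximum}, and the discrete quantity $\sum_i |v(R^n_{i+1})-v(R^n_i)|$ is non-increasing in time), while under \eqref{V2} one gets a one-sided Oleinik-type bound on $\partial_x v(\rho^n)$, hence an $\L1_{loc}$ bound on the positive part, which together with the fixed total mass controls the full variation on bounded intervals for positive times. For \eqref{Bbis} I would verify the stronger Lipschitz-in-time estimate \eqref{B} in the scaled Wasserstein distance: the pseudo-inverse $X_{\rho^n}$ is piecewise constant with values $x^n_i(t)$, so $W_{L,1}(\rho^n(t,\cdot),\rho^n(s,\cdot))=\|X_{\rho^n}(t,\cdot)-X_{\rho^n}(s,\cdot)\|_{\L1([0,L])}\le \ell_n\sum_i|x^n_i(t)-x^n_i(s)|\le L\,v_{\max}\,|t-s|$, since each $\dot x^n_i\in[0,v_{\max}]$. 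This yields relative compactness of $(\rho^n)_n$ in $\L1([0,T]\times[a,b])$ for every $T$ and every $[a,b]$, hence a subsequence converging a.e.\ and in $\Lloc1$ on $\R_+\times\R$ to some $\rho$.

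The second half is to show the limit $\rho$ is an entropy solution. I would pass to the limit in a \emph{discrete entropy inequality}: for each $k\ge0$ one derives, from \eqref{eq:FTL}--\eqref{eq:ODE_density}, an approximate Kru\v{z}kov inequality for $\rho^n$ with an error term that is $o(1)$ as $n\to\infty$. The natural route is to test the weak formulation of the conservation law satisfied by $\rho^n$ — note $\rho^n$ does \emph{not} solve \eqref{eq:LWR} exactly, but the Lagrangian structure gives $\partial_t\rho^n+\partial_x(\text{discrete flux})=0$ in a suitable distributional sense, with discrete flux close to $f(\rho^n)$ up to terms controlled by $\ell_n$ and the (uniformly bounded) spatial variation — against $\varphi$ and against the Kru\v{z}kov entropy pair, using the monotonicity of $v$ and the order preservation to get the correct sign. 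The initial condition $\rho^n(0,\cdot)\rightharpoonup\bar\rho$ follows from the construction \eqref{eq:mass}, which ensures $\rho^n(0,\cdot)\to\bar\rho$ in the scaled Wasserstein sense, hence weakly-$*$; combined with the Wasserstein Lipschitz bound \eqref{B} one transfers this to $\rho(t,\cdot)\to\bar\rho$ weakly-$*$ as $t\downarrow0$. Once $\rho$ is shown to be \emph{an} entropy solution in the sense of Definition~\ref{def:entro_sol_LWR}, uniqueness (the theorem of \cite{chen_rascle,Kruzhkov}) forces every convergent subsequence to have the same limit, so the whole sequence converges.

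The main obstacle I anticipate is the uniform total-variation estimate \eqref{Abis}, specifically under the weaker pair of hypotheses: assuming only \eqref{V2} (and not \eqref{I2}) one must produce the $\tv$ control of $v(\rho^n(t,\cdot))$ on $[a,b]$ for a.e.\ $t$ from an Oleinik-type one-sided bound that is uniform in $n$, and one has to be careful that this bound degenerates as $t\downarrow0$ but is still integrable in $t$ over $(0,T)$, matching exactly the form of \eqref{Abis} (which integrates in $t$). Establishing the discrete analogue of the Oleinik estimate — i.e.\ a uniform lower bound on the discrete space-increments of $v(R^n_i)$ of the form $v(R^n_{i+1})-v(R^n_i)\ge -C\,(x^n_{i+1}-x^n_i)/t$ — from the ODEs \eqref{eq:ODE_density} is the technical crux, and is precisely the point where assumption \eqref{V2} (convexity-type condition on $\rho\mapsto\rho\,v'(\rho)$) is used to close a differential inequality for the relevant discrete quantity. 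A secondary delicate point is the a.e.\ convergence of the \emph{artificial particles} and of $R^n_0,R^n_{n-1}$ near the edges of $\supp[\rho^n]$, but this is handled as in Remark~\ref{rem01} and contributes only boundary terms that vanish against compactly supported test functions. All these steps are carried out in detail in \cite{MarcoMaxARMA2015,DF_fagioli_rosini_UMI}; here we only recall the statement and refer to those papers for the proof.
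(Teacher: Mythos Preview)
Your proposal follows essentially the same strategy as the paper's own proof: compactness via the generalised Aubin--Lions lemma (Theorem~\ref{thm:aubin}), fed by either the discrete $\BV$-contraction (Proposition~\ref{pro:bv_contraction}) under \eqref{I2} or the discrete Oleinik inequality (Proposition~\ref{pro:oleinik}) under \eqref{V2}, together with the Wasserstein time-Lipschitz estimate (Proposition~\ref{pro:time_continuity}), followed by passage to the limit in the Kru\v{z}kov inequality and identification of the limit via uniqueness. Two technical slips should be corrected, though neither derails the approach. First, the pseudo-inverse $X_{\rho^n(t,\cdot)}$ of the piecewise-constant density $\rho^n$ is piecewise \emph{affine}, not piecewise constant (see the explicit formula in the proof of Proposition~\ref{pro:time_continuity}); your final bound $W_{L,1}\le L\,v_{\max}\,|t-s|$ survives, but the intermediate identity $\|X_{\rho^n}(t,\cdot)-X_{\rho^n}(s,\cdot)\|_{\L1}=\ell_n\sum_i|x^n_i(t)-x^n_i(s)|$ does not. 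Second, the discrete Oleinik inequality has the \emph{opposite} sign from what you write at the end: the paper proves the upper bound $\bigl[v(R^n_{i+1})-v(R^n_i)\bigr]/(x^n_{i+1}-x^n_i)\le 1/t$, which controls the positive part of $\partial_x v(\rho^n)$ (consistent with your earlier sentence), whereas the lower bound $v(R^n_{i+1})-v(R^n_i)\ge -C\,(x^n_{i+1}-x^n_i)/t$ you state is in fact false in general, since admissible shocks produce arbitrarily large downward jumps in $v$; this is precisely why \eqref{V2} yields an upper, not a lower, one-sided bound.
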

We sketch the proof of Theorem~\ref{thm:main_LWR} in the next two subsections.
For simplicity, we assume that $\bar{\rho}$ is compactly supported and apply the corresponding construction explained in Remark~\ref{rem01}.

\subsection{Estimates}\label{sec:LWRestimates}

The result in Lemma~\ref{lem:maximum} ensures that $\|\rho^n(t,\cdot)\|_{\L\infty(\R)}\leq R \doteq \|\bar\rho\|_{\L\infty(\R)}$ for all $t\geq0$.
As usual in the context of scalar conservation laws, a uniform control of the $\BV$ norm is necessary in order to gain enough compactness of the approximating scheme.
We achieve compactness in two distinct ways. The first one is a uniform $\BV$ contraction property for $(\rho^n)_n$, and it obviously requires \eqref{I2}.

\begin{proposition}\label{pro:bv_contraction}
  Assume \eqref{I1}, \eqref{I2} and \eqref{V1}.
  Then, the discretized density $\rho^n$ defined in \eqref{eq:disdens} satisfies for any $t \ge 0$
  \[\tv[\rho^n(t,\cdot)]\leq \tv[\rho^n(0,\cdot)]\leq \tv [\bar\rho].\]
\end{proposition}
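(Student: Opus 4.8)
The plan is to track the total variation of $\rho^n(t,\cdot)$ through the discrete quantities $R^n_i(t)$. Since $\rho^n(t,\cdot)$ is piecewise constant with values $R^n_i(t)$ on the intervals $[x^n_i(t),x^n_{i+1}(t))$, and recalling the conventions $R^n_0 = R^n_1$ and $R^n_{n-1}=R^n_{n-2}$ from the artificial-particle construction \eqref{eq:phantom}, one has at any time $t$
\[
\tv[\rho^n(t,\cdot)] = \sum_{i=1}^{n-3} \modulo{R^n_{i+1}(t) - R^n_i(t)} + \modulo{R^n_{n-2}(t) - R^n_{n-3}(t)},
\]
so it suffices to show that $t\mapsto \sum_i \modulo{R^n_{i+1}(t)-R^n_i(t)}$ is non-increasing (the jumps at the two outermost artificial particles vanish by definition). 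First I would fix a time $t$ where all the differences $R^n_{i+1}-R^n_i$ are nonzero (so the sign functions are locally constant; the finitely many exceptional times are handled by continuity, as the map is Lipschitz in $t$ by \eqref{eq:ODE_density}) and differentiate, obtaining
\[
\frac{\d}{\d t}\tv[\rho^n(t,\cdot)] = \sum_i \sign\bigl(R^n_{i+1}(t)-R^n_i(t)\bigr)\bigl(\dot R^n_{i+1}(t)-\dot R^n_i(t)\bigr).
\]

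Next I would substitute the evolution equations \eqref{eq:ODE_density}, writing $\dot R^n_i = -\frac{(R^n_i)^2}{\ell_n}\bigl(v(R^n_{i+1})-v(R^n_i)\bigr)$ (with the appropriate reading $v(R^n_{n-1})=v_{\max}$ at the right end, consistent with the convention $R^n_{n-1}=R^n_{n-2}$ only in the TV count, not in the velocity). The key algebraic step is a discrete analogue of the standard computation for $\tv$-contraction of scalar conservation laws: one performs an Abel-type summation by parts to regroup the telescoping structure, so that each term involving the sign of $R^n_{i+1}-R^n_i$ gets paired with differences $v(R^n_{i+1})-v(R^n_i)$, which have the \emph{opposite} sign because $v$ is strictly decreasing by \eqref{V1}. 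The monotonicity of $v$ is precisely what turns the cross terms into nonpositive contributions; this is the discrete manifestation of the fact that $f(\rho)=\rho v(\rho)$ generates a monotone (order-preserving) semigroup, hence $\L1$- and $\tv$-contractive. A careful bookkeeping of the boundary terms at $i=n-2$ (where $v_{\max}$ appears) and at the left end shows they are either zero or of the good sign. Concluding, $\frac{\d}{\d t}\tv[\rho^n(t,\cdot)]\le 0$ for a.e.\ $t$, and since the map is absolutely continuous (indeed Lipschitz) in $t$, integrating gives $\tv[\rho^n(t,\cdot)]\le \tv[\rho^n(0,\cdot)]$. Finally $\tv[\rho^n(0,\cdot)]\le\tv[\bar\rho]$ follows because $\rho^n(0,\cdot)$ is built by averaging $\bar\rho$ over the mass-intervals $[\bar x^n_{i},\bar x^n_{i+1})$ (see \eqref{eq:mass}), and replacing a $\BV$ function by its local averages on a partition does not increase the total variation — a standard Jensen/averaging argument.

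The main obstacle I anticipate is the discrete summation-by-parts step: unlike the continuum case, the "numerical viscosity" structure here is nonlinear in the $R^n_i$ (the factor $(R^n_i)^2/\ell_n$) and the stencil couples $R^n_i$ both to $R^n_{i+1}$ (through its own equation) and to $R^n_{i-1}$ (through the equation for $R^n_{i-1}$), so the cancellations are not immediate and one must carefully collect, for each fixed $i$, the three contributions in which $\sign(R^n_{i+1}-R^n_i)$ appears and verify their sum is $\le \frac{(R^n_i)^2 + (R^n_{i+1})^2}{\ell_n}\cdot(\text{something}\le 0)$ or can be absorbed. Equivalently, one may prefer to argue that the scheme \eqref{eq:FTL}–\eqref{eq:ODE_density} is a \emph{monotone} scheme in the variables $y^n_i \doteq x^n_{i+1}-x^n_i$ (the gaps), deduce an $\L1$-contraction between two solutions starting from ordered/perturbed data, and then obtain the $\tv$ bound by comparing the solution with its spatial translate — this is the route that most cleanly mirrors the PDE proof and sidesteps the explicit sign chase. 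I would present whichever of these two is shorter, but flag the monotonicity-of-the-gap-dynamics as the conceptual heart of the estimate.
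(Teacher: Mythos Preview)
Your approach is essentially the paper's: differentiate $\tv[\rho^n(t,\cdot)]$, regroup by Abel summation so that each $\dot R^n_i$ carries the coefficient $\sign(R^n_i-R^n_{i+1})-\sign(R^n_{i-1}-R^n_i)$, and then observe that since $v$ is strictly decreasing one has $\sign(\dot R^n_i)=\sign(R^n_{i+1}-R^n_i)$, which makes every regrouped interior term nonpositive (it vanishes when $R^n_{i-1},R^n_i,R^n_{i+1}$ are monotone and equals $-2|\dot R^n_i|$ at a local extremum). The ``obstacle'' you anticipate is not one: the positive prefactor $(R^n_i)^2/\ell_n$ is irrelevant to the sign analysis, and no further cancellation or monotone-scheme detour is needed --- the paper's proof is three lines.

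One genuine slip: your formula for $\tv[\rho^n(t,\cdot)]$ omits the jumps from $0$ to $R^n_0(=R^n_1)$ at the left edge of the support and from $R^n_{n-1}(=R^n_{n-2})$ to $0$ at the right edge. These contribute $R^n_1+R^n_{n-2}$ to the total variation and are precisely what produce the boundary coefficients $[1+\sign(R^n_1-R^n_2)]\,\dot R^n_1$ and $[1-\sign(R^n_{n-3}-R^n_{n-2})]\,\dot R^n_{n-2}$ in the paper's computation; both are $\le 0$ (the second because $\dot R^n_{n-2}=-\frac{(R^n_{n-2})^2}{\ell_n}[v_{\max}-v(R^n_{n-2})]\le 0$ unconditionally). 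Without the endpoint contribution the right-boundary term $\sign(R^n_{n-2}-R^n_{n-3})\,\dot R^n_{n-2}$ can have the wrong sign, so this omission is not merely cosmetic.
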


\begin{proof}
By \eqref{eq:ODE_density} and \eqref{V1} we have that
\begin{align*}
  \frac{\d}{{\d}t}\tv[\rho^n(t,\cdot)]
  =&~ \bigl[ 1+\sign\bigl(R_1(t)-R_2(t)\bigr) \bigr] \dot{R}_1(t)
  + \bigl[ 1-\sign\bigl(R_{n-3}(t)-R_{n-2}(t)\bigr) \bigr] \dot{R}_{n-2}(t)\\
  &~ + \sum_{i=2}^{n-3} \, \bigl[ \sign\bigl(R_i(t)-R_{i+1}(t)\bigr) - \sign\bigl(R_{i-1}(t)-R_i(t)\bigr) \bigr] \dot{R}_i(t)
\end{align*}
is not positive.
Finally, the estimate $\tv[\rho^n(0,\cdot)]\leq \tv [\bar\rho]$ is a simple exercise.
\end{proof}

The second way to achieve compactness is via the following \emph{discrete Oleinik-type inequality}. Here we require \eqref{V2} in place of \eqref{I2}.

\begin{proposition}[{\cite[Proposition~3.2]{DF_fagioli_rosini_UMI}}]\label{pro:oleinik}
  Assume \eqref{I1}, \eqref{V1} and \eqref{V2}.
  Then, $(x_i^n)_{i=0}^{n}$ satisfies for any $t > 0$
  \begin{align}\label{eq:oleinik}
    &\frac{\dot{x}^n_{i+1}(t)-\dot{x}^n_i(t)}{x^n_{i+1}(t)-x^n_i(t)}\leq \frac{1}{t},
    &i\in\{0,\ldots,n-1\}.
  \end{align}
\end{proposition}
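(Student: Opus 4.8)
The plan is to study the evolution of the discrete quantity $w_i^n(t) \doteq \dfrac{\dot{x}^n_{i+1}(t)-\dot{x}^n_i(t)}{x^n_{i+1}(t)-x^n_i(t)}$, or rather of a closely related quantity involving the gaps $y_i^n(t) \doteq x^n_{i+1}(t) - x^n_i(t)$, and to show that the maximum over $i$ of an appropriate version of $w_i^n$ satisfies a Riccati-type differential inequality $\frac{\d}{\d t}(\max_i w_i^n) \le -(\max_i w_i^n)^2$ (in the sense of upper Dini derivatives), which by comparison with the solution $1/t$ of $\dot u = -u^2$ yields \eqref{eq:oleinik}. First I would fix the indices $1 \le i \le n-3$ (where all neighbours evolve by the genuine FTL law) and, writing $y_i = x^n_{i+1}-x^n_i$ and $R_i^n = \ell_n/y_i$, differentiate: $\dot y_i = v(R_{i+1}^n) - v(R_i^n)$, so that $w_i^n = \dot y_i / y_i = [v(R_{i+1}^n)-v(R_i^n)]/y_i$. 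Then I would compute $\frac{\d}{\d t} w_i^n$ using \eqref{eq:ODE_density} for $\dot R_i^n$, $\dot R_{i+1}^n$ and $\dot y_i = y_i w_i^n$; the key algebraic point is that \eqref{V2}, i.e.\ the monotonicity of $\rho \mapsto \rho\,v'(\rho)$, forces the sign of a certain combination of $v'$-terms so that at an index $i$ realising the maximum of $w^n$ (where $w_{i-1}^n \le w_i^n \ge w_{i+1}^n$), the cross terms have the right sign and one is left with $\frac{\d}{\d t} w_i^n \le -(w_i^n)^2$ plus terms that are nonpositive by the maximality.

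More concretely, I expect the computation to reduce, after using $v(R_{i+1}^n) - v(R_i^n) = y_i w_i^n$ and the chain rule, to something of the shape
\[
\frac{\d}{\d t} w_i^n = -(w_i^n)^2 + \frac{v'(R_{i+1}^n) R_{i+1}^n}{y_{i+1}} \,(w_{i+1}^n - w_i^n) + (\text{analogous term with } i-1, w_{i-1}^n),
\]
where the precise placement of the $\rho v'(\rho)$ factors is dictated by \eqref{eq:ODE_density}. At a maximum index the bracketed differences $w_{i\pm1}^n - w_i^n$ are $\le 0$, and $v' < 0$ by \eqref{V1}, so both correction terms are $\le 0$ provided the coefficients have the correct sign — this is exactly where \eqref{V2} is used (the monotonicity of $\rho v'(\rho)$ combined with the discrete maximum principle from Lemma~\ref{lem:maximum}, which keeps $R_i^n \in [\text{something}, R]$, controls the sign). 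Hence $D^+\bigl(\max_i w_i^n\bigr)(t) \le -\bigl(\max_i w_i^n(t)\bigr)^2$ for $t>0$.

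The boundary indices need separate but easy treatment: for $i = n-2$ the leader $x^n_{n-1}$ moves at $v_{\max}$ (constant), so $\dot y_{n-2} = v_{\max} - v(R_{n-2}^n) \ge 0$ and the term $\dot R_{n-1}^n$ is absent, which only helps the inequality; for the artificial particles $x_0^n$ and $x_n^n$ defined in \eqref{eq:phantom} one has $y_0 = y_1$ and $y_{n-1} = y_{n-2}$ by construction, so the relevant ratios coincide with interior ones and no new case arises — alternatively one works directly with the simpler construction of Remark~\ref{rem01}, where $x_n^n$ sits on $\max\supp[\bar\rho]$ and moves at $v_{\max}$. Finally, from $D^+ M(t) \le -M(t)^2$ with $M(t) \doteq \max_i w_i^n(t)$, a standard ODE comparison argument gives $M(t) \le 1/t$ for all $t > 0$ regardless of the (possibly very large, even infinite) initial value $M(0^+)$, which is precisely \eqref{eq:oleinik}.

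The main obstacle I anticipate is purely bookkeeping: correctly differentiating $w_i^n$ through the nonlinear dependence $R_i^n = \ell_n/y_i$ and $\dot R_i^n$ from \eqref{eq:ODE_density}, and then identifying which grouping of the $v'(R^n_\bullet)\,R^n_\bullet$ factors lets assumption \eqref{V2} kill the sign of the off-diagonal terms at the maximising index. Everything else — the one-sided maximum-of-finitely-many-functions differentiation, the Riccati comparison, the boundary indices — is routine. One subtlety worth a remark is that $\max_i w_i^n$ may blow up as $t \downarrow 0$ if the initial datum has increasing jumps, so the comparison must be run starting from an arbitrarily small time $t_0 > 0$ and then $t_0$ sent to $0$.
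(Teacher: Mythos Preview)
Your approach is correct and works, but it is genuinely different from the paper's. The paper does not take a maximum over $i$ and run a Riccati comparison on $M(t)=\max_i w_i^n(t)$. Instead it rewrites \eqref{eq:oleinik} as $z_i(t)\doteq t\,R_i^n(t)\,[\dot x_{i+1}^n(t)-\dot x_i^n(t)]\le \ell_n$ and proves this by \emph{backward induction} on $i$: first it shows $z_{n-2}\le \ell_n$ directly from the ODE satisfied by $z_{n-2}$ (here the leader moves at $v_{\max}$, so the inequality $\dot z_{n-2}\le R_{n-2}^n[v_{\max}-v(R_{n-2}^n)](1-z_{n-2}/\ell_n)$ and a comparison argument give the bound), and then it shows that $z_{i+1}\le \ell_n$ implies $z_i\le \ell_n$ by deriving a similar differential inequality for $(z_i)_+$, using \eqref{V2} exactly once to control the sign of the $-v'(R_i^n)R_i^n$ term. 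The one-sided FTL structure (each particle looks only at its leader) is what makes the backward induction close up neatly.

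Two remarks on your sketch. First, when you actually carry out the differentiation you will find there is \emph{no} $i-1$ term: writing $y_i=x_{i+1}^n-x_i^n$ and $R_j^n=\ell_n/y_j$, one gets
\[
\dot w_i^n=-\bigl(w_i^n\bigr)^2-\frac{v'(R_{i+1}^n)R_{i+1}^n}{y_i}\,(w_{i+1}^n-w_i^n)+\frac{v'(R_i^n)R_i^n-v'(R_{i+1}^n)R_{i+1}^n}{y_i}\,w_i^n,
\]
so at the maximising index only $w_{i+1}^n\le w_i^n$ is needed, and the last term is $\le 0$ by \eqref{V2} together with $w_i^n>0\Rightarrow R_{i+1}^n<R_i^n$. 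Second, the blow-up at $t\downarrow 0$ is a non-issue in your formulation: from $D^+M\le -M^2$ one gets $D^+(1/M)\ge 1$ on $\{M>0\}$, hence $1/M(t)\ge t$ without any reference to $M(0^+)$. Your argument is thus slightly simpler than you anticipated, and closer in spirit to the classical continuous Oleinik estimate; the paper's induction, on the other hand, makes the role of the follow-the-leader hierarchy more transparent.
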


\begin{proof}
\eqref{eq:oleinik} is equivalent to
\begin{align*}
&z_i(t) \doteq t \, R_i(t) \Bigl[\dot{x}_{i+1}(t)-\dot{x}_i(t)\Bigr] \leq \ell_n&
&\text{for all } t>0,
&i\in\{1,\ldots,n-2\}.
\end{align*}
We prove the above estimate inductively on $i$ by using \eqref{eq:ODE_density}.
Since $z_{n-2}(0)=0$ and
\[
\dot{z}_{n-2} \le R_{n-2} \, \bigl[ v_{\max}-v(R_{n-2}) \bigr] \left[1-\frac{z_{n-2}}{\ell_n}\right],
\]
a simple comparison argument shows that $z_{n-1}(t) \ell_n$ for all $t\ge0$, see \cite[Proposition 3.2]{DF_fagioli_rosini_UMI}.
Next we prove that if $z_{i+1}(t)\leq \ell_n$ for all $t\geq 0$ and for some $i\in\{1,\ldots,n-2\}$, then $z_i(t) = t \, R_i(t) \, [ v(R_{i+1}(t)) - v(R_i(t)) ] \leq \ell_n$ for all $t\geq 0$.
Observe that $\sign_+(z_i) = \sign_+(v(R_{i+1})-v(R_i))= \sign_+(R_i-R_{i+1})$ for all $i\in\{1,\ldots,n-3\}$, where $(z)_+ \doteq \max\{z,0\}$.
The inequality $z_{i+1}\leq \ell_n$ and \eqref{V2} imply
\[
  \frac{\d}{{\d}t}(z_i)_+ \leq R_i \left[\vphantom{\frac{(z_i)_+}{\ell_n}} \, \bigl(v(R_{i+1})-v(R_i)\Bigr)_+ -v'(R_i) \, R_i\right] \left(1-\frac{(z_i)_+}{\ell_n}\right).
\]
We observe that the term in the squared bracket in the above estimate is nonnegative.
Therefore, again a comparison argument shows that $z_i(t)\leq \ell_n$ for all $t\geq 0$.
\end{proof}
\begin{remark}
We point out that for $i\in\{1,\ldots,n-2\}$ the estimate \eqref{eq:oleinik} reads
\[\frac{v(R^n_{i+1}(t))-v(R^n_{i}(t))}{x^n_{i+1}(t)-x^n_i(t)}\leq \frac{1}{t},\]
which recalls the one-sided Lipschitz condition in \cite{oleinik, Hoff83}, which characterises entropy solutions to \eqref{eq:LWR}.
\end{remark}
\begin{remark}
The result in Proposition~\ref{pro:oleinik} implies a uniform bound for $(\rho^n)_n$ in $\BVloc(\R_+\times \R)$. In this sense, the $\L\infty\rightarrow\BV$ smoothing effect featured by genuinely nonlinear scalar conservation laws is intrinsically encoded in the particle scheme \eqref{eq:FTL}. We omit the details of the proof, and refer to \cite[Proposition 3.3]{DF_fagioli_rosini_UMI}.
\end{remark}

We prove now \eqref{B}, namely we provide a uniform time continuity estimate in the scaled $1$-Wasserstein distance $W_{L,1}$ defined in~\eqref{eq:wass_equiv0}, which ensures strong $\L1$ compactness with respect to both space and time.

\begin{proposition}\label{pro:time_continuity}
Assume \eqref{I1} and \eqref{V1}.
Then the sequence $(\rho^{n})_{n}$ satisfies \eqref{B}.
\end{proposition}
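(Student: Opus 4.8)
The plan is to estimate $W_{L,1}(\rho^n(t,\cdot),\rho^n(s,\cdot))$ directly through the pseudo-inverse variables. By the very construction of the scheme, the pseudo-inverse $X_{\rho^n(t,\cdot)}$ is the piecewise-constant function on $[0,L]$ taking value $x^n_i(t)$ on the $i$-th mass interval $[(i-1)\ell_n, i\ell_n)$, $i\in\{1,\dots,n-1\}$ (with the outer particles $x_0^n$, $x_n^n$ from \eqref{eq:phantom} handling the extremal mass intervals under the compact-support construction of Remark~\ref{rem01}). Hence, by definition \eqref{eq:wass_equiv0},
\[
W_{L,1}(\rho^n(t,\cdot),\rho^n(s,\cdot)) = \norma{X_{\rho^n(t,\cdot)}-X_{\rho^n(s,\cdot)}}_{\L1([0,L];\R)} = \ell_n \sum_{i} \modulo{x^n_i(t)-x^n_i(s)}.
\]
So everything reduces to a uniform-in-$n$ bound on $\ell_n \sum_i \modulo{x^n_i(t)-x^n_i(s)}$, and since $x^n_i$ is Lipschitz in time it suffices to bound $\ell_n \sum_i \modulo{\dot{x}^n_i(t)}$ uniformly in $n$ and $t$.

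The key step is then almost immediate from the structure of the FTL system \eqref{eq:FTL}: each velocity is $\dot{x}^n_i(t)=v(R^n_i(t))$ (or $v_{\max}$ for the leader), and by Lemma~\ref{lem:maximum} together with \eqref{V1} we have $0 \le v(R^n_i(t)) \le v_{\max}$ for every $i$ and $t$. Therefore
\[
\ell_n \sum_{i} \modulo{\dot{x}^n_i(t)} \le \ell_n \, (n-1) \, v_{\max} \le L \, v_{\max},
\]
which is independent of $n$ and $t$. Integrating in time gives $\ell_n \sum_i \modulo{x^n_i(t)-x^n_i(s)} \le L\,v_{\max}\,\modulo{t-s}$, hence $W_{L,1}(\rho^n(t,\cdot),\rho^n(s,\cdot)) \le L\,v_{\max}\,\modulo{t-s}$ for all $s,t\in(0,T)$, which is exactly \eqref{B} with $C \doteq L\,v_{\max}$ (a constant independent of $n$). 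This in turn implies \eqref{Bbis}, as already remarked after Theorem~\ref{thm:aubin}.

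I do not expect a genuine obstacle here; the only points requiring a little care are bookkeeping ones. First, one must check that the identification of $X_{\rho^n(t,\cdot)}$ with the step function built from the $x^n_i(t)$ is correct including the two artificial particles and the extremal mass intervals, so that the $\L1$ norm of the difference of pseudo-inverses genuinely collapses to $\ell_n\sum_i\modulo{x^n_i(t)-x^n_i(s)}$; this is routine given \eqref{eq:mass} and \eqref{eq:phantom}. Second, one should note that the bound uses only \eqref{I1} and \eqref{V1} — no $\BV$ or convexity assumption — consistently with the statement of Proposition~\ref{pro:time_continuity}. The absolute continuity in time of each $x^n_i$ (so that $x^n_i(t)-x^n_i(s)=\int_s^t \dot{x}^n_i(r)\,\d r$) follows from Lemma~\ref{lem:maximum}, which keeps the arguments $R^n_i$ of $v$ in the compact set $[\,0,R\,]$ where $v$ is Lipschitz, so the ODE solutions are $\C1$ in time on $[0,\infty)$.
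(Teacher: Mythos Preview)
Your overall strategy is the same as the paper's, and the final estimate is morally right, but there is a concrete error in your identification of the pseudo-inverse. Since $\rho^n(t,\cdot)$ is a piecewise \emph{constant density} (not a sum of Dirac masses), its cumulative distribution $F$ is piecewise \emph{affine}, and hence so is $X_{\rho^n(t,\cdot)}$. Explicitly, for $z\in[i\ell_n,(i+1)\ell_n)$ one has
\[
X_{\rho^n(t,\cdot)}(z)=x^n_i(t)+(z-i\ell_n)\,R^n_i(t)^{-1}
=\Bigl(1-\tfrac{z-i\ell_n}{\ell_n}\Bigr)x^n_i(t)+\tfrac{z-i\ell_n}{\ell_n}\,x^n_{i+1}(t),
\]
i.e.\ the affine interpolation between consecutive particles, \emph{not} the step function $z\mapsto x^n_i(t)$. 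Consequently the identity $W_{L,1}(\rho^n(t,\cdot),\rho^n(s,\cdot))=\ell_n\sum_i|x^n_i(t)-x^n_i(s)|$ is false as written.

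The fix is immediate and keeps your argument intact: since $X_{\rho^n(t,\cdot)}(z)$ is a convex combination of $x^n_i(t)$ and $x^n_{i+1}(t)$, the difference $X_{\rho^n(t,\cdot)}(z)-X_{\rho^n(s,\cdot)}(z)$ is the same convex combination of $x^n_i(t)-x^n_i(s)$ and $x^n_{i+1}(t)-x^n_{i+1}(s)$, whence $|X_{\rho^n(t,\cdot)}(z)-X_{\rho^n(s,\cdot)}(z)|\le v_{\max}|t-s|$ for every $z$, and integrating over $[0,L]$ gives $W_{L,1}\le L\,v_{\max}\,|t-s|$. This is exactly the route the paper takes (they write the same formula for $X_{\rho^n}$ and then bound the two pieces separately using \eqref{eq:ODE_density}); your version, once corrected, is in fact slightly cleaner because the convex-combination observation avoids differentiating $R^n_i(t)^{-1}$.
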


\begin{proof}
By \eqref{eq:disdens} and \eqref{eq:pseudoinverse} we have that
\[
X_{\rho^n(t,\cdot)}(z)
= \sum_{i=0}^{n-1} \, \Bigl[x^n_i(t) + \left(z-i\,\ell_n\right) R^n_i(t)^{-1}\Bigr] ~ \mathbf{1}_{[i\ell_n,(i+1) \, \ell_n)}(z).
\]
For any $0<s<t$, by \eqref{eq:wass_equiv0},  \eqref{eq:ODE_density} and \eqref{eq:phantom}
\[
  W_{L,1}(\rho^n(t,\cdot),\rho^n(s,\cdot))
  \le
  v_{\max} \, |t-s|
  +\sum_{i=1}^{n-2} \frac{\ell_n}{2} \int_{s}^{t} |v(R^n_{i+1}(\tau))-v(R^n_i(\tau))| ~{\d}\tau
  +\frac{\ell_n}{2} \int_{s}^{t} |v_{\max} - v(R^n_{n-2}(\tau))| ~{\d}\tau
  \leq v_{\max} \, |t-s|,
\]
and this concludes the proof.
\end{proof}

\subsection{Convergence to entropy solutions}

If besides \eqref{I1} and \eqref{V1} we assume either \eqref{I2} or \eqref{V2}, then the propositions~\ref{pro:bv_contraction} and \ref{pro:oleinik} show that $(\rho^n)_n$ satisfies \eqref{Abis} of Theorem~\ref{thm:aubin} on every time interval $[\delta,T]$ with $0<\delta<T$.
Proposition~\ref{pro:time_continuity} then implies that $(\rho^n)_n$ satisfies \eqref{B}, hence also \eqref{Bbis} of Theorem~\ref{thm:aubin}.
Thus, by a simple diagonal argument stretching the time interval $[\delta,T]$ to $(0,T]$, we get that $(\rho^n)_n$ converges (up to a subsequence) a.e.\ and in $\Lloc1$ on $(0,T)\times \R$.
Let $\rho$ be such limit.
\begin{enumerate}[label={\textbf{Step~\arabic*}},itemindent=*,leftmargin=0pt]\setlength{\itemsep}{0cm}

\item
\textbf{: $\boldsymbol\rho$ is a weak solution to \eqref{eq:cauchy}.}
Let $\varphi\in \Cc\infty(\R_+\times \R)$.
By \eqref{eq:disdens} we compute
\begin{align*}
  &~ \iint_{\R_+\times\R} \, \Bigl[ \rho^n(t,x) \, \varphi_t(t,x) + f(\rho^n(t,x)) \, \varphi_x(t,x) \Bigr] {\d}x ~{\d}t + \int_{\R} \rho^n(0,x) \, \varphi(0,x) \,{\d}x\\
  =&~ \sum_{i=0}^{n-1}\int_{\R_+} \, \Biggl[ -\dot{R}^n_i(t)\left(\int_{x^n_i(t)}^{x^n_{i+1}(t)} \varphi(t,x) \,{\d}x\right)
  + R^n_i(t) \Bigl[ \dot{x}_i^n(t)-v(R^n_i(t)) \Bigr] \varphi(t,x^n_i(t))
  \\
  &~ \hphantom{\sum_{i=0}^{n-1}\int_{\R_+} \, \Biggl[}
  - \frac{R^n_i(t)^2}{\ell_n} \, \Bigl[ \dot{x}^n_{i+1}(t)-v(R^n_i(t)) \Bigr] \Biggl[\int_{x^n_i(t)}^{x^n_{i+1}(t)} \varphi(t,x^n_{i+1}(t)) \,{\d}x\Biggr]
  \Biggr]{\d}t.
\end{align*}

Assuming that $\supp[\varphi]\subset [\delta,T]\times \R$ for some $0<\delta<T$, we obtain
\begin{align*}
\Biggl|\iint_{\R_+\times\R} \, \bigl[\rho^n(t,x) \, \varphi_t(t,x) + f(\rho^n(t,x)) \, \varphi_x(t,x)\bigr] {\d}x ~{\d}t \Biggr|
\leq
\frac{T \, \lip[\varphi] \, \ell_n}{2}
\left[v_{\max}+\sup_{t\in [\delta,T]} \tv\bigl(v(\rho^n(t,\cdot));\,J(T)\bigr)\right],
\label{eq:weak_compact}\tag{$\spadesuit$}
\end{align*}
where $J(T) \doteq \bigl[\min\{\supp[\bar\rho]\} + v(R) \, T , \max\{\supp[\bar\rho]\} + v_{\max} \, T \bigr]$.
Hence, by Proposition~\ref{pro:bv_contraction} the right hand side in \eqref{eq:weak_compact} tends to zero as $n\rightarrow \infty$, and since $\rho^n$ tends (up to a subsequence) to $\rho$ a.e., we have that $\rho$ is a weak solution to the Cauchy problem \eqref{eq:cauchy} for positive times.
By \eqref{eq:mass} and the definition of $R^n_i$ we have that
\begin{align*}
&~\modulo{\int_{\R} \, \Bigl[\bar{\rho}(x) - \rho^n(0,x)\Bigr] \varphi(0,x) \,{\d}x}
\le
2\ell_n \, \|\varphi(0,\cdot)\|_{\L\infty(\R)}
+\sum_{i=0}^{n-1} \modulo{\int_{\bar{x}^n_i}^{\bar{x}^n_{i+1}} \bar{\rho}(x) \left[ \varphi(0,x) - \fint_{\bar{x}^n_i}^{\bar{x}^n_{i+1}} \varphi(0,y) \,{\d}y
\right] {\d}x}
\end{align*}
and clearly the above quantity goes to zero as $n \to + \infty$.

\item\textbf{: $\boldsymbol\rho$ is an entropy solution to \eqref{eq:cauchy}.}
Let $\varphi\in \Cc\infty(\R_+ \times \R)$ with $\varphi\geq 0$ and $k\geq 0$.
By \eqref{eq:disdens}
\begin{align*}
&~\iint_{\R_+\times\R} \, \biggl[|\rho(t,x)-k| \, \varphi_t(t,x) + \sign(\rho(t,x) - k) \Bigl[ f(\rho(t,x)) - f(k)\Bigr] \varphi_x(t,x)\biggr] {\d}x ~{\d}t
\\
=&~
k \int_{\R_+} \, \Bigl[
\bigl[ v(k) - \dot{x}_0^n(t) \bigr] \varphi(t,x_0^n(t))
-
\bigl[ v(k) - \dot{x}_n^n(t) \bigr] \varphi(t,x_n^n(t))
\Bigr] {\d}t
\\&~
+\sum_{i=0}^{n-1} \int_{\R_+}
\sign(R^n_i(t) - k)
\Biggl[ -\dot{R}^n_i(t) \, \left( \int_{x^n_{i}(t)}^{x_{i+1}^n(t)} \varphi(t,x) \,{\d}x \right)
- \Bigl[ R^n_i(t) \bigl[ \dot{x}_{i+1}^n(t) - v(R^n_i(t)) \bigr] - k \bigl[ \dot{x}_{i+1}^n(t) -  v(k) \bigr] \Bigr] \varphi(t,x_{i+1}^n(t))
\\
&~\hphantom{+\sum_{i=0}^{n-1} \int_{\R_+}\Biggl[}
+ \Bigl[R^n_i(t) \bigl[ \dot{x}_{i}^n(t) - v(R^n_i(t)) \bigr] - k \bigl[ \dot{x}_{i}^n(t) - v(k) \bigr] \Bigr]  \varphi(t,x_{i}^n(t)) \Biggr] {\d}t.
\end{align*}

Now we use the equations \eqref{eq:ODE_density} and \eqref{eq:FTL} as follows.
Assuming that $\supp[\varphi]\subset [\delta,T]\times \R$ for some $0<\delta<T$, we obtain
\begin{align*}
&\hphantom{=}\iint_{\R_+\times\R} \, \Biggl[|\rho(t,x)-k| \, \varphi_t(t,x) + \sign(\rho(t,x) - k) \Bigl[ f(\rho(t,x)) - f(k)\Bigr] \varphi_x(t,x)\Biggr] {\d}x ~{\d}t
\\
&=
k \int_{\R_+} \, \Bigl[
\bigl[ v(k) - v(R^n_0(t)) \bigr] \varphi(t,x_0^n(t))
-
\bigl[ v(k) - v_{\max} \, \bigr] \varphi(t,x_n^n(t))
\Bigr] {\d}t
\\
&\hphantom{=}
+\sum_{i=0}^{n-2} \int_{\R_+}
\sign(R^n_i(t) - k)
\Biggl[ \frac{R^n_i(t)^2}{\ell_n} \Bigl[v(R^n_{i+1}(t))-v(R^n_i(t))\Bigr] \Biggl[ \int_{x^n_{i}(t)}^{x_{i+1}^n(t)} \bigl[ \varphi(t,x) - \varphi(t,x_{i+1}^n(t)) \bigr]{\d}x \Biggr]
\\
&\hphantom{+\int_{\R_+}}
+ k \Bigl[ \bigl[ v(R_{i+1}^n(t)) - v(k) \bigr] \varphi(t,x_{i+1}^n(t))
- \bigl[ v(R_{i}^n(t)) - v(k) \bigr]  \varphi(t,x_{i}^n(t)) \Bigr] \Biggr] {\d}t
\\
&\hphantom{=}
+\int_{\R_+}
\sign(R^n_{n-1}(t) - k)
\Biggl[ \frac{R^n_{n-1}(t)^2}{\ell_n} \, \Bigl[v_{\max}-v(R^n_{n-1}(t))\Bigr] \Biggl[ \int_{x^n_{n-1}(t)}^{x_{n}^n(t)} \, \bigl[ \varphi(t,x) - \varphi(t,x_{n}^n(t)) \bigr] {\d}x \Biggr]
\\&\hphantom{+\int_{\R_+}}
+k \Bigl[ \bigl[v_{\max} - v(k) \bigr] \varphi(t,x_{n}^n(t))
- \bigl[ v(R_{n-1}^n(t)) - v(k) \bigr] \varphi(t,x_{n-1}^n(t)) \Bigr] \Biggr] {\d}t.
\end{align*}
We already proved, see \eqref{eq:weak_compact}, that
\begin{align*}
&
\sum_{i=0}^{n-2} \int_{\R_+}
\sign(R^n_i(t) - k) \,
\frac{R^n_i(t)^2}{\ell_n} \, \Bigl[v(R^n_{i+1}(t))-v(R^n_i(t))\Bigr]
\Biggl[ \int_{x^n_{i}(t)}^{x_{i+1}^n(t)} \, \bigl[ \varphi(t,x) - \varphi(t,x_{i+1}^n(t)) \bigr]{\d}x \Biggr] {\d}t
\\&
+\int_{\R_+}
\sign(R^n_{n-1}(t) - k)
\frac{R^n_{n-1}(t)^2}{\ell_n} \, \Bigl[v_{\max}-v(R^n_{n-1}(t))\Bigr]
\Biggl[ \int_{x^n_{n-1}(t)}^{x_{n}^n(t)} \, \bigl[ \varphi(t,x) - \varphi(t,x_{n}^n(t)) \bigr] {\d}x \Biggr] {\d}t
\end{align*}
converges to zero as $n\to\infty$.
Hence, to conclude it suffices to observe that
\begin{align*}
&~ k \Biggl[
\bigl[ v(k) - v(R^n_0(t)) \bigr] \varphi(t,x_0^n(t))
-
\bigl[ v(k) - v_{\max} \, \bigr] \varphi(t,x_n^n(t))
\\
&~\hphantom{k \Biggl[}
+\sum_{i=0}^{n-2}
\sign(R^n_i(t) - k)
\Bigl[ \bigl[ v(R_{i+1}^n(t)) - v(k) \bigr] \varphi(t,x_{i+1}^n(t))
- \bigl[ v(R_{i}^n(t)) - v(k) \bigr]  \varphi(t,x_{i}^n(t)) \Bigr]
\\
&~\hphantom{k \Biggl[}
+\sign(R^n_{n-1}(t) - k)
\Bigl[ \bigl[v_{\max} - v(k) \bigr] \varphi(t,x_{n}^n(t))
- \bigl[ v(R_{n-1}^n(t)) - v(k) \bigr] \varphi(t,x_{n-1}^n(t)) \Bigr] \Biggr]
\\
=&~
k \Biggl[ \sum_{i=1}^{n-1}
\bigl[ \sign(R^n_{i-1}(t) - k) - \sign(R^n_i(t) - k) \bigr] \bigl[ v(R_{i}^n(t)) - v(k) \bigr] \varphi(t,x_{i}^n(t))
\\
&~\hphantom{k \Biggl[}
+ \bigl[ 1 +\sign(R^n_0(t) - k) \bigr] \bigl[ v(k) - v(R^n_0(t)) \bigr] \varphi(t,x_{0}^n(t))
+\bigl[1+\sign(R^n_{n-1}(t) - k)\bigr] \bigl[v_{\max} - v(k) \bigr] \varphi(t,x_{n}^n(t)) \Biggr]
\ge 0.
\end{align*}
\end{enumerate}

\section{The LWR model with Dirichlet boundary conditions}\label{Sec:dirichlet}

In this section we tackle a new problem in the context of the FTL approximation for traffic flow models, namely the approximation of the IBVP with time-varying Dirichlet boundary conditions
\begin{equation}\label{eq:ibvp}
\begin{cases}
\rho_t+[\rho \, v(\rho)]_x=0,& (t,x) \in \R_+ \times \Omega,
\\
\rho(0,x) = \bar{\rho}(x),& x \in \Omega,
\\
\rho(t,0) = \bar{\rho}_0(t),& t\in \R_+,
\\
\rho(t,1) = \bar{\rho}_1(t),& t\in \R_+,
\end{cases}
\end{equation}
where, for notational simplicity, we let $\Omega \doteq (0,1)$.
We assume that the velocity map satisfies \eqref{V1}; further we assume that there exists $\delta>0$ such that the initial datum and the boundary data satisfy respectively
\begin{gather}\tag{I3}\label{I3}
\bar\rho \in \L\infty \cap \BV (\Omega;[\delta,\rho_{\max}]),
\\\tag{B}\label{B1}
\bar{\rho}_0, \bar{\rho}_1 \in \L\infty \cap \Lip  \cap \BV(\R_+;[\delta,\rho_{\max}]).
\end{gather}

We adapt the definition of entropy solution given in \cite[Definition 2.1]{ColomboRosiniboundary}, see also \cite{AmadoriColomboboundary1997, dubois_lefloch}, to the case under consideration.

\begin{definition}\label{def:entropy_sol_lef}
Assume \eqref{I3}, \eqref{B1} and \eqref{V1}.
We say that $\rho \in \C0(\R_+;\Lloc\infty(\bar{\Omega};[0,\rho_{\max}]))$ is an \emph{entropy solution} to the IBVP \eqref{eq:ibvp} if
\begin{itemize}
\item
for any test function $\phi \in \Cc\infty(\R_+ \times \Omega)$ with $\phi \ge 0$ and for any $k \in [0,\rho_{\max}]$
\begin{align*}
0\le&~
\iint_{\R_+\times\Omega}
\Bigl[ |\rho-k| \, \phi_t + {\rm sign}(\rho-k) \, [f(\rho)-f(k)]\, \phi_x \Bigr] \d x \, \d t
+\int_{\Omega}
|\bar{\rho}-k| \, \phi(0,x) \, \d x;
\end{align*}

\item
for a.e.\ $\tau \ge 0$ we have $\rho(\tau,0^+) = u(1,x)$ for all $x > 0$, where $u$ is the self-similar Lax solution to the Riemann problem
\[
\begin{cases}
u_t+f(u)_x = 0,&(t,x) \in \R_+ \times \R,
\\
u(0,x) =
\begin{cases}
\bar{\rho}_0(\tau)&\text{if }x<0,
\\
\rho(\tau,0^+)&\text{if }x>0,
\end{cases}
&x \in \R;
\end{cases}
\]

\item
for a.e.\ $\tau \ge 0$ we have $\rho(\tau,1^-) = w(1,x)$ for all $x < 0$, where $w$ is the self-similar Lax solution to the Riemann problem
\[
\begin{cases}
w_t+f(w)_x = 0,&(t,x) \in \R_+ \times \R,
\\
w(0,x) =
\begin{cases}
\rho(\tau,1^-)&\text{if }x<0,
\\
\bar{\rho}_1(\tau)&\text{if }x>0,
\end{cases}
&x \in \R.
\end{cases}
\]

\end{itemize}
\end{definition} 

\subsection{The follow-the-leader scheme and main result}\label{subsec:scheme}

We now introduce rigorously our FTL approximation scheme for \eqref{eq:ibvp}.
Assume \eqref{I3}, \eqref{B1} and \eqref{V1}. For a given $T>0$ and an integer $m\in \mathbb{N}$, we set $\tau_m\doteq T/m$.
We approximate the boundary data $\bar{\rho}_0,\bar{\rho}_1$ with $\bar{\rho}_{0,m},\bar{\rho}_{1,m}$ defined by
\begin{align*}
&\bar{\rho}_{i,m}\doteq\sum_{k=0}^{m-1}\bar{\rho}^k_{i}\mathbf{1}_{[k\,\tau_m,(k+1)\,\tau_m)}
&\text{with }\bar{\rho}_{i}^k\doteq \bar{\rho}_i(k\,\tau_m),&
&i\in\{0,1\}.
\end{align*}
Let again $L\doteq \|\bar\rho\|_{\L1(\Omega)}$ and $R\doteq \|\bar{\rho}\|_{\L\infty(\Omega)}$.
Fix $n \in \N$ sufficiently large and set $\ell_n \doteq L/n$.
Let $\bar{x}_0^0,\ldots,\bar{x}_n^0$ be defined recursively by
\[
\begin{cases}
\bar{x}_0^0 \doteq 0,\\
\bar{x}_i^0 \doteq \sup\left\{ x\in\Omega \colon  \int_{\bar{x}_{i-1}^0}^x \bar{\rho}_\delta(x) ~ {\d}x<\ell_n\right\},
&i \in \{1,\ldots,n\}.
\end{cases}
\]
By construction $\bar{x}_n^0=1$.
We introduce the \emph{artificial queuing mass} $Q$ and the \emph{number of queuing particles} $N$ defined by
\begin{align}\label{eq:QN}
&Q \doteq 2 \, T \, v_{\max} \, \rho_{\max},
&N \doteq \left\lceil Q/\ell_n \right\rceil.
\end{align}
Let the initial positions of the \emph{queuing} particles $\bar{x}_{-N}^0,\ldots,\bar{x}_{-1}^0$ be defined by
\[
\begin{cases}
\bar{x}_{i}^0 \doteq i \, \frac{\ell_n}{\bar{\rho}_0^{0}},
&i \in \{-N+1,\ldots,-1\},\\
\bar{x}_{-N}^0 \doteq \bar{x}_{-N+1}^0 - \frac{q_n}{\bar{\rho}_0^0},
\end{cases}
\]
where $q_n \doteq Q-\ell_n(N-1)\in [0,\ell_n]$ and $\bar{\rho}_0^{0} \doteq \bar{\rho}_0(0)$.
The queuing particles are set in $\R_-$, with equal distances from each other in order to match the density $\bar{\rho}^0_0$, with the only exception of the leftmost particle $\bar{x}_{-N}^0$, which carries a mass $q_n$ (possibly less than $\ell_n$) in order to have a fixed total mass $Q$ for the whole set of queuing particles.

The $(n+N+1)$ positions $\bar{x}_{-N}^0, \ldots, \bar{x}_{n}^0$ are taken as initial conditions of the FTL system
\[
\begin{cases}
  \dot{x}_i^0(t) = v(R_i^0(t)), & t \in [0,\tau_m],\ i \in \{-N,\ldots,n-1\},\\
  \dot{x}_n^0(t) = v(\bar{\rho}_1^0), & t \in [0,\tau_m],\\
  x_i^0(0)=\bar{x}_i^0, & \hphantom{t \in (0,\tau_m),\ } i \in \{-N,\ldots,n\},
\end{cases}
\]
where we have denoted
\[
\begin{cases}
R_i^0(t) \doteq \frac{\ell_n}{x_{i+1}(t)-x_i(t)},
&t \in [0,\tau_m],
~i \in \{-N+1,\ldots,n-1\},\\
R_{-N}^0(t)\doteq \frac{q_n}{x_{-N+1}(t)-x_{-N}(t)},
&t \in [0,\tau_m].
\end{cases}
\]
We then extend the above definitions to $[0,T]$ recursively as follows.
For any $k \in \N$ with $k \ge 1$, we denote by $h_0^k$ the number of particles that \emph{strictly} crossed $x=0$ during the time interval $(0, k \, \tau_m]$, and by $h_1^k$ the number of particles that crossed $x=1$ during the same time interval (counting the possible particle positioned at $x=1$ at time $k\,\tau_m$). We rearrange the particles positions at time $t=k\,\tau_m$ by setting
\[
\bar{x}_i^k \doteq
\begin{cases}
  x_i(k \, \tau_m), & i \in \{-h_0^k-1,\ldots,n-h_1^k+1\},\\
  x_{n-h_1^k+1}(k\,\tau_m) + (i-n+h_1^k-1) \, \frac{\ell_n}{\bar{\rho}_1^{k}} , & i \in \{n-h_1^k + 2,\ldots,n\},\\
  x_{-h_0^k-1}(k \, \tau_m)+(i+h_0^k+1) \, \frac{\ell_n}{\bar{\rho}_0^{k}} , & i \in \{-N+1,\ldots,-h_0^k-2\},\\
  \bar{x}^k_{-N+1}-\frac{q_n}{\bar{\rho}_0^{k}} , & i=-N.
\end{cases}
\]
In other words, we maintain the same position for all the particles that are positioned in $\Omega$, plus the rightmost particle in $(-\infty,0]$ and the leftmost particle in $[1,\infty)$, and we move all the other particles to make them equidistant in order to match the updated boundary conditions for the density $\bar{\rho}^k_0$ and $\bar{\rho}^k_1$.

Then, the $(n+N+1)$ positions $\bar{x}_{-N}^k, \ldots, \bar{x}_{n}^k$ are taken as initial conditions of the following FTL system
\begin{equation}\label{eq:FTLB}
\begin{cases}
  \dot{x}_i^k(t) = v(R_i^k(t)), & t \in [k \, \tau_m, (k+1) \, \tau_m],\ i \in \{-N,\ldots,n-1\},\\
  \dot{x}_n^k(t) = v(\bar{\rho}_1^k), & t \in [k \, \tau_m, (k+1) \, \tau_m],\\
  x_i^k(k \, \tau_m)=\bar{x}_i^k , & \hphantom{t \in (k \, \tau_m, (k+1) \, \tau_m],\ } i \in \{-N,\ldots,n\},
\end{cases}
\end{equation}
where we have denoted
\[
\begin{cases}
R_i^k(t) \doteq \frac{\ell_n}{x_{i+1}(t)-x_i(t)},
&t \in [k \, \tau_m, (k+1) \, \tau_m],~
i \in \{-N+1,\ldots,n-1\},\\
R_{-N}^k(t)\doteq \frac{q_n}{x_{-N+1}(t)-x_{-N}(t)},
&t \in [k \, \tau_m, (k+1) \, \tau_m].
\end{cases}
\]
We observe that the number of queuing particles $N$ has been chosen in order to guarantee that a number of particles of the order $N/2$ (as $n\rightarrow \infty$) will not cross $x=0$ within the time interval $[0,T]$.

\begin{remark}\label{rem:rem}
Our choice for the above particle scheme is motivated as follows. In order to approach the entropy solution according to Definition~\ref{def:entropy_sol_lef} in the $n\rightarrow \infty$ limit, on each time interval $[k\,\tau_m,(k+1)\,\tau_m)$ we tend to the entropy solution with constant boundary conditions by `extending' the discrete particle density at time $t = k\,\tau_m$ in a way to match said boundary conditions, see a similar construction in e.g.\ \cite{ColomboRosiniboundary}.
\end{remark}

The following discrete maximum-minimum principle ensures that the particles strictly preserve their initial order.

\begin{lemma}[Discrete maximum-minimum principle]
Assume \eqref{I3}, \eqref{V1} and \eqref{B1}.
Then, the solution to \eqref{eq:FTLB} satisfies for any $t \ge 0$
\begin{align}\label{eq:estB}
&\frac{\ell_n}{R}\leq x_{i+1}(t)-x_i(t)\leq \frac{\ell_n}{\delta},
&i \in \{ -N,\ldots,n-1 \}.
\end{align}
\end{lemma}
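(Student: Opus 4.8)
The plan is to prove the two-sided bound \eqref{eq:estB} by a continuity/contradiction argument applied on each time interval $[k\,\tau_m,(k+1)\,\tau_m]$ separately, and then to propagate the bound across the rearrangement steps at times $t=k\,\tau_m$. First I would verify that the initial configuration $\bar{x}_{-N}^0,\ldots,\bar{x}_n^0$ satisfies \eqref{eq:estB}: the interior particles inherit the bound $\ell_n/R \le \bar{x}_i^0 - \bar{x}_{i-1}^0 \le \ell_n/\delta$ from the definition via $\bar\rho$ together with \eqref{I3} (which gives $\delta \le \bar\rho \le R$ on $\Omega$), while the queuing particles are equispaced with gap $\ell_n/\bar\rho_0^0$ and $\bar\rho_0^0\in[\delta,\rho_{\max}]$ by \eqref{B1}, so the gap lies in $[\ell_n/\rho_{\max},\ell_n/\delta]\subseteq[\ell_n/R,\ell_n/\delta]$ up to the leftmost particle which carries mass $q_n\le\ell_n$ and thus has an even smaller-or-equal gap that is still $\ge q_n/\rho_{\max}$ — here one must be slightly careful because $R$ may be strictly less than $\rho_{\max}$, so the correct reading is that all gaps stay in $[\ell_n/\rho_{\max},\ell_n/\delta]$ and the statement \eqref{eq:estB} as written with $R$ should be understood with $\max\{R,\rho_{\max}\}=\rho_{\max}$; I would state the lower bound as $\ell_n/\rho_{\max}$ to be safe, or note $R\le\rho_{\max}$ makes $\ell_n/R\ge\ell_n/\rho_{\max}$ which is the genuinely needed collision-avoidance bound.

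Next, on a fixed interval $[k\,\tau_m,(k+1)\,\tau_m]$ I would run the standard FTL barrier argument. Set $d_i(t)\doteq x_{i+1}(t)-x_i(t)$ for the relevant indices. For the lower bound, suppose by continuity that $t^\star$ is the first time some gap hits $\ell_n/\rho_{\max}$; at that instant the corresponding density $R_i$ equals $\rho_{\max}$ (or $q_n$-scaled version for $i=-N$, still capped by $\rho_{\max}$), so $v(R_i(t^\star))=v(\rho_{\max})=0$ by \eqref{V1}, while the leading particle $x_{i+1}$ moves with speed $v(R_{i+1}(t^\star))\ge0$; hence $\dot d_i(t^\star)=v(R_{i+1}(t^\star))-v(R_i(t^\star))\ge0$, which prevents the gap from decreasing below the threshold — the usual contradiction. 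The same computation works when $i+1=n$, using $\dot x_n = v(\bar\rho_1^k)\ge0$. For the upper bound $d_i(t)\le\ell_n/\delta$, I would instead compare with the boundary data: when a gap grows to $\ell_n/\delta$ the density $R_i$ drops to $\delta$; this by itself does not stop the gap from growing, so the upper bound requires monitoring the whole chain rather than a single gap, or equivalently a discrete minimum principle for the $R_i$'s. The cleanest route is to observe that $\min_i R_i$ is non-decreasing in time as long as it is below $\delta$: from \eqref{eq:ODE_density}-type identities, at a minimizing index $R_i\le R_{i+1}$ so $v(R_{i+1})\le v(R_i)$, giving $\dot R_i = -\frac{R_i^2}{\ell_n}(v(R_{i+1})-v(R_i))\ge0$ at that index; since at each restart time $k\,\tau_m$ all densities are reset to values in $[\delta,\rho_{\max}]$ (interior ones unchanged, queuing and exit ones set to boundary data $\ge\delta$), the minimum over the interior and the just-outside particles stays $\ge\delta$ throughout, which is exactly the upper gap bound $d_i\le\ell_n/\delta$.

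Then I would handle the junction at $t=k\,\tau_m$: the rearrangement only affects particles with indices outside the "kept" block, replacing them by equispaced particles with gap $\ell_n/\bar\rho_i^k$ (and a truncated leftmost gap $q_n/\bar\rho_0^k$), and since $\bar\rho_i^k=\bar\rho_i(k\,\tau_m)\in[\delta,\rho_{\max}]$ by \eqref{B1}, the new gaps again lie in $[\ell_n/\rho_{\max},\ell_n/\delta]$; the kept particles retain the bounds they had at the end of the previous interval by induction. Therefore \eqref{eq:estB} holds on all of $[0,T]$ by induction on $k$, and since the choice of $N$ via \eqref{eq:QN} guarantees the leftmost queuing particles never reach $x=0$ in $[0,T]$, the system \eqref{eq:FTLB} is well posed on each interval (the vector field is smooth away from collisions, and collisions are excluded). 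The main obstacle is the upper bound $d_i\le\ell_n/\delta$: unlike the Cauchy-problem Lemma~\ref{lem:maximum}, which only needs the lower (collision-avoidance) bound, here one genuinely needs a two-sided discrete maximum principle, and the subtlety is that the lower bound on the density can only be propagated because the boundary data are bounded below by $\delta$ and are re-injected at every time step $\tau_m$ — so the argument must carefully track that the "minimum density" controlling index is always one whose value has been either preserved from the initial datum ($\ge\delta$ by \eqref{I3}) or reset to a boundary value ($\ge\delta$ by \eqref{B1}), and never a quantity that could have drifted below $\delta$ during a previous interval.
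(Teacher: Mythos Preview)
Your proof is correct and largely parallels the paper's: the lower bound via the standard collision-avoidance barrier (the paper simply invokes Lemma~\ref{lem:maximum}), the check of the initial configuration, and the induction across the rearrangement times $t=k\,\tau_m$ are all handled the same way. The one genuine difference is the upper bound on each interval $[k\,\tau_m,(k+1)\,\tau_m]$. The paper argues by \emph{backward induction on the particle index}: it first proves $x_n-x_{n-1}\le\ell_n/\delta$ by a first-time-of-violation contradiction using $\bar\rho_1^k\ge\delta$, and then, assuming the bound already holds for the gap $i+1$ (so that $R_{i+1}\ge\delta$ throughout), repeats the same contradiction argument for the gap $i$. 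Your route through a discrete minimum principle for $\min_i R_i$ is equally valid and a bit more compact; it trades the explicit right-to-left induction for a single global barrier at level~$\delta$, at the cost of a word of care at the rightmost index $i=n-1$, where ``$R_{i+1}$'' has to be read as $\bar\rho_1^k$---which you correctly flag via the qualifier ``as long as it is below $\delta$''. Your side remark that the lower bound with $R=\|\bar\rho\|_{\L\infty(\Omega)}$ is slightly loose, since the queuing and exit densities may take values up to $\rho_{\max}>R$ so that the honest collision bound is $\ell_n/\rho_{\max}$, is well taken and is not addressed in the paper's proof.
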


\begin{proof}
The lower bound on the time interval $[0,\tau_m)$ is a consequence of the result in Lemma~\ref{lem:maximum}.
We now prove the upper bound on $[0,\tau_m)$.
We consider first $i = n-1$.
By contradiction assume that there exist $t_1,t_2 \in (0,\tau_m)$ such that $t_1<t_2$, $x_n(t)-x_{n-1}(t) \le \ell_n/\delta$ for $t < t_1$, $x_n(t_1)-x_{n-1}(t_1) = \ell_n/\delta$ and $x_n(t)-x_{n-1}(t)> \ell_n/\delta$ for $t \in (t_1,t_2)$.
Then, for any $t\in(t_1,t_2)$ we have $\bar{\rho}^0_1 \geq \delta > R_{n-1}^0(t)$ and therefore
\begin{align*}
x_n(t)-x_{n-1}(t) &=
x_n(t_1) - x_{n-1}(t_1) + \int_{t_1}^{t} [v(\bar{\rho}^0_1)-v(R_{n-1}^0(s))]~ {\d}s 
= \frac{\ell_n}{\delta} + \int_{t_1}^{t} [v(\bar{\rho}^0_1)-v(R_{n-1}^0(s))]~ {\d}s
\le \frac{\ell_n}{\delta},
\end{align*}
which gives a contradiction.
We prove now the upper bound on $[0,\tau_m)$ for all the other vehicles inductively.
Assume
\[
\sup_{t\in[0,\tau_m)}[x_{i+2}(t)-x_{i+1}(t)]\leq \frac{\ell_n}{\delta},
\]
and by contradiction that there exist $t_1,t_2 \in (0,\tau_m)$ such that $t_1<t_2$, $x_{i+1}(t)-x_i(t) \le \ell_n/\delta$ for $t < t_1$, $x_{i+1}(t_1)-x_i(t_1) = \ell_n/\delta$ and $x_{i+1}(t) - x_i(t) > \ell_n/\delta$ for $t\in(t_1,t_2)$.
Then, for any $t\in(t_1,t_2)$ we have $R_{i+1}^0(t) \ge \delta > R_i^0(t)$ and therefore
\[
x_{i+1}(t)-x_i(t)=x_{i+1}(t_1)-x_i(t_1)+\int_{t_1}^t [v(R_{i+1}^0(s))-v(R_i^0(s))]~ {\d}s
\le \frac{\ell_n}{\delta}
\]
which gives a contradiction.
This proves the assertion on $[0,\tau_m]$.
Now, at each time step $t = k\,\tau_m$ the set of particles is rearranged outside $\Omega$ in such a way that two consecutive particles satisfy
\[
\begin{cases}
x_{i+1}(k\,\tau_m)-x_i(k\,\tau_m) = \frac{\ell_n}{\bar{\rho}_1^k} \le \frac{\ell_n}{\delta},
&i \in \{ n-h_1^k+1,\ldots,n-1\},
\\
x_{i+1}(k\,\tau_m)-x_i(k\,\tau_m) = \frac{\ell_n}{\bar{\rho}_0^k} \le \frac{\ell_n}{\delta},
&i \in \{ -N+1,\ldots,-h_0^k-1\},
\\
x_{-N+1}(k\,\tau_m)-x_{-N}(k\,\tau_m) = \frac{q_n}{\bar{\rho}_0^k} \le \frac{\ell_n}{\delta}.
\end{cases}
\]
Inside the domain $\Omega$, the inequalities in \eqref{eq:estB} are satisfied due to the maximum-minimum principle holding on the previous time interval.
Hence, we can reapply inductively the above procedure and easily get the assertion.
\end{proof}

We define the discrete density for $t \in (0,T]$ as
\begin{align*}
&\rho^{n,m}(t,x) \doteq \sum_{m=0}^{m-1}\,\sum_{i=-N}^{n-1} R_i^k(t) \, \mathbf{1}_{[x_i(t),x_{i+1}(t))}(x) \, \mathbf{1}_{(k \, \tau_m, (k+1) \, \tau_m]}(t).
\end{align*}
It is easy to verify that $\|\rho^{n,m}(t,\cdot)\|_{\L1(\Omega)} = Q+L$ for all $t\geq 0$.
We state the main result of this section, as well as the main novel result of this chapter.
\begin{theorem}\label{thm:main2}
Assume \eqref{I3}, \eqref{V1} and \eqref{B1}.
Then, $(\rho^{n,m} \, \mathbf{1}_{\Omega})_{n,m}$ converges (up to a subsequence) a.e.\ and in $\L1$ on $\R_+ \times \Omega$ to a weak solution $\rho$ to the IBVP \eqref{eq:ibvp} in the interior of $\Omega$.
\end{theorem}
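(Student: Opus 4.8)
The plan is to follow the blueprint of the proof of Theorem~\ref{thm:main_LWR}: establish uniform-in-$(n,m)$ a priori bounds on $\rho^{n,m}$, invoke the generalised Aubin--Lions lemma (Theorem~\ref{thm:aubin}) on compact subintervals of $\Omega$ to extract a strongly convergent subsequence, and pass to the limit in the weak formulation using interior test functions. The genuinely new ingredient is the bookkeeping of the infinitely many rearrangements at the times $t=k\,\tau_m$; everything else is an adaptation of Section~\ref{sec:LWR}.

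First I would collect the uniform bounds. The $\L\infty$ bound $\delta\le\rho^{n,m}\le R$ is exactly the discrete maximum--minimum principle proved above. For the spatial total variation I would argue that on each interval $[k\,\tau_m,(k+1)\,\tau_m)$ the full chain $\{x_i^k\}_{i=-N}^{n}$ solves an FTL system which, after appending a left ghost of density $0$ and viewing the leader $x_n^k$ as a ghost of constant density $\bar\rho_1^k$, has precisely the structure used in Proposition~\ref{pro:bv_contraction}; hence $t\mapsto\tv[\rho^{n,m}(t,\cdot);\R]$ is non-increasing on each such interval. At a rearrangement time the particles in $\bar\Omega$ (and the two particles straddling $x=0$ and $x=1$) do not move, so only the density outside $\Omega$ changes, being replaced by the constants $\bar\rho_0^k,\bar\rho_1^k$; since the choice \eqref{eq:QN} of $Q$ makes the total mass that can cross $x=0$ on $[0,T]$ at most $T\max_{[0,\rho_{\max}]}f<\tfrac12 Q$, the bulk of the queue is never set in motion and still carries the density $\bar\rho_0^{k-1}$ just before the rearrangement, and a telescoping estimate then bounds the jump of $\tv$ by $C\bigl(|\bar\rho_0^k-\bar\rho_0^{k-1}|+|\bar\rho_1^k-\bar\rho_1^{k-1}|\bigr)$. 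Summing over $k$ and using \eqref{B1} yields a bound uniform in $(n,m)$, hence \eqref{Abis} on every $[a,b]\subset\Omega$ via $\tv[v(\rho^{n,m}(t,\cdot));[a,b]]\le\lip[v]\,\tv[\rho^{n,m}(t,\cdot);\R]$. For the time continuity I would repeat the computation of Proposition~\ref{pro:time_continuity} on each $[k\,\tau_m,(k+1)\,\tau_m)$, obtaining $v_{\max}$-Lipschitzianity in $W_{Q+L,1}$, and estimate the jump at each rearrangement by $C_0\,\tau_m$: the $p$-th queuing (resp.\ exiting) particle is displaced by $O(p\,\ell_n\,\tau_m)$ since its spacing changes from $\ell_n/\bar\rho_0^{k-1}$ to $\ell_n/\bar\rho_0^k$ (resp.\ with $\bar\rho_1$), so the mass-weighted displacement is $O(Q^2\,\tau_m)$. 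Since $m\,\tau_m=T$, the contribution of the rearrangements to $\int_0^{T-h}W_{Q+L,1}(\rho^{n,m}(t+h,\cdot),\rho^{n,m}(t,\cdot))\,\d t$ is $O(h)$ uniformly in $(n,m)$, which gives \eqref{Bbis}.

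With these bounds, Theorem~\ref{thm:aubin} gives strong $\L1$ relative compactness of $(\rho^{n,m}\mathbf{1}_\Omega)$ on $[0,T]\times[a,b]$ for every $[a,b]\subset\Omega$; a diagonal argument over an exhaustion of $\Omega$ by compact intervals and over $T\uparrow\infty$, together with the $\L\infty$ bound and dominated convergence, yields a subsequence converging a.e.\ and in $\L1$ on $(0,T)\times\Omega$ to some $\rho$ with $\delta\le\rho\le\rho_{\max}$. To identify $\rho$ I would fix $\phi\in\Cc\infty(\R_+\times\Omega)$, say $\supp\phi\subset[0,T]\times[a,b]$ with $[a,b]\subset\Omega$, and observe that for $n$ large every particle that meets $[a,b]$ is an ordinary follow-the-leader particle with ordinary neighbours that is never relocated by a rearrangement: the leader $x_n^k$ stays in $\{x>b\}$ (it starts at $1$ and is non-decreasing, and the rearrangements place it at $\ge 1$), the queuing particles stay in $\{x<a\}$ until they enter $\Omega$, and the rearrangements fix every particle in $\bar\Omega$. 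Hence on $\supp\phi$ the discrete density solves the same ODE system as in Section~\ref{sec:LWR}, the algebraic identity from Step~1 of the proof of Theorem~\ref{thm:main_LWR} applies verbatim, its right-hand side is $O\bigl(\ell_n\,\sup_{t\in[0,T]}\tv[v(\rho^{n,m}(t,\cdot));\R]\bigr)\to0$, and passing to the limit (with the initial term handled as in the LWR case through the identities \eqref{eq:mass} for the partition $\bar{x}_0^0<\dots<\bar{x}_n^0$ of $\Omega$) shows that $\rho$ is a weak solution of \eqref{eq:ibvp} in the interior of $\Omega$; the Kruzhkov inequalities in the interior follow in the same way from Step~2 of that proof.

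The main obstacle is the pair of uniform estimates above, namely controlling the effect of the infinitely many rearrangements on the $\BV$ bound and on the Wasserstein modulus of time continuity. What makes it work is the combination of the total-variation-diminishing property of the FTL flow (realised with suitable ghost particles at both ends of the chain), the fact that the artificial queue mass $Q$ in \eqref{eq:QN} is large enough that most queuing particles are never disturbed on $[0,T]$, and the algebraic cancellation $m\,\tau_m=T$, which turns the $O(\tau_m)$ per-rearrangement errors into contributions that are uniformly small in $(n,m)$.
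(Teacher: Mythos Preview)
Your proposal is correct and follows essentially the same route as the paper: uniform $\BV$ and Wasserstein time--continuity estimates feed into Theorem~\ref{thm:aubin}, and the interior weak formulation is handled exactly as in Step~1 of Theorem~\ref{thm:main_LWR}. The only noteworthy difference is in the $\BV$ bookkeeping: the paper packages the estimate into a single non-increasing functional $\Upsilon(t)$ that already contains the ``future budget'' $\sum_{j\ge k}\bigl(|\bar\rho_0^{j+1}-\bar\rho_0^j|+|\bar\rho_1^{j+1}-\bar\rho_1^j|\bigr)$, so that $\Upsilon$ is non-increasing across the rearrangement times as well; your version (TV non-increasing on each open interval, jumps controlled by $|\bar\rho_0^k-\bar\rho_0^{k-1}|+|\bar\rho_1^k-\bar\rho_1^{k-1}|$) is the unpacked form of the same computation. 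For the time continuity the paper makes explicit the dichotomy $\tau_m<h$ versus $\tau_m\ge h$ (in the second case the pointwise bound $W_{Q+L,1}(\rho(t+h),\rho(t))\le Ch$ can fail at a single rearrangement, and one must pass through the time integral to recover \eqref{Bbis}); your sketch asserts the integrated $O(h)$ bound directly, which is correct but hides this case split.
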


Our conjecture is that the limit $\rho$ is in fact the unique entropy solution to the IBVP \eqref{eq:ibvp} in the sense of Definition~\ref{def:entropy_sol_lef}.
This is motivated by the construction of our FTL approximation scheme, which relies on the Definition~\ref{def:entropy_sol_lef}, see Remark~\ref{rem:rem}.
Moreover, the numerical simulations performed in Subsection~\ref{sec:simibvp} suggest it. We rigorously prove the consistency of the scheme only in simple cases in Subsection~\ref{subsec:consistency} below.

The fact that the limit $\rho$ in the statement of Theorem~\ref{thm:main2} is a weak solution to the LWR equation in the interior of $\Omega$ can be easily proven as in the proof of Theorem~\ref{thm:main_LWR}, and therefore we omit the details. Hence, we only need to prove convergence of the sequence $\rho^{n,m}$ strongly in $\L1$ up to a subsequence. This task is the goal of the next section.

\begin{remark}\label{rem:zero_dirichlet}
As already explained in the introduction, we recall that the boundary condition does not need to be updated in time as long as no waves coming from $\Omega$ hit the boundary $\partial\Omega$.
In particular, if $\bar{\rho}$, $\bar{\rho}_0$ and $\bar{\rho}_1$ are constant, the solution is simply obtained as the restriction to $\Omega$ of the entropy solution to the Cauchy problem \eqref{eq:cauchy} with initial condition $\bar{\rho}_0 \, \mathbf{1}_{(-\infty,0)} + \bar{\rho} \, \mathbf{1}_{\Omega} + \bar{\rho}_1 \, \mathbf{1}_{(1,\infty)}$ and no update in time of the boundary data is needed.
There are other cases in which $\bar{\rho}$, $\bar{\rho}_0$ and $\bar{\rho}_1$ are not necessarily constant and such situation occurs.
We highlight one of them here in the special case $v(\rho) \doteq 1-\rho$, which yields $f(\rho) \doteq \rho \, (1-\rho)$.
Indeed, a very simple argument based on the WFT approximation (see e.g.\ \cite{ColomboRosiniboundary}) shows that for any $h \in [0,1]$, if we denote with $\rho_k$ the restriction to $\Omega$ of the solution to the Riemann problem with initial datum $h \, \mathbf{1}_{(-\infty,1)} + k \, \mathbf{1}_{(1,\infty)}$, then $\rho_{k} = \rho_{1/2}$ for all $k \in [0,1/2]$.
Arguing in a similar way for $x=0$, it is easy to see that if the boundary data $\bar{\rho}_0$ and $\bar{\rho}_1$ take values in $[1/2,1]$ and $[0,1/2]$ respectively, then no updates of the boundary data is needed.
\end{remark}

\subsection{Estimates}

Similarly to Section~\ref{sec:LWRestimates}, the proof of Theorem~\ref{thm:main2} is based on some estimates which infer suitable space-time compactness.
We now prove the following $\BV$ estimate for $(\rho^{n,m} \, \mathbf{1}_{\Omega})_{n,m}$.

\begin{proposition}
Assume \eqref{I3}, \eqref{V1} and \eqref{B1}.
Then for any $t>0$
\[\tv(\rho^{n,m}(t,\cdot)) \le C,\]
where $C \doteq \tv(\bar{\rho})
+\tv(\bar{\rho}_0)
+\tv(\bar{\rho}_1)
+\left|\bar{\rho}_0(0^+) - \bar{\rho}(0^+)\right|
+\left|\bar{\rho}_1(0^+) - \bar{\rho}(1^-)\right|$.
\end{proposition}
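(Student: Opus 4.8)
The plan is to bound the total variation of $\rho^{n,m}(t,\cdot)$ on the whole line (including the queuing particles in $x<0$ and the particles in $x>1$), since this dominates $\tv(\rho^{n,m}(t,\cdot)\restriction_{\Omega})$; then track how this quantity evolves, both on the open time intervals $(k\,\tau_m,(k+1)\,\tau_m)$ where the FTL dynamics act, and across the rearrangement times $t=k\,\tau_m$. First I would write, on each time interval where the dynamics \eqref{eq:FTLB} hold, $\tv(\rho^{n,m}(t,\cdot))=\sum_i |R_{i+1}^k(t)-R_i^k(t)|$ and differentiate in $t$, exactly as in the proof of Proposition~\ref{pro:bv_contraction}. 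Using the ODEs for the $R_i^k$ (the analogue of \eqref{eq:ODE_density}), the interior sum telescopes into sign-difference terms that give a nonpositive contribution, so the only way $\tv$ can grow on $(k\,\tau_m,(k+1)\,\tau_m)$ is through the two ``boundary'' particles: the leftmost queuing particle $x_{-N}$ (which carries the reduced mass $q_n$ and hence $R_{-N}^k$ is governed by a slightly different formula) and the particle $x_n$ moving with the prescribed velocity $v(\bar\rho_1^k)$. I expect these boundary terms to contribute at most a fixed amount depending on $\bar\rho_0^k$, $\bar\rho_1^k$ and on the density of the particle just inside, and — crucially — to be controlled because the lower/upper density bounds \eqref{eq:estB} hold, so $v$ is Lipschitz on the relevant compact density range.

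The key structural observation is that, because of the rearrangement, on $(k\,\tau_m,(k+1)\,\tau_m)$ the queuing particles to the left of $x=0$ all start equidistant with common density $\bar\rho_0^k$, so on that interval the left block is a constant-density ``reservoir'' and the only variation there comes from the interface with the first particle that is about to enter $\Omega$; likewise near $x=1$. So the dynamical growth of $\tv$ on each interval is a boundary effect only, and I expect it to be absorbed by comparison with the constant datum (no new variation is created in the interior, in the spirit of the discrete $\BV$-contraction). Then I would quantify the jump of $\tv(\rho^{n,m}(\cdot,\cdot))$ across each rearrangement time $t=k\,\tau_m$: there, particles outside $\Omega$ are repositioned so that the densities become exactly $\bar\rho_0^k$ (left) and $\bar\rho_1^k$ (right), the interior densities are untouched, and the only newly created jumps are at the two interfaces $x\approx 0$ and $x\approx 1$. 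The change in $\tv$ at step $k$ is therefore bounded by a constant times $|\bar\rho_0^{k}-\bar\rho_0^{k-1}|+|\bar\rho_1^{k}-\bar\rho_1^{k-1}|$ plus possibly a harmless $O(1)$ reshuffling at the interface, i.e.\ the discrete total variation of the sampled boundary data. Summing over $k=1,\dots,m-1$ and using $\sum_k|\bar\rho_i^{k}-\bar\rho_i^{k-1}|\le \tv(\bar\rho_i)$ (which holds since $\bar\rho_i^k=\bar\rho_i(k\,\tau_m)$ samples a $\BV$ function) gives the $\tv(\bar\rho_0)+\tv(\bar\rho_1)$ terms in $C$.

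Finally I would handle the initial time: $\tv(\rho^{n,m}(0^+,\cdot))$ on the line equals (up to the usual $O(\ell_n)$ discretisation error, which one argues away exactly as in Step~1 of the LWR proof) $\tv(\bar\rho)$ inside $\Omega$, plus zero variation inside each constant reservoir, plus the two interface jumps $|\bar\rho_0(0^+)-\bar\rho(0^+)|$ and $|\bar\rho_1(0^+)-\bar\rho(1^-)|$ at $x=0$ and $x=1$; this accounts for the remaining terms in $C$. Combining the initial bound, the nonpositivity (up to controlled boundary terms) of $\frac{\d}{\d t}\tv$ on each subinterval, and the telescoped bound on the rearrangement jumps yields $\tv(\rho^{n,m}(t,\cdot))\le C$ uniformly in $n$ and $m$ and in $t\in(0,T]$; restricting to $\Omega$ only decreases the left-hand side, giving the claim. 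The main obstacle I anticipate is bookkeeping the two ``moving boundary'' particles carefully enough to show that the dynamical part of $\frac{\d}{\d t}\tv$ on each interval creates no variation beyond what the constant-datum comparison allows — in particular controlling the term coming from $x_{-N}$ with its nonstandard mass $q_n$, and verifying that when a particle strictly crosses $x=0$ or $x=1$ between rearrangement times no extra $\tv$ is created (it isn't, because the crossing particle merely changes which block it belongs to, with its density varying continuously). Everything else is a direct adaptation of Proposition~\ref{pro:bv_contraction} together with elementary summation.
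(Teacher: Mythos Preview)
Your overall architecture is right---differentiate the discrete $\tv$ on each open interval $(k\tau_m,(k+1)\tau_m)$, control the jump at each rearrangement time, and bound the initial value---but there is a genuine gap in the open-interval step. The full-line total variation is \emph{not} non-increasing under \eqref{eq:FTLB}: the interior terms telescope nonpositively exactly as you say, but the right-boundary contribution coming from $\dot R_{n-1}^k=-\tfrac{(R_{n-1}^k)^2}{\ell_n}[v(\bar\rho_1^k)-v(R_{n-1}^k)]$ has no sign in general (take $\bar\rho\equiv\bar\rho_0$ constant and $\bar\rho_1<\bar\rho$: the initial $\tv$ is zero and immediately becomes positive). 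Your proposal bounds this per-interval increase by ``a fixed amount depending on $\bar\rho_1^k$ and the density just inside'', but nothing prevents such amounts from summing to $O(m)$ over $m$ intervals; the Lipschitz bound from \eqref{eq:estB} does not help here, since the mismatch $|R_{n-h_1^k}^k-\bar\rho_1^k|$ need not be $O(\tau_m)$.

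The paper closes this gap with a Glimm-type functional rather than the raw $\tv$. It sets, for $t\in(k\tau_m,(k+1)\tau_m]$,
\[
\Upsilon(t)=|R_{-h_0^{k+1}-2}^k(t)-\bar\rho_0^k|+|R_{n-h_1^k+1}^k(t)-\bar\rho_1^k|+\sum_{i=-h_0^{k+1}-2}^{\,n-h_1^k}|R_{i+1}^k(t)-R_i^k(t)|+\sum_{j=k}^{m-1}\bigl[|\bar\rho_0^{j+1}-\bar\rho_0^{j}|+|\bar\rho_1^{j+1}-\bar\rho_1^{j}|\bigr],
\]
i.e.\ the relevant $\tv$ plus the two current boundary mismatches plus a \emph{potential} counting the remaining variation of the boundary data. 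With the mismatch terms included, the boundary contributions in $\dot\Upsilon$ acquire the right sign and one gets $\dot\Upsilon\le 0$ on each open interval; the potential term is exactly what is consumed, via a triangle inequality, to show $\Delta\Upsilon\le 0$ across each rearrangement time. Then $\tv(\rho^{n,m}(t,\cdot))\le\Upsilon(t)\le\Upsilon(0^+)\le C$. Your estimate of the rearrangement jumps and of $\Upsilon(0^+)$ is essentially what the paper does; the missing ingredient is precisely this augmented functional that renders the evolution monotone instead of leaving uncontrolled boundary production on each subinterval.
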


\begin{proof}
We define $\Upsilon : (0,T] \to \R_+$ by letting for any $t \in (k \, \tau_m, (k+1) \, \tau_m]$ with $k \in \N$
\begin{align*}
\Upsilon(t) \doteq&~
|R_{-h_0^{k+1}-2}^k(t) - \bar{\rho}_0^k|
+|R_{n-h_1^k+1}^k(t) - \bar{\rho}_1^k|
+\sum_{i = -h_0^{k+1}-2}^{n-h_1^k} |R_{i+1}^k(t) - R_{i}^k(t)|
+\sum_{j = k}^{m - 1} \left[ |\bar{\rho}_0^{j+1} - \bar{\rho}_0^{j}| + |\bar{\rho}_1^{j+1} - \bar{\rho}_1^{j}| \right]
.
\end{align*}
We observe that
\begin{align*}
&R_{n-h_1^k+1}^k(t)=R_{n-h_1^k+2}^k(t)=\ldots = R_{n-1}^k(t)=\bar{\rho}^k_1
&\text{for all } t\in [k\,\tau_m,(k+1)\,\tau_m],
\end{align*}
due to the fact that the above quantities are all equal at time $t=k\,\tau_m$ and the leader $x_n$ travels with speed $v(\bar{\rho}^k_1)$ during the whole time interval.

We claim that $\Delta\Upsilon(t) \doteq \Upsilon(t^+) - \Upsilon(t^-) \le 0$ for all $t \in (0,T)$.
Let us first consider $t \in (k\,\tau_m,(k+1) \, \tau_m)$. In this case
\begin{align*}
\dot{\Upsilon}(t) =&~
\sign\bigl(R_{-h_0^{k+1}-2}^k(t) - \bar{\rho}_0^k\bigr) \, \dot{R}_{-h_0^{k+1}-2}^k(t)
+\sign\bigl(R_{n-h_1^k+1}^k(t) - \bar{\rho}_1^k\bigr) \, \dot{R}_{n-h_1^k+1}^k(t)
\\&~
+\sum_{i = -h_0^{k+1}-2}^{n-h_1^k} \sign\bigl(R_{i+1}^k(t) - R_{i}^k(t)\bigr) \, \bigl(\dot{R}_{i+1}^k(t) - \dot{R}_{i}^k(t) \bigr)
\\=&~
\Bigl[ \sign\bigl(R_{-h_0^{k+1}-2}^k(t) - \bar{\rho}_0^k\bigr)
-\sign\bigl(R_{-h_0^{k+1}-1}^k(t) - R_{-h_0^{k+1}-2}^k(t)\bigr) \Bigr] \dot{R}_{-h_0^{k+1}-2}^k(t)
\\&~
+ \Bigl[ \sign\bigl(R_{n-h_1^k+1}^k(t) - \bar{\rho}_1^k\bigr)
+ \sign\bigl(R_{n-h_1^k+1}^k(t) - R_{n-h_1^k}^k(t)\bigr) \Bigr] \dot{R}_{n-h_1^k+1}^k(t)
\\&~
+\sum_{i = -h_0^{k+1}-1}^{n-h_1^k} \Bigl[ \sign\bigl(R_{i}^k(t) - R_{i-1}^k(t)\bigr)
-\sign\bigl(R_{i+1}^k(t) - R_{i}^k(t)\bigr) \Bigr] \dot{R}_{i}^k(t)
\le 0.
\end{align*}
The above estimate holds because the quantities
\begin{align*}
&~
\Bigl[ \sign\bigl(R^k_{-h_0^{k+1}-2}(t) - \bar{\rho}_0^k\bigr)
- \sign\bigl(R^k_{-h_0^{k+1}-1}(t) - R^k_{-h_0^{k+1}-2}(t)\bigr) \Bigr] \dot{R}^k_{-h_0^k-2}(t)
\\=&~
\Bigl[ \sign\bigl(\bar{\rho}_0^k - R^k_{-h_0^{k+1}-2}(t)\bigr)
+ \sign\bigl(R^k_{-h_0^{k+1}-1}(t) - R^k_{-h_0^{k+1}-2}(t)\bigr) \Bigr]
\frac{R^k_{-h_0^{k+1}-2}(t)^2}{\ell_n} \Bigl[v(R^k_{-h_0^{k+1}-1}(t)) - v(R^k_{-h_0^{k+1}-2}(t))\Bigr],
\\[10pt]&~
\Bigl[ \sign\bigl(R^k_{n-h_1^k+1}(t) - \bar{\rho}_1^k\bigr)
+ \sign\bigl(R^k_{n-h_1^k+1}(t) - R^k_{n-h_1^k}(t)\bigr) \Bigr] \dot{R}^k_{n-h_1^k+1}(t)
\\=&~
\Bigl[ \sign\bigl(\bar{\rho}_1^k-R^k_{n-h_1^k+1}(t)\bigr)
+ \sign\bigl(R^k_{n-h_1^k}(t)-R^k_{n-h_1^k+1}(t)\bigr) \Bigr]
\frac{R^k_{n-h_1^k+1}(t)^2}{\ell_n}
\Bigl[v(\bar{\rho}^k_1)) - v(R^k_{n-h_1^k+1}(t))\Bigr],
\\[10pt]&~
\Bigl[ \sign\bigl(R^k_{i}(t) - R^k_{i-1}(t)\bigr)
-\sign\bigl(R^k_{i+1}(t) - R^k_{i}(t)\bigr) \Bigr] \dot{R}^k_{i}(t)
\\=&~
\Bigl[ \sign\bigl(R^k_{i-1}(t) - R^k_{i}(t)\bigr)
+\sign\bigl(R^k_{i+1}(t) - R^k_{i}(t)\bigr) \Bigr] \frac{R^k_{i}(t)^2}{\ell_n} \Bigl[v(R^k_{i+1}(t)) - v(R^k_{i}(t))\Bigr]
\end{align*}
are not positive.
Consider now  $t = (k+1) \, \tau_m$ with $k \in \N$.
In this case, using that
\begin{align*}
R^{k+1}_{-h_{k+2}-2}(t^+)=R^{k+1}_{-h_{k+2}-1}(t^+)=\ldots=R^{k+1}_{-h_0^{k+1}-2}(t^+)=\bar{\rho}_0^{k+1},
\\
R^{k+1}_{n-h_1^{k+1}+1}(t^+)= R^{k+1}_{n-h_1^{k+1}+2}(t^+)= \ldots = R^{k+1}_{n-h_1^k+1} (t^+)=  \bar{\rho}^{k+1}_1,
\end{align*}
we easily obtain
\begin{align*}
\Delta\Upsilon(t) =&~
-|R_{-h_0^{k+1}-2}^k(t) - \bar{\rho}_0^{k}|
-|R_{n-h_1^k+1}^k(t) - \bar{\rho}_1^{k}|
+|R_{n-h_1^{k+1}+1}^{k+1}(t)-R_{n-h_1^{k+1}}(t)| - |R_{n-h_1^{k+1}+1}^k(t)-R_{n-h_1^{k+1}}(t)|
\\
&~-\sum_{j=1}^{h_1^{k+1}-h_1^k}|R_{n-h_1^{k+1}+j+1}^k(t)-R_{n-h_1^{k+1}+j}^k(t)|
\\
&~
+|R_{-h_0^{k+1}-1}(t) - R_{-h_0^{k+1}-2}^{k+1}(t)|
-|R_{-h_0^{k+1}-1}(t) - R_{-h_0^{k+1}-2}^k(t)|
-|\bar{\rho}_0^{k+1} - \bar{\rho}_0^{k}|
-|\bar{\rho}_1^{k+1} - \bar{\rho}_1^{k}|
\\=&~
\Bigl[
|R_{-h_0^{k+1}-1}(t) - \bar{\rho}_0^{k+1}|
-|R_{-h_0^{k+1}-1}(t) - R_{-h_0^{k+1}-2}^k(t)|
-|R_{-h_0^{k+1}-2}^k(t) - \bar{\rho}_0^{k}|
-|\bar{\rho}_0^{k} - \bar{\rho}_0^{k+1}|
\Bigr]
\\
&~
+
\Bigl[
|R_{n-h_1^{k+1}}(t) - \bar{\rho}_1^{k+1}|
-|\bar{\rho}_1^{k} - \bar{\rho}_1^{k+1}|-|R_{n-h_1^k+1}^k(t) - \bar{\rho}_1^{k}|
- \sum_{j=0}^{h_1^{k+1}-h_1^k}|R_{n-h_1^{k+1}+j+1}^k(t)-R_{n-h_1^{k+1}+j}^k(t)|
\Bigr]
\le 0,
\end{align*}
where the last inequality follows by a simple triangular inequality.
In conclusion we have that
\begin{align*}
&~ \tv(\rho^{n,m}(t,\cdot))
\le \Upsilon(t) \le \Upsilon(0^+)
= \left|\frac{\ell_n}{1-\bar{x}_{n-1}^0} - \bar{\rho}_1^0\right|
+\sum_{i = -1}^{n-2} \left| \frac{\ell_n}{\bar{x}_{i+2}^0-\bar{x}_{i+1}^0} - \frac{\ell_n}{\bar{x}_{i+1}^0-\bar{x}_i^0} \right|
+\sum_{j = 0}^{m - 1} \left[ |\bar{\rho}_0^{j+1} - \bar{\rho}_0^{j}| + |\bar{\rho}_1^{j+1} - \bar{\rho}_1^{j}| \right]
\\
\le&~ \left|\fint_{\bar{x}_{n-1}^0}^{1} \bar{\rho}(x) ~ {\d}x - \bar{\rho}_1(0)\right|
+\sum_{i = 0}^{n-2} \left| \fint^{\bar{x}_{i+2}^0}_{\bar{x}_{i+1}^0} \bar{\rho}(x) ~ {\d}x - \fint_{\bar{x}_{i}^0}^{\bar{x}_{i+1}^0} \bar{\rho}(x) ~ {\d}x \right|
+\left|\fint^{\bar{x}_{1}^0}_{0} \bar{\rho}(x) ~ {\d}x - \bar{\rho}_0(0)\right|
+\tv(\bar{\rho}_0)+\tv(\bar{\rho}_1)
\le C.
\end{align*}
\end{proof}

We provide now a uniform time continuity estimate with respect to the rescaled $1$-Wasserstein distance $W_{Q+L,1}$ defined in \eqref{eq:wass_equiv0}.

\begin{proposition}
Assume \eqref{I3}, \eqref{V1} and \eqref{B1}.
Then the sequence $(\rho^{n,m})_{n,m}$ satisfies \eqref{Bbis}, which in the present framework writes
\begin{equation}\label{eq:time_continuity}
  \lim_{h\downarrow 0}\left[ \sup_{n,\,m\in \N} \left[ \int_0^{T-h}W_{Q+L,1}(\rho^{n,m}(t+h),\rho^{n,m}(t))~ {\d}t \right] \right] = 0.
\end{equation}
\end{proposition}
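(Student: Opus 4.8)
The plan is to adapt the proof of Proposition~\ref{pro:time_continuity} to the present setting. The new feature is that $t\mapsto X_{\rho^{n,m}(t,\cdot)}$ is no longer continuous: on each interval $(k\,\tau_m,(k+1)\,\tau_m]$ it has the same piecewise‑affine‑in‑$z$ structure as in the Cauchy case, but at each rearrangement time $k\,\tau_m$ it jumps. Accordingly I would split $W_{Q+L,1}(\rho^{n,m}(t+h,\cdot),\rho^{n,m}(t,\cdot))$, via the triangle inequality applied to the pre‑ and post‑rearrangement states at each $k\,\tau_m\in(t,t+h]$, into a sum of a ``continuous'' part (which is Lipschitz in time, uniformly in $n,m$) and a finite number of rearrangement jumps $J(k)$, and then estimate each piece separately.

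For the continuous part: on $(k\,\tau_m,(k+1)\,\tau_m]$ the pseudo‑inverse reads $X_{\rho^{n,m}(t,\cdot)}(z)=\sum_{i=-N}^{n-1}\bigl[x_i(t)+(z-z_i)\,R_i^k(t)^{-1}\bigr]\,\mathbf{1}_{[z_i,z_{i+1})}(z)$, with $z_{-N}=0$, $z_{-N+1}=q_n$, and $z_{i+1}-z_i=\ell_n$ for $i\ge-N+1$. Hence $z\mapsto\partial_t X_{\rho^{n,m}(t,\cdot)}(z)$ is the piecewise‑affine interpolant at the mass nodes $z_i$ of the particle velocities $\dot x_i^k(t)=v(R_i^k(t))$ and $\dot x_n^k(t)=v(\bar\rho_1^k)$, all of which lie in $[0,v_{\max}]$. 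Thus $0\le\partial_t X_{\rho^{n,m}(t,\cdot)}(z)\le v_{\max}$ for a.e.\ such $(t,z)$, and arguing exactly as in Proposition~\ref{pro:time_continuity} yields $W_{Q+L,1}(\rho^{n,m}(t,\cdot),\rho^{n,m}(s,\cdot))\le(Q+L)\,v_{\max}\,(t-s)$ whenever $k\,\tau_m< s\le t\le(k+1)\,\tau_m$.

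The core of the proof is the estimate of the jump $J(k)\doteq W_{Q+L,1}\bigl(\rho^{n,m}((k\,\tau_m)^+,\cdot),\rho^{n,m}((k\,\tau_m)^-,\cdot)\bigr)$. Because the $k$-th rearrangement leaves unchanged the positions of all particles in $\bar\Omega$, of the rightmost particle in $(-\infty,0]$ and of the leftmost in $[1,\infty)$, and leaves the mass carried by each particle unchanged, $X^+$ and $X^-$ coincide off the mass coordinates of the re‑spaced particles $\{-N,\ldots,-h_0^k-2\}\cup\{n-h_1^k+2,\ldots,n\}$, and there they differ by at most the displacements $|\bar x_i^k-x_i(k\,\tau_m)|$. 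Writing each displacement as a telescoping sum anchored at the (fixed) front particle $x_{-h_0^k-1}$, one finds on the queuing side $|\bar x_i^k-x_i(k\,\tau_m)|\le\frac{\ell_n}{\delta^2}\sum_{j=i}^{-h_0^k-2}|R_j^{k-1}(k\,\tau_m)-\bar\rho_0^k|$, using the density bounds $\delta\le R_i^k\le R$ from \eqref{eq:estB}, and the symmetric bound with $\bar\rho_1^k$ on the exit side. Summing the $\ell_n$‑weighted contributions over $i$, the queuing part of $J(k)$ is dominated by $\frac{\ell_n^2 N}{\delta^2}\sum_j|R_j^{k-1}(k\,\tau_m)-\bar\rho_0^k|$. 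I would then bound $|R_j^{k-1}(k\,\tau_m)-\bar\rho_0^k|\le|R_j^{k-1}(k\,\tau_m)-\bar\rho_0^{k-1}|+|\bar\rho_0^{k-1}-\bar\rho_0^k|$, use that $R_j^{k-1}((k-1)\,\tau_m)=\bar\rho_0^{k-1}$ for every queuing index $j$ (the deep queue is exactly equidistant after the previous rearrangement), and control the increment through the ODE $\dot R_j^{k-1}=-\ell_n^{-1}(R_j^{k-1})^2\,[v(R_{j+1}^{k-1})-v(R_j^{k-1})]$ integrated over $((k-1)\,\tau_m,k\,\tau_m)$ against the uniform $\BV$ bound $\sum_i|R_{i+1}^{k-1}(s)-R_i^{k-1}(s)|\le C$ established above. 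Combined with $\ell_n N\le Q+\ell_n$ (and the analogue $\ell_n\,h_1^k\le L+Q+\ell_n$ on the exit side, the particles crossing $x=1$ during the step being absorbed since $\ell_n$ times the total number of crossings is $\le L+Q+\ell_n$), this gives
\[
J(k)\le C_1\,\tau_m+C_2\,\bigl(|\bar\rho_0^k-\bar\rho_0^{k-1}|+|\bar\rho_1^k-\bar\rho_1^{k-1}|\bigr),
\]
with $C_1,C_2$ depending only on $v_{\max}$, $\lip[v]$, $\delta$, $R$, $Q$, $L$ and the $\BV$ constant $C$, hence independent of $n,m,k$.

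Finally I would assemble the two ingredients. For $0<h<T$, inserting the pre‑ and post‑rearrangement states at every rearrangement time in $(t,t+h]$ and using the triangle inequality together with the Lipschitz bound gives $W_{Q+L,1}(\rho^{n,m}(t+h,\cdot),\rho^{n,m}(t,\cdot))\le(Q+L)\,v_{\max}\,h+\sum_{k:\,t<k\,\tau_m\le t+h}J(k)$. Integrating in $t$ over $[0,T-h]$, each $J(k)$ is counted only for $t$ in an interval of length $\le h$, so
\[
\int_0^{T-h}W_{Q+L,1}\bigl(\rho^{n,m}(t+h,\cdot),\rho^{n,m}(t,\cdot)\bigr)\,\d t\le(Q+L)\,v_{\max}\,h\,T+h\sum_{k=1}^{m-1}J(k),
\]
and by the bound on $J(k)$ together with $\sum_{k=1}^{m-1}\tau_m\le T$ and $\sum_{k=1}^{m-1}|\bar\rho_i^k-\bar\rho_i^{k-1}|\le\tv(\bar\rho_i)$ (consequences of \eqref{B1}), the sum $\sum_{k=1}^{m-1}J(k)$ is bounded by a constant independent of $n$ and $m$. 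The right‑hand side is therefore $\le\tilde C\,h$ with $\tilde C$ independent of $n,m$, and taking $\sup_{n,m}$ and then $h\downarrow 0$ yields \eqref{eq:time_continuity}, i.e.\ \eqref{Bbis}. I expect the main obstacle to be the jump estimate: a crude bound on the displacement of a queuing particle far from $x=0$ is only $O(1)$, which summed over the $\sim m$ rearrangements diverges; the resolution is to combine the telescoping structure with the $\BV$ bound — trading one factor of $N$ for the $\BV$ constant — and then to use $\ell_n N\le Q+\ell_n$ so that the per‑step jump is $O(\tau_m)$ up to the genuine boundary variation $|\bar\rho_i^k-\bar\rho_i^{k-1}|$, whose sum over $k$ is finite. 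Bookkeeping of the particles crossing $x=0$ or $x=1$ within a single time step is a further, but routine, technical point.
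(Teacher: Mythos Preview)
Your proposal is correct and reaches the same conclusion as the paper, but it takes a genuinely different route in the two nontrivial steps.

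For the jump estimate $J(k)$, the paper does not invoke the $\BV$ bound from the preceding proposition at all. Instead it compares the post-rearrangement position $x_i((k\,\tau_m)^+)$ with the post-rearrangement position at the \emph{previous} step $x_i(((k-1)\,\tau_m)^+)$, using the explicit rearrangement formula on both sides, and then controls the drift $|x_i(((k-1)\,\tau_m)^+)-x_i((k\,\tau_m)^-)|$ by $v_{\max}\,\tau_m$. This converts the problem into bounding $|(\bar{\rho}_i^{k})^{-1}-(\bar{\rho}_i^{k-1})^{-1}|$, which is where the Lipschitz assumption in \eqref{B1} enters directly, yielding $J(k)\le C\,\tau_m$ with $C=C(\delta,\rho_{\max},v_{\max},T,\bar\rho,\bar\rho_0,\bar\rho_1)$. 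Your approach instead anchors at the fixed front particle, telescopes the spacings, integrates the ODE for $R_j^{k-1}$, and closes the estimate using the uniform $\BV$ bound; it yields the weaker-looking $J(k)\le C_1\,\tau_m+C_2\,(|\bar\rho_0^k-\bar\rho_0^{k-1}|+|\bar\rho_1^k-\bar\rho_1^{k-1}|)$, which is however sufficient once summed and in fact only uses $\bar\rho_0,\bar\rho_1\in\BV$ rather than Lipschitz.

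For the final assembly, the paper splits into the cases $\tau_m<h$ and $\tau_m\ge h$, obtaining in the first case the stronger Lipschitz-type bound \eqref{B} and in the second a direct estimate of the integral. Your argument---integrate in $t$ and observe that each $J(k)$ is counted on a $t$-interval of length at most $h$---is more economical and avoids the case distinction; it also makes transparent why summability of $\sum_k J(k)$ (rather than a uniform-in-$k$ bound) is all that is needed for \eqref{Bbis}.
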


\begin{proof}
For simplicity we drop the indexes $n,m$ in the notation and use $W_1$ instead of $W_{Q+L,1}$.
The above Wasserstein distance is computed via the pseudo-inverse variable
\begin{align*}
  &X_{\rho(t,\cdot)}(z)=
  \Bigl[x_{-N}(t) + z \, R_{-N}(t)^{-1}\Bigr]\mathbf{1}_{[0,q_n)}(z)
  +\!\sum_{i=-N+1}^{n-1} \Bigl[x_{i}(t) + \Bigl(z- \bigl(q_n+(i+N-1) \, \ell_n \bigr) \Bigr) R_{i}(t)^{-1}\Bigr] \mathbf{1}_{[q_n+(i+N-1) \, \ell_n,q_n+(i+N) \, \ell_n)}(z).
\end{align*}
We recall that, for all $t\geq 0$, $X_{\rho(t,\cdot)}$ is a strictly increasing function on $[0,Q+L]$.

For $k\,\tau_m<s<t<(k+1) \, \tau_m$ we compute
\begin{align}
&~ W_1(\rho(t,\cdot),\rho(s,\cdot))
=  \|X_{\rho(t,\cdot)}-X_{\rho(s,\cdot)}\|_{\L1([0,Q+L])}
= \int_0^{q_n}\left|x_{-N}(t)-x_{-N}(s) + z \, (R_{-N}(t)^{-1}-R_{-N}(s)^{-1})\right|~ {\d}z
\nonumber\\
&~ + \sum_{i=-N+1}^{n-1} \int_{0}^{\ell_n} \, \left|x_i(t)-x_i(s) + z \, \bigl(R_i(t)^{-1} - R_i(s)^{-1}\bigr)\right|~ {\d}z
\leq q_n \, |x_{-N}(t)-x_{-N}(s)|+ |R_{-N}(t)^{-1}-R_{-N}(s)^{-1}| \, \int_0^{q_n}z ~ {\d}z
\nonumber\\
&~ +\sum_{i=-N+1}^{n-1} \ell_n \, |x_i(t)-x_i(s)|
+ \sum_{i=-N+1}^{n-1} \left|R_i(t)^{-1} - R_i(s)^{-1}\right| \int_{0}^{\ell_n} z ~ {\d}z
\nonumber\\
\leq&~ (Q+L) \, v_{\max} \, |t-s|
+ \frac{q_n^2}{2}\int_s^t\left|\frac{\d}{\d\tau}\left[\frac{1}{R_{-N}(\tau)}\right]\right| {\d}\tau+  \sum_{i=-N+1}^{n-1} \frac{\ell_n^2}{2} \int_{s}^{t}\left|\frac{\d}{\d\tau}\left[\frac{1}{R_i(\tau)}\right]\right| {\d}\tau
\nonumber\\=&~
(Q+L) \, v_{\max} \, |t-s| + \frac{q_n}{2} \int_{s}^{t} |v(R_{-N+1}(\tau))-v(R_{-N}(\tau))| \, \d\tau
+\sum_{i=-N}^{n-2} \frac{\ell_n}{2} \int_{s}^{t} |v(R_{i+1}(\tau))-v(R_i(\tau))| \, \d\tau
\nonumber\\
&~
+\frac{\ell_n}{2} \int_{s}^{t} |v(\bar{\rho}_1^k) - v(R_{n-2}(\tau))| \, \d\tau
\leq
\frac{3}{2}(Q +L) \, v_{\max} \, (t-s).
\label{FabioAru}
\end{align}
As a consequence of the above computation, the curve $[0,T]\ni t\mapsto \rho^{n,m}(t,\cdot)$ is equi-continuous in the $W_1$-topology on open intervals of the form $(k\,\tau_m,(k+1) \, \tau_m)$ and
\begin{align}\label{eq:continuity_open}
&W_1(\rho(((k+1) \, \tau_m)^-,\cdot),\rho((k \, \tau_m)^+,\cdot)) \le C \, \tau_m,
&k \in \N,
\end{align}
where $C$ is some positive constant independent of $n$, $m$, and $h$.
On the other hand, due to the rearrangements of the particles outside $\Omega$ at each time step, such curve may feature a jump discontinuity.
Let $t = (k+1)\,\tau_m$.
We estimate the jump
\begin{align}
&~ W_1(\rho(t^+,\cdot),\rho(t^-,\cdot)) =  \|X_{\rho(t^+,\cdot)}-X_{\rho(t^-,\cdot)}\|_{\L1([0,L])}
\nonumber\\&~
\leq q_n \, |x_{-N}(t^+)-x_{-N}(t^-)|
+ \sum_{i=-N+1}^{-h_0^{k+1}-2} \ell_n \, |x_i(t^+)-x_i(t^-)| +\sum_{i=n-h_1^{k+1}+2}^{n-1} \ell_n \, |x_i(t^+)-x_i(t^-)|
\nonumber\\&~
+ \left|(\bar{\rho}_0^{k+1})^{-1} - R_{-N}(t^-)^{-1}\right| \frac{\ell_n^2}{2}+ \sum_{i=-N+1}^{-h_0^{k+1}-2} \left|(\bar{\rho}_0^{k+1})^{-1} - R_i(t^-)^{-1}\right|  \frac{\ell_n^2}{2}
+ \sum_{i=n-h_1^{k+1}+1}^{n-1} \left|(\bar{\rho}^{k+1}_1)^{-1}-R_{i}(t^-)^{-1})\right|\frac{\ell_n^2}{2},
\label{eq:dirichlet_continuity1}
\end{align}
where we have use the fact that $t \mapsto R_i(t)^{-1}$ is continuous for all $i \in \{-h_0^{k+1}-1,\ldots,n-h_1^{k+1}\}$.
We claim that for any $i \in \{-N,\ldots,-h_0^{k+1}-2\}$ we have the estimate
\begin{equation}
|x_i(t^+)-x_i(t^-)|
\leq
\left[2 \, v_{\max} + \frac{Q}{\delta^2} \, \lip(\bar{\rho}_0)\right] \tau_m .\label{eq:dirichlet_continuity2}
\end{equation}
Indeed, for any $i \in \{-N+1,\ldots,-h_0^{k+1}-2\}$ we have
\begin{align*}
&~ |x_i(t^+)-x_i(t^-)|\leq |x_i(t^+)-x_i((t-\tau_m)^+)| + |x_i((t-\tau_m)^+)-x_i(t^-)|
\\=&~
\left|x_{-h_0^{k+1}-1}(t) + (i+h_0^{k+1}+1) \, \frac{\ell_n}{\bar{\rho}_0^{k+1}}
-x_{-h_0^{k+1} -1}((t-\tau_m)^+) - (i+h_0^{k+1}+1) \, \frac{\ell_n}{\bar{\rho}^k_0}\right|
+\int_{t-\tau_m}^t v(R_i(s))~ {\d}s
\\\le&~
\int_{t-\tau_m}^t v(R_{-h_0^{k+1}-1}(s))~ {\d}s
+\ell_n \left|i+h_0^{k+1}+1\right| \left|\frac{1}{\bar{\rho}^{k+1}_0}-\frac{1}{\bar{\rho}^{k}_0}\right|
+\int_{t-\tau_m}^t v(R_{i}(s))~ {\d}s
\\\le&~
2 \, \tau_m \, v_{\max} + Q \left|\frac{1}{\bar{\rho}^{k+1}_0}-\frac{1}{\bar{\rho}^{k}_0}\right|
\le \left[2 \, v_{\max} + \frac{Q}{\delta^2} \, \lip(\bar{\rho}_0)\right] \tau_m,
\end{align*}
and analogously
\begin{align*}
&~ |x_{-N}(t^+)-x_{-N}(t^-)| \leq |x_{-N}(t^+)-x_{-N}((k\,\tau_m)^+)| + |x_{-N}((k\,\tau_m)^+)-x_{-N}(t^-)|
\\=&~
\Biggl|x_{-h_0^{k+1}-1}(t)
+ (i+h_0^{k+1}+1) \, \frac{\ell_n}{\bar{\rho}_0^{k+1}}
- \frac{q_n}{\bar{\rho}_0^{k+1}}
-x_{-h_0^{k+1} -1}((t-\tau_m)^+)
- (i+h_0^{k+1}+1) \, \frac{\ell_n}{\bar{\rho}^k_0}
+ \frac{q_n}{\bar{\rho}_0^{k}}\Biggr|
+\int_{t-\tau_m}^t v(R_i(s))~ {\d}s
\\\le&~
\int_{t-\tau_m}^t \left[ v(R_{-h_0^{k+1}-1}(s)) + v(R_{i}(s)) \right] {\d}s
+\left[ \ell_n \left|i+h_0^{k+1}+1\right| + q_n \right] \left|\frac{1}{\bar{\rho}^{k+1}_0}-\frac{1}{\bar{\rho}^{k}_0}\right|
\\\le&~
2 \, \tau_m \, v_{\max} + Q \left|\frac{1}{\bar{\rho}^{k+1}_0}-\frac{1}{\bar{\rho}^{k}_0}\right|
\le \left[2 \, v_{\max} + \frac{Q}{\delta^2} \, \lip(\bar{\rho}_0)\right] \tau_m.
\end{align*}
Moreover, for all $i \in \{-N+1,\ldots,-h_0^{k+1}-2\}$,
\begin{align}
  \left|(\bar{\rho}_0^{k+1})^{-1} - R_i(t^-)^{-1}\right|
  \leq&~
  \left|(\bar{\rho}_0^{k+1})^{-1}-(\bar{\rho}_0^{k})^{-1}\right| + \left|(R_i((t-k\,\tau_m)^+)^{-1}-R_i(t^-)^{-1}\right|
  \nonumber\\\leq&~
  \left|(\bar{\rho}_0^{k+1})^{-1}-(\bar{\rho}_0^{k})^{-1}\right| + \frac{1}{\ell_n}\int_{t-\tau_m}^t |v(R_{i+1}(s))-v(R_i(s))|~ {\d}s
  \le
  \left[\frac{v_{\max}}{\ell_n} + \frac{1}{\delta^2} \, \lip(\bar{\rho}_0)\right] \tau_m
  ,\label{eq:dirichlet_continuity3}
\end{align}
and the same estimate holds for $i=-N$ with $q_n$ replacing $\ell_n$.
For any $i \in \{n-h_1^{k}+2,\ldots,n-1\}$, we estimate
\begin{align}
  &~|x_i(t^+)-x_i(t^-)|\leq |x_i(t^+)-x_i((t-\tau_m)^+)| + |x_i((t-\tau_m)^+)-x_i(t^-)|
  \nonumber\\=&~
  \Biggl|x_{n-h_1^{k+1}+1}(t)+(i-n+h_1^{k+1}-1) \, \frac{\ell_n}{\bar{\rho}_1^{k+1}}
  -x_{n-h_1^k+1}(t-\tau_m)-(i-n+h_1^k-1) \, \frac{\ell_n}{\bar{\rho}^k_1}\Biggr|
  +\int_{t-\tau_m}^t v(R_i(s))\d s
  \nonumber\\\leq&~
  \left|x_{n-h_1^{k+1}+1}(t) - x_{n-h_1^k+1}(t-\tau_m)\right|
  +(Q+L)\left|\frac{1}{\bar{\rho}^{k+1}_1}-\frac{1}{\bar{\rho}^{k}_1}\right|
  +\left(h_1^{k+1}-h_1^k\right) \frac{\ell_n}{\bar{\rho}^{k+1}_1}+ \tau_m \, v_{\max}
  \nonumber\\\leq&~
  \left|x_{n-h_1^{k+1}+1}(t) - x_{n-h_1^{k+1}+1}(t-\tau_m)\right|
  +\sum_{i=n-h_1^{k+1}+1}^{n-h_1^k} \left|x_{i+1}(t-\tau_m)-x_i(t-\tau_m)\right|
  +(Q+L) \, \frac{\tau_m}{\delta^2} \, \lip(\bar{\rho}_1)
  +\frac{\tau_m}{\delta} \, v_{\max} \, \rho_{\max}
  + \tau_m \, v_{\max}
  \nonumber\\\leq&~
  \left[\frac{Q+L}{\delta^2} \, \lip(\bar{\rho}_1)
  +2 \, \frac{v_{\max} \, \rho_{\max}}{\delta}
  +2 \, v_{\max}\right] \tau_m,\label{eq:dirichlet_continuity_3bis}
\end{align}
where we have used the minimum principle $R_i(t)\geq \delta$ for all $t\geq 0$ given in Lemma~\ref{lem:maximum}, and (twice) the estimate
\[h_1^{k+1}-h_1^k \leq \frac{\tau_m \, v_{\max}}{\ell_n/\rho_{\max}},\]
which expresses the fact that the total number of particles crossing a given point on a time interval of size $\tau_m$ is bounded by the maximum distance covered, i.e.\ $\tau_m \, v_{\max}$, divided by the smallest possible distance between two consecutive vehicles, i.e.\ $\ell_n/\rho_{\max}$.
Finally, by a similar procedure as in \eqref{eq:dirichlet_continuity3}, we estimate for $i \in \{n-h_1^{k+1}+1,\ldots,n-1\}$
\begin{align*}
  &~\Bigl|(\bar{\rho}^{k+1}_1)^{-1}-R_i(t^-)^{-1}\Bigr|
  \leq
  \Bigl|R_i(t^-)^{-1}-R_i((t-\tau_m)^+)^{-1}\Bigr| + \Bigl|R_i((t-\tau_m)^+)^{-1}- (\bar{\rho}^{k+1}_1)^{-1}\Bigr|
  \\\leq&~
  \frac{1}{\ell_n}\int_{t-\tau_m}^{t} \Bigl|v(R_{i+1}(s))-v(R_i(s))\Bigr| {\d}s
  + \Bigl|R_i((t-\tau)^+)^{-1}- (\bar{\rho}^{k+1}_1)^{-1}\Bigr|
  \leq
  \frac{v_{\max}}{\ell_n} \, \tau_m
  + \Bigl|R_i((t-\tau)^+)^{-1}- (\bar{\rho}^{k+1}_1)^{-1}\Bigr|.
\end{align*}
Now the last term on the right-hand-side of the above last estimate can be controlled in the case $i\geq n-h_1^k+1$ by
\begin{equation}\label{eq:dirichlet_continuity2bis_intermediate}
\Bigl|R_i((t-\tau)^+)^{-1}- (\bar{\rho}^{k+1}_1)^{-1}\Bigr|
=
\Bigl|(\bar{\rho}^{k+1}_1)^{-1}-(\bar{\rho}^{k}_1)^{-1}\Bigr|
\le
\frac{1}{\delta^2} \, \lip(\bar{\rho}_1) \, \tau_m,
\end{equation}
while in the case $i<n-h_1^k+1$ by
\begin{equation}
\Bigl|R_i((t-\tau)^+)^{-1}- (\bar{\rho}^{k+1}_1)^{-1}\Bigr|
\leq
\sum_{j=n-h_1^{k+1}+1}^i \Bigl|R_{j+1}((t-\tau)^+)^{-1}-R_{j}((t-\tau)^+)^{-1} \Bigr|
\leq
\left(h_1^{k+1}-h_1^k\right) \frac{\rho_{\max}}{\delta^2}
\leq
\frac{v_{\max} \, \rho_{\max}^2}{\ell_n \, \delta^2} \, \tau_m.
\label{eq:dirichlet_continuity2bis}
\end{equation}
Hence, substituting \eqref{eq:dirichlet_continuity2}, \eqref{eq:dirichlet_continuity3}, \eqref{eq:dirichlet_continuity_3bis}, \eqref{eq:dirichlet_continuity2bis_intermediate} and \eqref{eq:dirichlet_continuity2bis} into \eqref{eq:dirichlet_continuity1}, using $q_n\leq \ell_n$ and the arbitrariness of $t = (k+1)\,\tau_m$, we can easily find a positive constant $C = C(\delta,\rho_{\max},v_{\max},T,\bar{\rho},\bar{\rho}_0,\bar{\rho}_1) \ge 0$ such that
\begin{align}
  &~ W_1(\rho(t^+,\cdot),\rho(t^-,\cdot))\leq C \, \tau_m,
  &t \in (\N+1)\,\tau_m.\label{eq:continuity_jump}
\end{align}
Now, we use the two estimates \eqref{eq:continuity_open} and \eqref{eq:continuity_jump} to obtain \eqref{eq:time_continuity}.
Let $h>0$ be fixed.
Let $t\in [0,T-h]$ and assume for simplicity that $t,t+h\not \in \{k\tau_n\}_{k=0}^{m-1}$.
We first assume $\tau_m<h$.
More precisely, let $t\in (k \, \tau_m,(k+1)\tau_m)$ and $t+h\in (r \, \tau_m,(r+1) \, \tau_m)$ for some $k<r<m$.
We have
\begin{align*}
   W_1(\rho(t+h),\rho(t))\leq&~
   W_1\bigl(\rho(t+h),\rho((r \, \tau_m)^+)\bigr)
   +W_1\bigl(\rho((r \, \tau_m)^+),\rho((r \, \tau_m)^-)\bigr)
   + W_1\bigl(\rho(((k+1)\tau_m)^-),\rho(t)\bigr)
   \\&~
   +\sum_{j=k+1}^{r-1} \Bigl[
   W_1\bigl(\rho((j+1) \, \tau_m)^-,\rho(j \, \tau_m)^+\bigr)
   + W_1\bigl(\rho(j \, \tau_m)^+,\rho((j \, \tau_m)^-\bigr)
   \Bigr]
   \le
   2 \, C \, \tau_m + 2 \, C \, (r-k-1) \, \tau_m + C \, \tau_m
   \\\le&~
   C  \, [2 \, (r-k) + 1] \, \tau_m
   \le 5 \, C \, h,
\end{align*}
because by assumption $\tau_m<h$ and $(r-k) \tau_m \leq h + \tau_m \leq 2\,h$.
Since $C$ is some positive constant independent of $n$, $m$, and $h$, we have \eqref{B}, hence \eqref{eq:time_continuity}.
Let us now assume $\tau_m\geq h$.
In this case we have by \eqref{FabioAru} and \eqref{eq:continuity_jump}
\begin{align*}
&~\int_0^{T-h} W_1(\rho(t+h),\rho(t))~ {\d}t =
\\=&~ \int_{(m-1)\tau_m}^{T-h} W_1(\rho(t+h),\rho(t))~ {\d}t
+\sum_{k=1}^{m-1} \left[\int_{(k-1)\tau_m}^{k \tau_m-h} W_1(\rho(t+h),\rho(t))~ {\d}t + \int_{k \, \tau_m-h}^{k \tau_m} W_1(\rho(t+h),\rho(t))~ {\d}t\right]
\\\le&~
\int_{(m-1)\tau_m}^{T-h} W_1(\rho(t+h),\rho(t))~ {\d}t
+\sum_{k=1}^{m-1} \int_{(k-1)\tau_m}^{k \tau_m-h} W_1(\rho(t+h),\rho(t))~ {\d}t
\\&~
+\sum_{k=1}^{m-1} \Biggl[\int_{k \, \tau_m-h}^{k \tau_m}
\Bigl(W_1(\rho(t+h),\rho((k \, \tau_m)^+))
+ W_1(\rho((k \, \tau_m)^+),\rho((k \, \tau_m)^-))
+ W_1(\rho((k \, \tau_m)^-),\rho(t)) \Bigr)~ {\d}t\Biggr]
\\\le&~
\frac{3}{2}(Q + L ) \, v_{\max} \, h \, \tau_m
+ 3 \sum_{k=1}^{m-1} \left[\frac{3}{2}(Q + L ) \, v_{\max} \, h \, \tau_m\right]
+ \sum_{k=1}^{m-1} \left[C \, h \, \tau_m\right]
\le
\left[\frac{9}{2}(Q + L ) \, v_{\max} + C\right] T \, h,
\end{align*}
for some positive constant $C$ independent of $n$, $m$, and $h$.
Hence, \eqref{eq:time_continuity} is proven.
\end{proof}

In order to conclude the proof of Theorem~\ref{thm:main2}, we can proceed exactly as in Theorem~\ref{thm:main_LWR} by using Theorem~\ref{thm:aubin}. We observe that condition \eqref{Bbis} of Theorem~\ref{thm:aubin} is used in this case in order to get a uniform continuity estimate in time.

\subsection{Convergence to entropy solutions}\label{subsec:consistency}

In this subsection we briefly point out that the scheme introduced in Subsection~\ref{subsec:scheme} is consistent in some simple cases.

\begin{theorem}\label{thm:minor}
  Assume \eqref{I3} and \eqref{V1}.
  If $\bar{\rho}_0$ and $\bar{\rho}_1$ are constant and
  \begin{itemize}
    \item either also $\bar{\rho}$ is constant,
    \item or $f'(\bar{\rho}_0(t))<0$ and $f'(\bar{\rho}_1(t))>0$ for all $t\ge 0$,
  \end{itemize}
then $(\rho^{n,m})_{n,m}$ converges (up to a subsequence) to the unique entropy solution to the IBVP \eqref{eq:ibvp} in the sense of Definition~\ref{def:entropy_sol_lef}.
\end{theorem}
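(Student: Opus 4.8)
The plan is to verify that the limit $\rho$ provided by Theorem~\ref{thm:main2} satisfies the three requirements in Definition~\ref{def:entropy_sol_lef}; since the entropy solution to \eqref{eq:ibvp} is unique, this identifies the limit of every convergent subsequence and gives the claim. The first requirement, namely the interior Kruzhkov entropy inequality together with the initial datum term, does not use the special structure assumed in the theorem: a test function $\phi\in\Cc\infty(\R_+\times\Omega)$ is compactly supported in the open strip, hence it sees none of the ``boundary'' particles, so expanding the discrete entropy production of $\rho^{n,m}$ by means of the ODEs \eqref{eq:FTLB} and the minimum--maximum principle \eqref{eq:estB}, exactly as in Step~2 of the proof of Theorem~\ref{thm:main_LWR}, and letting $n\to\infty$ yields the inequality, the term $\int_\Omega|\bar\rho-k|\,\phi(0,x)\,\d x$ arising from $\rho^{n,m}(0,\cdot)\to\bar\rho$ in $\L1(\Omega)$. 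Hence the whole content of the statement is the verification of the two trace conditions, and by the left--right symmetry of the scheme it suffices to discuss the boundary $x=0$.

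The main tool is a localisation principle. Fix a small window $(-\varepsilon,\varepsilon)$ around $x=0$ and a time slab $[k\tau_m,(k+1)\tau_m)$. On that slab the ghost particles in $\{x<0\}$ carry, by the re-equidistribution performed at $t=k\tau_m$, the constant density $\bar\rho_0^k=\bar\rho_0$, and, since the Lagrangian speeds are bounded by $v_{\max}$ and \eqref{eq:LWR} has finite speed of propagation, the values of $\rho^{n,m}$ on $(-\varepsilon,\varepsilon)\times[k\tau_m,(k+1)\tau_m)$ are governed, up to a vanishing $\L1$ error, only by the particles lying in an $O(v_{\max}\tau_m)$-neighbourhood of that window. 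Truncating the interior part of the configuration outside this neighbourhood (which perturbs the dynamics by an amount vanishing in $\L1$), one is exactly in the situation of the Cauchy FTL scheme of Section~\ref{sec:LWRscheme} with a Riemann-type initial datum whose left state is $\bar\rho_0$. Theorem~\ref{thm:main_LWR} and uniqueness for the Cauchy problem then identify $\rho$, in the limit, on $(-\varepsilon,\varepsilon)\times[k\tau_m,(k+1)\tau_m)$ with the entropy solution of the Riemann problem with left datum $\bar\rho_0$ and right datum $\rho(k\tau_m^+,0^+)$ --- precisely the heuristics of Remark~\ref{rem:rem}.

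It then remains to check that, for a.e.\ $\tau$, the trace $\rho(\tau,0^+)$ so produced is admissible, i.e.\ that the Riemann problem with left state $\bar\rho_0$ and right state $\rho(\tau,0^+)$ carries no wave of positive speed (equivalently $u(1,x)\equiv\rho(\tau,0^+)$ for $x>0$). This is the point where the two hypotheses enter, through the standard Dubois--LeFloch / Bardos--LeRoux--N\'ed\'elec boundary analysis \cite{dubois_lefloch,ColomboRosiniboundary,BardosleRouxNedelec}. When $\bar\rho$ is constant the trace obtained on the first slab is the value at $0^+$ of the Riemann solver applied to $(\bar\rho_0,\bar\rho)$; such a value is a fixed point of the Riemann-solver projection at $x=0$ (its waves being exactly the negative-speed sub-portion of the $(\bar\rho_0,\bar\rho)$-fan), so the reconstruction at each subsequent step reproduces the same constant, admissible trace; the same argument applies at $x=1$, and, together with Remark~\ref{rem:zero_dirichlet}, $\rho$ is the restriction to $\Omega$ of the corresponding Cauchy entropy solution, which is the entropy solution of \eqref{eq:ibvp}. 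When instead $f'(\bar\rho_0(t))<0$ and $f'(\bar\rho_1(t))>0$ for all $t$, the outgoing sign of the characteristic speed at the boundary states forces the Riemann solver with left state $\bar\rho_0$ (respectively right state $\bar\rho_1$ at $x=1$) to return an admissible trace for \emph{every} interior state the scheme can produce --- again because the selected value sits on the negative-speed (respectively positive-speed) portion of the corresponding wave fan --- so the second and third conditions of Definition~\ref{def:entropy_sol_lef} hold.

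The hard part is the localisation step. Because the scheme re-equidistributes the particles outside $\Omega$ at every time $k\tau_m$, the curve $t\mapsto\rho^{n,m}(t,\cdot)$ is equi-continuous in $W_{Q+L,1}$ only in the time-integrated sense \eqref{Bbis}: the jump sizes \eqref{eq:continuity_jump} add up to $O(T)$, not to $o(1)$, so one cannot simply dismiss the rearrangements as negligible. They must instead be confined: the rearrangement at $t=k\tau_m$ leaves the right-most ghost particle (and the left-most outgoing particle at $x=1$) in place and only displaces particles in a region where, by finite speed of propagation and because the limiting density there is the constant boundary value, it does not affect $\rho$ on any fixed window about $\partial\Omega$; this is what makes the comparison with the Cauchy FTL scheme legitimate. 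A secondary, purely technical complication is that under \eqref{V1} alone the flux $f$ need not be concave, so ``self-similar Lax solution'' and the sign conditions on $f'(\bar\rho_0)$ and $f'(\bar\rho_1)$ must be read through the concave/convex-envelope construction; this leaves the argument unchanged but makes the trace bookkeeping case-heavy.
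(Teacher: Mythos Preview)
Your approach is considerably more elaborate than the paper's, and the hard part you identify is in fact never needed. The paper's proof is a two-line reduction: in both special cases the unique entropy solution to \eqref{eq:ibvp} is simply the restriction to $\Omega$ of the entropy solution to the Cauchy problem \eqref{eq:cauchy} with initial datum $\bar{\rho}_0\,\mathbf{1}_{(-\infty,0)} + \bar{\rho}\,\mathbf{1}_{\Omega} + \bar{\rho}_1\,\mathbf{1}_{(1,\infty)}$, and moreover (by Remark~\ref{rem:zero_dirichlet} and a WFT argument) no update of the boundary data is ever required. Consequently, on every interval $[k\tau_m,(k+1)\tau_m]$ the scheme \eqref{eq:FTLB} is exactly a Cauchy FTL scheme with an extended datum, the rearrangement at $k\tau_m$ affects only the region outside $\Omega$ where the limiting density is anyway the constant boundary value, and Theorem~\ref{thm:main_LWR} gives convergence of $\rho^{n,m}$ to the Cauchy entropy solution for each fixed $m$. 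Restricting to $\Omega$ yields the IBVP entropy solution; the dependence on $m$ is then removed by the continuity with respect to boundary data in \cite{ColomboRosiniboundary}.

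Your route instead tries to verify the three items of Definition~\ref{def:entropy_sol_lef} directly, via a local comparison near $\partial\Omega$ with a Cauchy FTL Riemann problem. This is a legitimate programme for the \emph{general} conjecture after Theorem~\ref{thm:main2}, but for the statement at hand it is overkill, and you yourself flag that the localisation step is not carried out: you only argue heuristically that the rearrangements ``must be confined'' without proving it, and you note that the non-concave flux makes the trace analysis case-heavy. In the special setting of Theorem~\ref{thm:minor} all of this is bypassed by the global identification with the Cauchy problem, which is the key observation you are missing. Your interior entropy argument (Step~2 of Theorem~\ref{thm:main_LWR} adapted to $\phi$ supported in $\Omega$) is fine and indeed stronger than what the paper states in Theorem~\ref{thm:main2}, but it is not what carries the proof here.
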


\begin{proof}
The proof easily follows from the fact that in both cases the unique entropy solution to \eqref{eq:ibvp} on $[0,\tau]$ is the restriction of the solution to the Cauchy problem with initial condition $\bar{\rho}_0 \, \mathbf{1}_{(-\infty,0)} + \bar{\rho} \, \mathbf{1}_{\Omega} + \bar{\rho}_1 \, \mathbf{1}_{(1,\infty)}$.
This can be easily seen via a WFT argument, see e.g.\ \cite{ColomboRosiniboundary} and Remark~\ref{rem:zero_dirichlet}.
Hence, one can restart the Cauchy problem on $[\tau,2\tau]$ with the same construction and proceed iteratively for all times.
The above claim proves that for any fixed $m$, the limit $\rho^m$ of $\rho^{n,m}$ as $n\rightarrow\infty$ is an entropy solution to \eqref{eq:ibvp}.
The assertion then easily follows by the continuity with respect to the boundary conditions proven in \cite[Theorem~2.3.5b]{ColomboRosiniboundary}.
\end{proof}

\section{The Hughes model}\label{sec:hughes}

In this section we apply the Hughes model \cite{Hughes2002} to simulate the evacuation of a one-dimensional corridor $\Omega \doteq (-1,1)$ ending with two exits.
The resulting model is expressed by the following IBVP with Dirichlet boundary conditions
\begin{equation}\label{eq:model}
\begin{cases}
\rho_t - \left[\rho \, v(\rho) \, \frac{\phi_x}{|\phi_x|}\right]_x = 0,&
x \in \Omega,~ t > 0,
\\
|\phi_x|=c(\rho),&
x \in \Omega,~ t > 0,
\\
(\rho,\phi)(t,-1)=(\rho,\phi)(t,1)=(0,0),&
t>0,
\\
\rho(0,x)=\bar{\rho}(x),&
x \in \Omega.
\end{cases}
\end{equation}
We assume that the initial density $\bar{\rho}$ and the velocity map $v$ satisfy \eqref{I1} and \eqref{V1} respectively, where $\rho_{\max}$ is the maximal crowd density and $v_{\max}$ is the maximal speed of a pedestrian.
Let $L \doteq \|\bar{\rho}\|_{\L1(\Omega)}$ and $R \doteq \|\bar{\rho}\|_{\L\infty(\Omega)}$.
The maximum principle in \cite{El-KhatibGoatinRosini} shows that $\rho$ never exceeds the range $[0,R]$.
We assume also what follows.
\begin{gather}
\text{
There exists a $\hat\rho\in (0,\rho_{\max})$ such that $[v(\rho) + \rho \, v'(\rho)] (\hat\rho - \rho) > 0$ for all $\rho \in (0,\rho_{\max})\setminus\{\hat\rho\}$.
}\label{VH}\tag{V3}
\\
\begin{minipage}{.9\textwidth}\centering
$c \colon [0,\rho_{\max}] \to [1,\infty]$ is $\C2$, $c'\ge0$, $c''>0$, $c(0)=1$, and $c(R) < \infty$.
\end{minipage}\label{C}\tag{C}
\end{gather}

\begin{example}
In the literature, see \cite{AmadoriDiFrancesco, AmadoriGoatinRosini, BurgerDiFrancescoMarkowichWolfram, DiFrancescoMarkowichPietschmannWolfram, Hughes2002, Hughes2003, TwarogowskaGoatinDuvigneau}, the usual choice for the cost function is $c(\rho) \doteq 1/v(\rho)$.
In this case, in order to bypass the technical issue of $c$ blowing up at $\rho=\rho_{\max}$, it is assumed that $R \doteq \|\bar{\rho}\|_{\L\infty(\Omega)} \in (0,\rho_{\max})$.
This assumption, together with the maximum principle obtained in \cite{El-KhatibGoatinRosini}, ensures that the cost computed along any solution of \eqref{eq:model} is well defined.
\end{example}

As observed in \cite{AmadoriDiFrancesco, AmadoriGoatinRosini, El-KhatibGoatinRosini}, the differential equations in \eqref{eq:model} can be reformulated as
\begin{align}\label{eq:hughes_reformulated}
&\rho_t + F(t,x,\rho)_x = 0,&
&\int_{-1}^{\xi(t)} c(\rho(t,y)) ~{\d}y = \int_{\xi(t)}^1 c(\rho(t,y)) ~{\d}y,&
&x\in \Omega,\ t>0,
\end{align}
with $F(t,x,\rho) \doteq \mathrm{sign}(x-\xi(t)) \, f(\rho)$, see \cite{DF_fagioli_rosini_russo} for the details.
The form \eqref{eq:hughes_reformulated} clearly suggests that Hughes' model can be seen as a two-sided LWR model, with the turning point $\xi(t)$ splitting the whole interval $\Omega$ into two subintervals. For this reason, under appropriate assumptions that guarantee the presence of a persistent vacuum region around $\xi(t)$, we can apply the results obtained in Section~\ref{sec:LWR} to \eqref{eq:model}. The notion of solution in the case of a vacuum region around $t \mapsto \xi(t)$ is as follows
\begin{definition}\label{def:entropy_solution}
Assume \eqref{I1}, \eqref{V1}, \eqref{VH} and \eqref{C}.
A map $\rho\in \L\infty(\R_+\times\R;[0,R])$ is a (well-separated) \emph{entropy solution} to \eqref{eq:model} if
\begin{itemize}
\item There exists $\varepsilon>0$ such that $\rho$ is equal to zero on the open cone \[\mathcal{C}\doteq\left \{(t,x) \in \R_+ \times \R \colon |x-\bar\xi| < \varepsilon \, t\right \}.\]
  \item $\rho\,\mathbf{1}_{(-\infty,\bar\xi)}$ is the entropy solution to \eqref{eq:cauchy} with initial datum $\bar\rho\,\mathbf{1}_{(-\infty,\bar\xi)}$ in the sense of Definition~\ref{def:entro_sol_LWR}.
  \item $\rho\,\mathbf{1}_{(\bar\xi,\infty)}$ is the entropy solution to \eqref{eq:cauchy} with initial datum $\bar\rho\,\mathbf{1}_{(\bar\xi,\infty)}$ in the sense of Definition~\ref{def:entro_sol_LWR}.
  \item The turning curve $\mathcal{T}\doteq\left \{(t,x) \in \R_+ \times \Omega \colon x=\xi(t)\right \}$ is continuous and contained in $\mathcal{C}$.
  Moreover $(0,\bar{\xi}) \in \mathcal{T}$ and
      \begin{align*}
        &\int_{-1}^{\xi(t)}c(\rho(t,y)) \, {\d} y = \int_{\xi(t)}^1 c(\rho(t,y)) \, {\d} y,
        &\text{for a.e.\ }t\geq 0.
      \end{align*}
\end{itemize}
\end{definition}

The next theorem collects the main existence result obtained in \cite{AmadoriGoatinRosini}.

\begin{theorem}[{\cite[Theorem~3]{AmadoriGoatinRosini}}]
If $v(\rho) \doteq 1-\rho$, $c(\rho) \doteq 1/v(\rho)$ and the initial datum $\bar{\rho} \in \BV(\Omega;[0,1))$ satisfies the estimate $3 \, R + \tv(c(\bar{\rho})) + [c(\bar{\rho}(-1^+))-c(1/2)]_+ + [c(\bar{\rho}(1^-))-c(1/2)]_+ < 2$, then there exists an entropy solution to \eqref{eq:model} defined globally in time.
\end{theorem}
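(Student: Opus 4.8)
The plan is to construct the solution from the two-sided formulation \eqref{eq:hughes_reformulated}. With $v(\rho)=1-\rho$ and $c(\rho)=1/v(\rho)$, extend $\bar\rho$ by $0$ outside $\Omega$; by Definition~\ref{def:entropy_solution} one has to produce a pair $(\rho,\xi)$ such that, on each of the two subintervals cut out by the turning curve, $\rho$ coincides with an LWR entropy solution of \eqref{eq:cauchy} generated by the corresponding half of $\bar\rho$, and $\xi(t)$ solves the weighted balance $\int_{-1}^{\xi(t)}c(\rho(t,y))\,\d y=\int_{\xi(t)}^{1}c(\rho(t,y))\,\d y$. I would approximate by running the follow-the-leader scheme of Section~\ref{sec:LWR} on the two platoons of pedestrians lying on either side of the initial turning point $\bar\xi$, each with the orientation that turns its branch of \eqref{eq:hughes_reformulated} into a standard LWR equation, while at every instant the discrete turning point $\xi^n(t)$ is recomputed from the discrete balance relation, which is strictly increasing in $\xi$ because $c\ge1$; since the discrete densities are uniformly bounded on $[0,R]$ and $c$ is Lipschitz there, $t\mapsto\xi^n(t)$ is Lipschitz with a constant independent of $n$. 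The wave-front tracking construction of \cite{AmadoriGoatinRosini} could be used instead, but the FTL version matches the viewpoint of this chapter.

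The whole argument hinges on showing that the vacuum separating the two platoons never closes, so that no particle ever reaches $x=\xi^n(t)$ and the platoons evolve as two decoupled (suitably oriented) LWR particle systems. Since $f(\rho)=\rho\,(1-\rho)$ attains its maximum at $\rho=1/2$ and $c(1/2)=2$, on the left (resp.\ right) of the turning point the characteristic speed is $2\rho-1$ (resp.\ $1-2\rho$), hence points \emph{away} from $x=\xi^n(t)$ precisely when the traces of the discrete density there are strictly below $1/2$, i.e.\ when $c$ of those traces is strictly below $c(1/2)=2$. I would therefore prove, by induction over the interaction times of the scheme, that the hypothesis $3R+\tv(c(\bar\rho))+[c(\bar\rho(-1^+))-c(1/2)]_++[c(\bar\rho(1^-))-c(1/2)]_+<2$ propagates in time in the form of a Glimm-type functional --- built from $\tv(c(\rho^n(t,\cdot)))$ plus the excesses of $c$ over $c(1/2)$ near the two exits --- that is non-increasing along the scheme and bounded by the left-hand side of the hypothesis, so that $c(\rho^n)<2$, equivalently $\rho^n<1/2$, holds on a whole neighbourhood of $\xi^n(t)$ for every $t\ge0$ and every $n$. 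The term $3R$ is there to absorb the largest density build-up that the waves emitted at the two exits and at the turning point can cause before they are forced to separate; carrying out this bookkeeping uniformly in $n$ and across the moving turning point is the main obstacle of the proof --- one must in particular rule out that a shock travelling toward $\xi^n$, together with an adverse drift of $\xi^n$ itself, pushes a trace past the value $1/2$.

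Once the persistent, well-separated vacuum cone $\mathcal{C}=\{(t,x):|x-\bar\xi|<\varepsilon\,t\}$ is secured, each platoon is an honest FTL approximation of LWR on a half-line, the relevant data satisfy \eqref{I1}, \eqref{V1} and both \eqref{I2} and \eqref{V2} (indeed $\rho\,v'(\rho)=-\rho$ is non-increasing), so Theorem~\ref{thm:main_LWR} gives, up to a subsequence, $\rho^n\to\rho$ in $\Lloc1$ and a.e., with the two restrictions of $\rho$ being entropy solutions of the corresponding Cauchy problems; the uniform Lipschitz bound together with the pointwise convergence $\xi^n(t)\to\xi(t)$ (which follows from the continuity of $c$ and the strict monotonicity of the balance in $\xi$) upgrade, via the Arzel\`a--Ascoli theorem, to $\xi^n\to\xi$ uniformly, so one may pass to the limit in the balance relation and conclude that $(\rho,\xi)$ meets all the requirements of Definition~\ref{def:entropy_solution}; the bound $\rho\in[0,R]$ is the maximum principle of \cite{El-KhatibGoatinRosini}. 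Finally, since the weighted mass in $\Omega$ is lost solely through the exits, the functional of the second paragraph never grows, the above estimates hold up to any $T>0$, and one obtains a globally defined entropy solution.
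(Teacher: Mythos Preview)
This theorem is not proved in the present paper: it is quoted verbatim from \cite{AmadoriGoatinRosini}, where the proof is carried out by the wave-front tracking (WFT) algorithm, not by a follow-the-leader scheme. So there is no ``paper's own proof'' to compare with beyond that citation.

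Your plan is to replace the WFT construction of \cite{AmadoriGoatinRosini} with the FTL particle scheme of Section~\ref{sec:particle}. That is a natural idea in the spirit of this chapter, and indeed the paper does prove an FTL analogue, Theorem~\ref{teo:2}. But note that Theorem~\ref{teo:2} requires the \emph{different} smallness condition \eqref{eq:inequality}, not the one in the statement you are asked to prove. Specialising \eqref{eq:inequality} to $v(\rho)=1-\rho$, $c(\rho)=1/(1-\rho)$ gives
\[
\frac{R}{(1-R)^3}\,\tv(\bar\rho)+\frac{3R}{2(1-R)^2}<1-R,
\]
which is not the same as $3R+\tv(c(\bar\rho))+[c(\bar\rho(-1^+))-2]_++[c(\bar\rho(1^-))-2]_+<2$. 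The latter hypothesis is tailored to the WFT interaction estimates of \cite{AmadoriGoatinRosini}: the Glimm-type functional there decreases precisely because one tracks exact Riemann fronts and their interactions, and the boundary terms $[c(\bar\rho(\pm1^\mp))-c(1/2)]_+$ account for the possible ingoing waves generated at the exits. There is no a priori reason the same functional is monotone along the FTL flow, where densities vary continuously in time and there are no discrete interaction events to induct over.

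You correctly identify this as ``the main obstacle'' and leave it open; that is the genuine gap. Either you revert to the WFT construction of \cite{AmadoriGoatinRosini} (which is what the paper cites), or you accept the stronger hypothesis \eqref{eq:inequality} and invoke Theorem~\ref{teo:2}. Proving the cited theorem under its stated hypothesis via FTL would require new interaction-type estimates for the particle system that are not available in this chapter.
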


In Section~\ref{sec:numerics-Hughes} we show the numerical simulations of our particle methods in simple Riemann-type initial conditions.
We stress here that, although the analytical results concerning our deterministic particle method are restricted to cases in which each particle keeps the same direction for all times, the numerical simulations also cover cases with direction switching.

\subsection{The follow-the-leader scheme and main result}\label{sec:particle}

We now introduce our FTL scheme for \eqref{eq:model}.
Assume \eqref{I1}, \eqref{V1}, \eqref{VH} and \eqref{C}.
Fix $n\in \N$ sufficiently large and set $\ell_n \doteq L/n$.
Let $\bar{x}^n_0,\ldots,\bar{x}^n_n$ be defined recursively by
\[
\begin{cases}
\bar{x}^n_0 \doteq \min\left\{\supp(\bar{\rho})\right\},
\\
\bar{x}^n_i \doteq \inf\left\{ x > \bar{x}^n_{i-1} \colon \int_{\bar{x}^n_{i-1}}^x \bar{\rho}(y) ~{\d}y \geq m\right\},
&i\in\left\{1,\ldots,n\right\}.
\end{cases}
\]
It follows that $-1\le\bar{x}^n_0<\bar{x}^n_1<\ldots<\bar{x}^n_{n-1}<\bar{x}^n_n\le1$ and
\begin{align*}
&\int_{\bar{x}^n_i}^{\bar{x}^n_{i+1}} \bar{\rho}(y) ~{\d}y = \ell_n\leq (\bar{x}^n_{i+1}-\bar{x}^n_i) R,&
&i\in\left\{0,\ldots,n-1\right\}.
\end{align*}
We denote the local discrete initial densities
\begin{align*}
&\bar{R}^n_{i} \doteq \frac{\ell_n}{\bar{x}^n_{i+1}-\bar{x}^n_i} \in (0,R],&
&i\in\left\{0,\ldots,n-1\right\},
\end{align*}
and introduce the discretized initial density $\bar{\rho}^n \colon \R \to [0,\rho_{\max}]$ by
\[
\bar{\rho}^n(x) \doteq \sum_{i=0}^{n-1} \bar{R}^n_{i} \, \mathbf{1}_{[\bar{x}^n_i,\bar{x}^n_{i+1})}(x).
\]
We implicitly define the initial approximate turning point $\bar{\xi}^n \in \Omega$ via the formula
\[
\int_{-1}^{\bar{\xi}^n} c\left(\bar{\rho}^n(y)\right) ~{\d}y =
\int_{\bar{\xi}^n}^1 c\left(\bar{\rho}^n(y)\right) ~{\d}y.
\]

The next step is the definition of the evolving particle scheme.
Roughly speaking, $\bar{\xi}^n$ splits the set of particles into left and right particles, the former moving according to a \emph{backward} FTL scheme, the latter according to a \emph{forward} one.
By a slight modification of the initial condition, we may always assume that
there exists $I_0\in \{0,\ldots,n\}$ such that $\bar{\xi}^n \in (\bar{x}^n_{I_0},\bar{x}^n_{I_0+1})$.
We then set
\begin{equation}\label{eq:FTL-Hughes}
\begin{cases}
\dot{x}^n_0(t)= -v_{\max},\\
\dot{x}^n_i(t)=-v\left(\frac{\ell_n}{x^n_{i}(t)-x^n_{i-1}(t)}\right), &i \in \{1,\ldots, I_0\},\\
\dot{x}^n_i(t)=v\left(\frac{\ell_n}{x^n_{i+1}(t)-x^n_i(t)}\right), &i \in \{I_0+1,\ldots, n-1\},\\
\dot{x}^n_n(t)= v_{\max},\\
x^n_i(0)=\bar{x}^n_i,&i \in \{0,\ldots, n\}.
\end{cases}
\end{equation}
We consider the corresponding discrete densities
\[
\begin{cases}
R^n_{i}(t) \doteq \frac{\ell_n}{x^n_{i+1}(t)-x^n_i(t)},
&i \in \{0,\ldots,n-1\}\setminus\{I_0\},
\\
R^n_{i}(t) \doteq 0,
&i \in \{-1,I_0,n\}.
\end{cases}
\]
Notice that in view of Remark~\ref{rem:zero_dirichlet}, we do not impose any boundary condition in \eqref{eq:FTL-Hughes}, and we follow the movement of each particle whether or not they are in $\Omega$.
Moreover, the density has been set to equal zero outside $[x^n_0(t),x^n_n(t))$ and around the turning point, namely in $[x^n_{I_0}(t),x^n_{I_0+1}(t))$. The latter in particular is simply due to a consistency with the numerical simulations, in which the computation of the turning point is made simpler in this way. This clearly introduces an error $\ell_n$ in the total mass.
Finally, the (unique) solution to the system \eqref{eq:FTL-Hughes} is well defined and the density $R^n_{I_0}(t)$ is equal to zero until the turning point does not collide with a particle.

The approximated turning point $\xi^n(t)$ is implicitly uniquely defined by
\[
\int_{-1}^{\xi^n(t)} c(\rho^n(t,y)) ~{\d}y = \int^1_{\xi^n(t)} c(\rho^n(t,y)) ~{\d}y,
\]
where $\rho^n \colon \R_+\times\R \to [0,\rho_{\max}]$ is the discretized density defined by
\begin{equation}\label{eq:density}
  \rho^n(t,x) \doteq \sum_{i=0}^{n-1} R^n_{i}(t) \, \mathbf{1}_{[x^n_i(t),x^n_{i+1}(t))}(x).
\end{equation}
Clearly $\xi^n(t) \in \Omega$ for all $t\ge0$ and $\xi^n(0)$ does not necessarily coincide with $\bar{\xi}^n$.

In the next theorem we state our main result, which deals with a class of \emph{small} initial data in $\BV$. For further use, we define the function $\Upsilon(\rho) \doteq c(\rho) - c'(\rho) \, \rho$, which is strictly decreasing in view of assumption \eqref{C} above. We then set
\begin{align*}
 & \mathcal{L} \doteq \mathrm{Lip}[\Upsilon|_{[0,R]}]=\max \left\{  c''\left(\rho\right) \rho \colon \rho \in \left[0,R\right] \right\},
 \\
 & C \doteq c'(R)\, R = \max\left\{c'(\rho) \, \rho \colon \rho \in \left[0,R\right]\right\}.
\end{align*}

\begin{theorem}\label{teo:2}
Assume \eqref{I1}, \eqref{I2}, \eqref{V1}, \eqref{VH} and \eqref{C}.
If the initial datum $\bar\rho$ satisfies
\begin{align}\label{eq:inequality}
&R \doteq \|\bar{\rho}\|_{\L\infty(\Omega)} < \rho_{\max},
&\frac{v_{\max}}{2}
\left[\vphantom{\sum} \mathcal{L}\,{\tv}(\bar{\rho})
+
3 \, C\right]
<
v(R).
\end{align}
then there exists a unique entropy solution $\rho$ to \eqref{eq:model} in the sense of Definition~\ref{def:entropy_solution} defined globally in time.
Such a solution is obtained as a strong $\L1$-limit of the discrete density $\rho^n$ constructed via the FTL particle system \eqref{eq:FTL-Hughes}.
\end{theorem}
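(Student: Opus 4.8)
The plan is to run, in parallel, the argument behind Theorem~\ref{thm:main_LWR} on each of the two particle families into which $\bar\xi^n$ splits the configuration — the left particles $\{x^n_0,\dots,x^n_{I_0}\}$ and the right particles $\{x^n_{I_0+1},\dots,x^n_n\}$ — while separately controlling the discrete turning point $\xi^n$. By the reformulation \eqref{eq:hughes_reformulated}, in the region where the two families do not interact the left one solves (after the reflection $x\mapsto-x$) exactly the FTL system \eqref{eq:FTL} of the LWR Cauchy problem, and so does the right one; assumptions \eqref{V1}, \eqref{VH}, \eqref{C} make this two-sided structure meaningful, and the role of \eqref{eq:inequality} is precisely to guarantee that the non-interaction persists for all times via a uniform bound on $|\dot\xi^n|$. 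So I would first apply Lemma~\ref{lem:maximum} to each family to get $\|\rho^n(t,\cdot)\|_{\L\infty}\le R$ and $x^n_{i+1}-x^n_i\ge\ell_n/R$; Proposition~\ref{pro:bv_contraction} (here \eqref{I2} enters) to each family, plus the at most four additional jumps carried by the two end platoons and by the artificial vacuum platoon $[x^n_{I_0},x^n_{I_0+1})$, to get $\tv(\rho^n(t,\cdot))\le\tv(\bar\rho)+o(1)$ uniformly in $n,t$; and Proposition~\ref{pro:time_continuity} to each family to obtain $W_{L-\ell_n,1}(\rho^n(t,\cdot),\rho^n(s,\cdot))\le v_{\max}\,|t-s|$, i.e.\ \eqref{B}, hence \eqref{Bbis}.

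\textbf{Step 2 (the turning point; main obstacle).} The core of the proof is a uniform bound on $\dot\xi^n$. I would differentiate in time the identity defining $\xi^n(t)$; since $\rho^n\equiv0$ in a neighbourhood of $\xi^n$ one has $c(\rho^n(t,\xi^n(t)))=c(0)=1$, so the moving-boundary term simply produces $\dot\xi^n$. Using $\frac{\d}{\d t}\int_{x^n_i}^{x^n_{i+1}}c(\rho^n(t,y))\,{\d}y=\Upsilon(R^n_i(t))\,[v(R^n_{i\pm1}(t))-v(R^n_i(t))]$, which follows from \eqref{eq:ODE_density} and $\Upsilon(\rho)=c(\rho)-c'(\rho)\,\rho$, a summation by parts turns the two telescoping sums into interior variations of $\Upsilon(\rho^n)$ — controlled by $\mathcal{L}\,\tv(\rho^n)$ since $\mathrm{Lip}[\Upsilon|_{[0,R]}]=\mathcal{L}$ — plus boundary terms at the two exits and at the turning point, each of the form $[\Upsilon(\cdot)-\Upsilon(0)]\cdot[\text{speed}]$ with the speed bounded by $v_{\max}$ and $|\Upsilon(\rho)-\Upsilon(0)|=\int_0^\rho(c'(\rho)-c'(s))\,{\d}s\le c'(R)\,R=C$ for $\rho\in[0,R]$. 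Combining with Step 1 this should give
\[
|\dot\xi^n(t)|\le\frac{v_{\max}}{2}\bigl[\mathcal{L}\,\tv(\bar\rho)+3\,C\bigr]+o(1),
\]
which by \eqref{eq:inequality} is $<v(R)$ for $n$ large. I expect the main obstacle to be exactly the bookkeeping of the boundary contributions — in particular of the cost carried by particles leaving through $x=\pm1$ — so as to recover precisely the constant appearing in \eqref{eq:inequality}.

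\textbf{Step 3 (persistent vacuum, compactness, identification).} Since $\dot x^n_{I_0}=-v(R^n_{I_0-1})\le-v(R)$ and $\dot x^n_{I_0+1}=v(R^n_{I_0+1})\ge v(R)$, while $|\dot\xi^n|<v(R)$, I would fix $\varepsilon$ strictly between $\sup_{n,t}|\dot\xi^n(t)|$ and $v(R)$ and check by induction that no particle ever enters the cone $\mathcal{C}\doteq\{(t,x):|x-\bar\xi|<\varepsilon\,t\}$, where $\bar\xi\doteq\lim_n\bar\xi^n$; hence $\rho^n\equiv0$ on $\mathcal{C}$ and the two families are genuinely decoupled. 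Then Theorem~\ref{thm:aubin} applied on slabs $[\delta,T]\times[a,b]$ together with a diagonal argument gives, up to a subsequence, $\rho^n\to\rho$ strongly in $\L1$ and a.e.\ on $\R_+\times\R$ (supports remaining bounded on $[0,T]$), while Arzel\`a--Ascoli gives $\xi^n\to\xi$ uniformly with $\xi$ Lipschitz and $\xi(0)=\bar\xi$. It remains to identify $\rho$: it vanishes on $\mathcal{C}$; applying Theorem~\ref{thm:main_LWR} to each family identifies $\rho\,\mathbf{1}_{(-\infty,\bar\xi)}$ and $\rho\,\mathbf{1}_{(\bar\xi,\infty)}$ with the entropy solutions of \eqref{eq:cauchy} with data $\bar\rho\,\mathbf{1}_{(-\infty,\bar\xi)}$ and $\bar\rho\,\mathbf{1}_{(\bar\xi,\infty)}$ (using $\bar\rho^n\to\bar\rho$, $\bar\xi^n\to\bar\xi$ by continuity of the balance with respect to $\bar\rho$, and the ensuing convergence of the split discretized initial data up to a vacuum set); and passing to the limit in the balance relation for $(\rho^n,\xi^n)$ — by strong $\L1$ convergence of $\rho^n$, continuity of $c$ on $[0,R]$ and uniform convergence of $\xi^n$ — yields the integral identity of Definition~\ref{def:entropy_solution} for a.e.\ $t$, with the turning curve continuous and contained in $\mathcal{C}$. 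Hence $\rho$ is an entropy solution in the sense of Definition~\ref{def:entropy_solution}.

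\textbf{Step 4 (uniqueness).} Given $\bar\rho$, the point $\bar\xi$ is the unique zero of the strictly increasing map $\zeta\mapsto\int_{-1}^{\zeta}c(\bar\rho)\,{\d}y-\int_{\zeta}^1c(\bar\rho)\,{\d}y$ (as $c\ge1$); the Kruzhkov uniqueness theorem then makes the two one-sided entropy solutions, hence $\rho$ itself, uniquely determined by Definition~\ref{def:entropy_solution}; and $t\mapsto\xi(t)$ is in turn uniquely recovered from the balance identity by the same strict-monotonicity argument. Since the limit is unique, the whole sequence $(\rho^n)_n$ converges, and the proof is complete.
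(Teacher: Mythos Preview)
The paper does not actually prove Theorem~\ref{teo:2}: immediately after the statement it writes ``We omit the proof of Theorem~\ref{teo:2} and we defer to \cite[Section~2.3]{DF_fagioli_rosini_russo} for the details.'' So there is no proof in the paper to compare against line by line. What the paper \emph{does} give is the strategy, both in the introduction and in Section~\ref{sec:hughes}: under the smallness assumption \eqref{eq:inequality} the Hughes dynamics decouples into a two-sided LWR system separated by a persistent vacuum region around the turning curve, and one then runs the FTL convergence machinery of Section~\ref{sec:LWR} on each side.

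Your proposal follows exactly this route --- split into left/right FTL families, reuse Lemma~\ref{lem:maximum}, Proposition~\ref{pro:bv_contraction} and Proposition~\ref{pro:time_continuity} on each, differentiate the balance relation to bound $|\dot\xi^n|$ via $\Upsilon$ and a summation by parts, deduce the vacuum cone, and conclude with Theorem~\ref{thm:aubin} plus Theorem~\ref{thm:main_LWR} on each side --- and the identification of the constants $\mathcal{L}$ and $C$ in the $|\dot\xi^n|$ estimate is the right one. Two small points worth tightening: (i) in Step~1 the ``$+o(1)$'' in the TV bound is not literally correct, since the artificial vacuum platoon at $I_0$ introduces two jumps of size up to $R$; but as you already anticipate, these are precisely the turning-point boundary terms in Step~2 and are absorbed into the $3C$ contribution rather than into $\mathcal{L}\,\tv(\bar\rho)$, so the final inequality is unaffected; (ii) in Step~3 the cone for the $n$-th system should be centred at $\bar\xi^n$ rather than at $\bar\xi$, with the passage to $\bar\xi$ only in the limit. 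Neither is a genuine gap.
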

We omit the proof of Theorem \ref{teo:2} and we defer to \cite[Section 2.3]{DF_fagioli_rosini_russo} for the details.
Let us only remark that the assumption $R<\rho_{\max}$ above is essential in order to have the right-hand-side in the inequality \eqref{eq:inequality} strictly positive.

\section{The ARZ model}\label{sec:aw}

Consider the Cauchy problem for the ARZ model \cite{ARZ1, ARZ2}
\begin{equation}\label{eq:CauchyARZ}
\begin{cases}
    \rho_t + \left(\rho \, v\right)_x=0,&t>0,~ x\in\R,	
    \\
    (\rho\,w)_t + \left(\rho\,v\,w\right)_x=0,&t>0,~ x\in\R,
    \\
    (v,w)(0,x)= (\bar{v},\bar{w})(x),&x\in \R,
\end{cases}
\end{equation}
where $v$ is the velocity, $w$ is the Lagrangian marker and $(\bar{v},\bar{w})$ is the corresponding initial datum.
Moreover, $(v,w)$ belongs to $\mathcal{W} \doteq \left\{(v,w) \in \bar{\R}_+^2 \colon v \le w\right\}$ and $\rho\doteq p^{-1}(w-v) \ge 0$ is the corresponding density, where $p \in \C0(\bar{\R}_+;\bar{\R}_+)\cap\C2(\R_+;\bar{\R}_+)$ satisfies
\begin{align}\label{P}\tag{P}
    &p(0^+) = 0,&
    &p'(\rho)>0&
    &\text{and}&
    &2\,p'(\rho)+\rho \, p''(\rho)>0&
    \text{for every }\rho>0.
\end{align}
The typical choice is $p(\rho) \doteq \rho^\gamma$, $\gamma>0$.
By definition, we have that the vacuum state $\rho=0$ corresponds to the half line $\mathcal{W}_0 \doteq \left\{(v,w)^T \in \mathcal{W} \colon  v=w \right\}$ and the non-vacuum states $\rho>0$ to $\mathcal{W}_0^c \doteq \mathcal{W}\setminus\mathcal{W}_0$.

\begin{definition}[{\cite[Definition~2.3.]{BCJMU-order2} and \cite[Definition~2.2]{donadello_rosini}}]\label{def:weak}
Let $(\bar{v},\bar{w})\in \L\infty(\R;\,\mathcal{W})$. We say that a function $(v,w)\in \L\infty(\bar{\R}_+\times \R;\,\mathcal{W})\cap \C0(\bar{\R}_+;\,\Lloc1(\R;\,\mathcal{W}))$ is a weak solution of \eqref{eq:CauchyARZ} if it satisfies the initial condition $(v(0,x),w(0,x))=(\bar{v}(x),\bar{w}(x))$ for a.e.\ $x\in \R$ and for any test function $\phi\in \Cc\infty(\R\times \R)$
\[
\iint_{\bar{\R}_+\times\R} p^{-1}(v,w) \, (\phi_t + v \, \phi_x) \begin{pmatrix} 1 \\ w\end{pmatrix} {\d}x ~\d{t} = \begin{pmatrix} 0 \\ 0\end{pmatrix}.
\]
\end{definition}
We refer to \cite{ferreira} for the existence of solutions to \eqref{eq:CauchyARZ} away from vacuum, and to \cite{godvik} for the existence with vacuum.
Let us briefly recall the main properties of the solutions to \eqref{eq:CauchyARZ}. If the initial density $\bar{\rho} \doteq p^{-1}(\bar{w}-\bar{v})$ has compact support, then the support of $\rho$ has finite speed of propagation. The maximum principle holds true in the Riemann invariant coordinates $(v,w)$, but not in the conserved variables $(\rho,\rho\,w)$ as a consequence of hysteresis processes. Moreover, the total space occupied by the vehicles is time independent: $\int_\R  \rho(t,x) ~{\d}x = \|\bar \rho\|_{\L1(\R)}$ for all $t\ge0$.

\subsection{The follow-the-leader scheme and main result}

We introduce our atomization scheme for the Cauchy problem \eqref{eq:CauchyARZ}.
Let $(\bar{v},\bar{w}) \in \BV(\R;\mathcal{W})$ be such that $\bar\rho \doteq p^{-1}(\bar{w}-\bar{v})$ belongs to $\L1(\R)$ and $\bar\rho$ is compactly supported. Denote by $\bar{x}_{\min} < \bar{x}_{\max}$ the extremal points of the convex hull of the compact support of $\bar\rho$, namely \[\bigcap_{\left[a,b\right]\supseteq\supp\left(\bar\rho\right)} \left[a,b\right] = \left[\bar{x}_{\min}, \bar{x}_{\max}\right].\]
Fix $n \in \N$ sufficiently large. Let $L \doteq \norma{\bar\rho}_{\L1(\R)} > 0$ and $\ell_n \doteq L/n$. Set recursively
\begin{equation}\label{eq:initial_ftl}
\begin{cases}
\bar{x}_0^n \doteq \bar{x}_{\min},
\\
\bar{x}_i^n \doteq \sup\left\{x\in \R  \colon \int_{\bar{x}^n_{i-1}}^x\bar\rho(x) ~{\d}x<\ell_n\right\},&
i \in \{1,\ldots,n\}.
\end{cases}
\end{equation}
It is easily seen that $\bar{x}^n_{n}=\bar{x}_{\max}$ for all $i=0,\ldots,n$.
We approximate then $\bar{w}$ by taking
\begin{align}\label{eq:dw}
&\bar{w}_i^n \doteq \underset{[\bar{x}_i^n,\bar{x}_{i+1}^n]}{\rm ess\,sup} (\bar{w}),
&i \in \{0,\ldots,n-1\}.
\end{align}
We have then
\begin{align*}
  &\ell_n=\int_{\bar{x}_i^n}^{\bar{x}_{i+1}^n} \bar\rho(x) ~{\d}x \le \left(\bar{x}_{i+1}^n - \bar{x}_{i}^n\right) \rho_{i,\max}^n ,&
  i \in \{0,\ldots,n-1\},
\end{align*}
with $\rho_{i,\max}^n \doteq p^{-1}(\bar{w}_i^n)$.
We take the values $\bar{x}_0^n,\ldots,\bar{x}_{n}^n$ as the initial positions of the $(n+1)$ particles in the $n$--depending FTL model
\begin{equation}\label{eq:ftl}
\begin{cases}
    x_{n}^n(t) = \bar{x}_{\max} + \bar{w}_{n-1}^n \,t,\\
    \dot x_i^n(t) = v_i^n \left(\frac{\ell_n}{x_{i+1}^n(t) - x_i^n(t)}\right) , & i \in \{0, \ldots, n-1\} ,\\
    x_i^n(0) = \bar{x}_i^n , & i \in \{0, \ldots, n\} ,
\end{cases}
\end{equation}
where
\begin{align}\label{eq:vni}
&v_i^n(\rho) \doteq \bar{w}_i^n - p(\rho),&
    i \in \{0,\ldots,n-1\}.
\end{align}
The quantity $\bar{w}^n_i = v_i^n(0)$ is the maximum possible velocity allowed for the $i$-th vehicle.
Clearly, only the leading vehicle $x^n_n$ reaches its maximal velocity, as the vacuum state is achieved only ahead of $x^n_n$.
The existence of a global solution to \eqref{eq:ftl} follows from \cite[Lemma~2.3]{DF_fagioli_rosini}, which generalises the discrete maximum principle of Lemma~\ref{lem:maximum}.
Finally, since $v_i^n$ is decreasing, and its argument $\ell_n/[x_{i+1}^n(t)-x_i^n(t)]$ is always bounded above by $\rho_{i,\max}^n$, we have $x_0^n(t)\geq \bar{x}_{\min} + v_0(R_0^n) \, t = \bar{x}_{\min}$.
By introducing in \eqref{eq:ftl}
\begin{align}\label{eq:Deltan}
    &R^n_i(t) \doteq \frac{\ell_n}{x_{i+1}^n(t) - x_i^n(t)},&
    i \in \{0,\ldots,n-1\},
\end{align}
we obtain
\begin{equation}\label{eq:dyi}
\begin{cases}
    \dot{R}^n_{n-1} = -\frac{(R^n_{n-1})^2}{\ell_n} \, p(R^n_{n-1}),
    \\
    \dot{R}^n_i = -\frac{(R^n_i)^2}{\ell_n} \left[v_{i+1}^n(R^n_{i+1})-v_i^n(R^n_i)\right], & i \in \{0, \ldots, n-2\} ,
    \\
    R^n_i(0) = \bar{R}^n_i \doteq \frac{\ell_n}{\bar{x}_{i+1}^n - \bar{x}_i^n} ,&
    i \in \{0,\ldots,n-1\}.
\end{cases}
\end{equation}
Observe that $\ell_n/[\bar{x}_{\max} - \bar{x}_{\min} + \bar{w}_{n-1}^n \, t] \le R^n_i(t) \le \rho_{i,\max}^n$ for all $t\ge0$ in view of the discrete maximum principle. The quantity $R^n_i$ can be seen as a discrete version of the density $\rho$ in Lagrangian coordinates, and \eqref{eq:dyi} is the discrete Lagrangian version of the Cauchy problem \eqref{eq:CauchyARZ}.

Define the piecewise constant (with respect to $x$) Lagrangian marker
\begin{align}\label{eq:hwn}
&W^n(t,x) \doteq
\begin{cases}
\bar{w}^n_0 &\text{if } x \in \left(-\infty, x_0^n(t)\right),
\\
\bar{w}^n_i&\text{if } x \in \left[x_i^n(t), x_{i+1}^n(t)\right),~ i \in \{0, \ldots, n-1\},
\\
\bar{w}^n_{n-1}&\text{if } x \in \left[x^n_{n}(t),\infty\right),
\end{cases}
\intertext{and the piecewise constant (with respect to $x$) velocity}
\label{eq:hvn}
&V^n(t,x) \doteq
\begin{cases}
\bar{w}^n_0&\text{if } x \in \left(-\infty, x_0^n(t)\right),
\\
v^n_i(R^n_i(t))&\text{if } x \in \left[x_i^n(t), x_{i+1}^n(t)\right),~ i \in \{0, \ldots, n-1\},
\\
\bar{w}^n_{n-1}&\text{if } x \in \left[x^n_{n}(t),\infty\right).
\end{cases}
\end{align}
We are now ready to state the main result proved in \cite{DF_fagioli_rosini}.

\begin{theorem}
\label{thm:mainARZ}
Assume \eqref{P}.
Let $(\bar{v},\bar{w})\in \BV(\R\,;\,\mathcal{W})$ be such that $\bar\rho \doteq p^{-1}(\bar{w}-\bar{v})$ is compactly supported and belongs to $\L1(\R)$.
Fix $n\in \N$ sufficiently large and let $\ell_n \doteq L/n$, with $L \doteq \norma{\bar{\rho}}_{\L1(\R)}$. Let $\bar{x}_0^n<\ldots<\bar{x}_{n}^n$ be the atomization constructed in \eqref{eq:initial_ftl}. Let $x_0^n(t),\ldots,x_{n}^n(t)$ be the solution to the FTL system \eqref{eq:ftl}.
Let $\bar{w}_0^n, \ldots, \bar{w}_{n-1}^n$ be given by \eqref{eq:dw}. Set $W^n$ and $V^n$ as in \eqref{eq:hwn} and \eqref{eq:hvn} respectively, where $v^n_i$ and $R^n_i$ are defined by \eqref{eq:vni} and \eqref{eq:Deltan} respectively. Then, $(V^n,W^n)_n$ converges (up to a subsequence) in $\Lloc1(\bar{\R}_+\times \R;\mathcal{W})$ as $n\rightarrow \infty$ to a weak solution of the Cauchy problem \eqref{eq:CauchyARZ} with initial datum $(\bar{v},\bar{w})$ in the sense of Definition~\ref{def:weak}.
\end{theorem}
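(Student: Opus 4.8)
The plan is to follow the blueprint of the proof of Theorem~\ref{thm:main_LWR}, exploiting the fact that in the Riemann invariant variables $(v,w)$ the scheme \eqref{eq:ftl} essentially decouples: the Lagrangian markers $\bar w^n_i$ are frozen in time, so on each block of particles sharing the same $\bar w^n_i$ the evolution \eqref{eq:dyi} is a first order (LWR-type) Follow-the-Leader scheme with flux $v^n_i(\rho)=\bar w^n_i-p(\rho)$. Concretely I would (i) collect uniform $\L\infty$, $\BVloc$ and scaled Wasserstein time-continuity estimates, (ii) extract a.e.\ limits of $\rho^n$, $W^n$ and $V^n$ by means of the generalised Aubin--Lions lemma (Theorem~\ref{thm:aubin}) and an elementary algebraic identity, and (iii) pass to the limit in the weak formulation of Definition~\ref{def:weak}.

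\emph{Uniform estimates.} The discrete maximum principle of \cite[Lemma~2.3]{DF_fagioli_rosini} gives $\ell_n/[\bar{x}_{\max}-\bar{x}_{\min}+\bar w^n_{n-1}\,t]\le R^n_i(t)\le\rho^n_{i,\max}=p^{-1}(\bar w^n_i)\le p^{-1}(\norma{\bar w}_{\L\infty(\R)})$, whence $(V^n,W^n)$ stays in a fixed compact subset of $\mathcal W$, the total mass $\norma{\rho^n(t,\cdot)}_{\L1(\R)}=L$ is conserved, and $\supp\rho^n(t,\cdot)\subseteq[\bar x_{\min},\bar x_{\max}+\norma{\bar w}_{\L\infty(\R)}\,T]$ for $t\in[0,T]$. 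Since $W^n(t,\cdot)$ is piecewise constant with jumps only at the particle positions and the jump heights $|\bar w^n_i-\bar w^n_{i-1}|$ do not depend on time, $\tv(W^n(t,\cdot))$ is constant in $t$ and bounded by $\tv(\bar w)$; moreover these jumps travel with speed at most $\norma{\bar w}_{\L\infty(\R)}$, so $t\mapsto W^n(t,\cdot)$ is Lipschitz into $\Lloc1(\R)$ uniformly in $n$. The only delicate bound is the uniform control $\sup_{t\in[0,T]}\tv(V^n(t,\cdot))\le C$: contrary to the LWR situation of Proposition~\ref{pro:bv_contraction}, the total variation of the velocity is not monotone in time, because the frozen jumps of $W^n$ feed variation into $V^n$; the corresponding discrete estimate is proved in \cite{DF_fagioli_rosini}, and I would quote it. Since $W^n-V^n=p(\rho^n)$ pointwise, it entails $\tv(p(\rho^n(t,\cdot)))\le C$ as well. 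Finally, the Eulerian speed of each mass point of $\rho^n$ is a convex combination of the particle velocities $\dot x^n_i=v^n_i(R^n_i)\in[0,\norma{\bar w}_{\L\infty(\R)}]$, so differentiating the pseudo-inverse $X_{\rho^n(t,\cdot)}$ as in Proposition~\ref{pro:time_continuity} yields $W_{L,1}(\rho^n(t,\cdot),\rho^n(s,\cdot))\le C\,|t-s|$, i.e.\ \eqref{B}.

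\emph{Compactness and limits.} I would then apply Theorem~\ref{thm:aubin} to $(\rho^n)_n$ with the strictly monotone continuous map $p$ of \eqref{P} playing the role of $v$: assumption \eqref{Abis} follows from the $\L\infty$ bound, the uniform compact support, and $\tv(p(\rho^n))\le C$, while \eqref{Bbis} follows from \eqref{B}; hence $\rho^n\to\rho$ strongly in $\Lloc1(\bar\R_+\times\R)$ and a.e., up to a subsequence. For the markers, the uniform spatial $\BV$ bound together with the uniform $\Lloc1$-Lipschitz continuity in time gives, by the standard Fréchet--Kolmogorov/Aubin--Lions argument, $W^n\to W$ in $\Lloc1(\bar\R_+\times\R)$ and a.e., along a further subsequence. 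Then $V^n=W^n-p(\rho^n)\to W-p(\rho)=:V$ a.e.\ and in $\Lloc1$; passing to the limit in $0\le V^n\le W^n$ shows $(V,W)\in\L\infty(\bar\R_+\times\R;\mathcal W)$ with $\rho=p^{-1}(W-V)$ (on the vacuum set one has $W=V$, consistently with $p^{-1}(0)=0$), and the $\C0(\bar\R_+;\Lloc1)$ regularity is inherited from \eqref{B} and the $\BV$ bounds exactly as in Section~\ref{sec:LWR}.

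\emph{Identification of the limit and main obstacle.} The identification is the exact analogue of Step~1 in the proof of Theorem~\ref{thm:main_LWR}, carried out now for the two conservation laws $\rho_t+(\rho\,v)_x=0$ and $(\rho\,w)_t+(\rho\,v\,w)_x=0$ in conservative form. Inserting $\rho^n$ and $\rho^n\,W^n$ into the two weak identities of Definition~\ref{def:weak} and integrating by parts by means of \eqref{eq:ftl} and \eqref{eq:dyi}—using that the $i$-th cell carries the time-independent masses $\ell_n$ and $\ell_n\,\bar w^n_i$—the residual is bounded by $C\,\ell_n\,\lip[\varphi]\,[1+\sup_t\tv(V^n(t,\cdot))+\sup_t\tv(W^n(t,\cdot))]$ and hence vanishes as $n\to\infty$ by the estimates above; the genuine terms converge by the a.e.\ convergence, the uniform bounds and the uniform compact support of $\rho^n$ (dominated convergence). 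The initial datum is attained because $\rho^n(0,\cdot)\to\bar\rho$ in $\L1(\R)$ and $W^n(0,\cdot)\to\bar w$ in $\Lloc1(\R)$ (ess-sup over shrinking intervals of a $\BV$ function), so $V^n(0,\cdot)=W^n(0,\cdot)-p(\rho^n(0,\cdot))\to\bar w-p(\bar\rho)=\bar v$. I expect the single genuinely new difficulty, compared with the first order analysis, to be precisely the uniform $\BV$ bound on $V^n$: since $w$ is merely transported, its frozen jumps act as sources in the equation for $v$ and one cannot rely on the monotone decay of Proposition~\ref{pro:bv_contraction}; the substitute—tracking a suitable combination of the variations of $v^n$ and $w^n$ and closing a Grönwall-type inequality with the help of \eqref{P}—is the technical heart of the argument and is the content of \cite{DF_fagioli_rosini}, to which I defer.
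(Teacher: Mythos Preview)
The paper does not actually prove Theorem~\ref{thm:mainARZ}: immediately after the statement it writes ``We omit the proof of Theorem~\ref{thm:mainARZ} and we defer to \cite[Theorem~3.2]{DF_fagioli_rosini} for the details.'' Your proposal is consistent with this: you follow the LWR blueprint of Section~\ref{sec:LWR} (uniform $\L\infty$/$\BV$/Wasserstein bounds, Aubin--Lions compactness, passage to the limit in the weak formulation), you correctly single out the uniform $\BV$ control on $V^n$ as the only genuinely new obstacle versus the first-order case, and you defer precisely that step to \cite{DF_fagioli_rosini}---which is exactly what the paper does for the entire proof.
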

We omit the proof of Theorem \ref{thm:mainARZ} and we defer to \cite[Theorem 3.2]{DF_fagioli_rosini} for the details.
For completeness, we point out that the corresponding discrete density is
\[\rho^n(t,x) \doteq p^{-1}(W^n(t,x)-V^n(t,x)) = \sum_{i=1}^{n-1} R_i^n(t) \, \chi_{[x_i^n(t),x_{i+1}^n(t)[}(x).\]

\section{Numerical simulations}\label{sec:numerics}
In this section we present numerical simulations for the particle method described above.
We compare the numerical simulations with the exact solutions obtained by the method of characteristics and that with the Godunov method.
The particle system is solved by using the Runge-Kutta MATLAB solver ODE23, with the  initial mesh size determined by the total number of particles $N$ and the initial density values.

\subsection{The Cauchy problem for the LWR equation}
We first furnish one example for the Cauchy problem for the LWR equation \eqref{eq:cauchy} with flux given by $f(\rho) \doteq \rho \; (1-\rho)$.
In \figurename~\ref{fig:Test1} we take $N=200$, final time $T=0.5$ and initial datum
\begin{equation}\label{IC}
\bar{\rho}(x)=\begin{cases}
 0.4 &\text{if } -1\leq x\leq 0, \\
 0.8 &\text{if } 0< x\leq 1, \\
 0 & \text{otherwise}.
\end{cases}
\end{equation}
In \figurename~\ref{fig:confr} we compare the result of the simulation with $N=400$ particles and final time $t=0.5$ with exact solutions.
\begin{figure}
\begin{center}
\includegraphics[width=.32\textwidth]{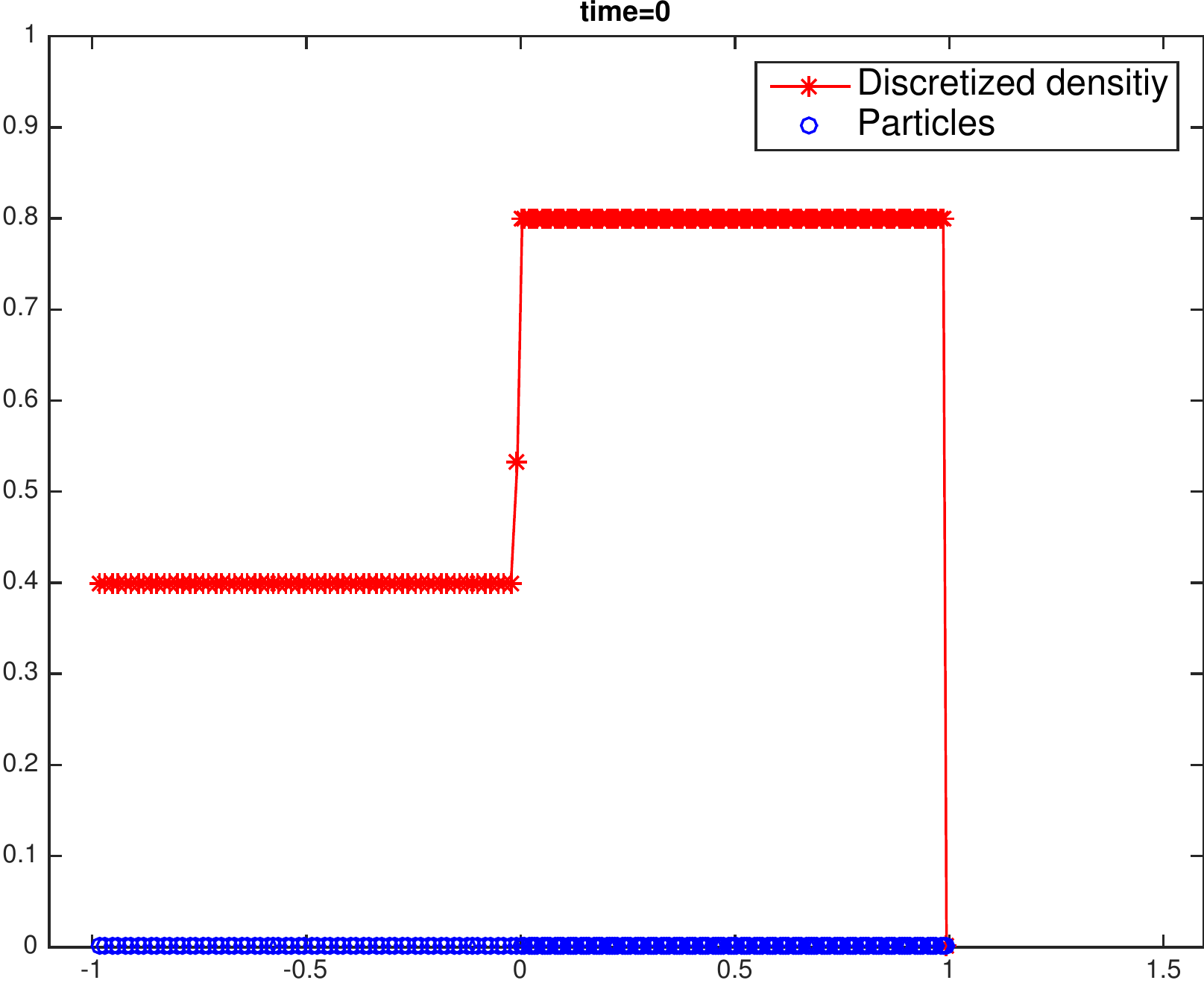}~
\includegraphics[width=.32\textwidth]{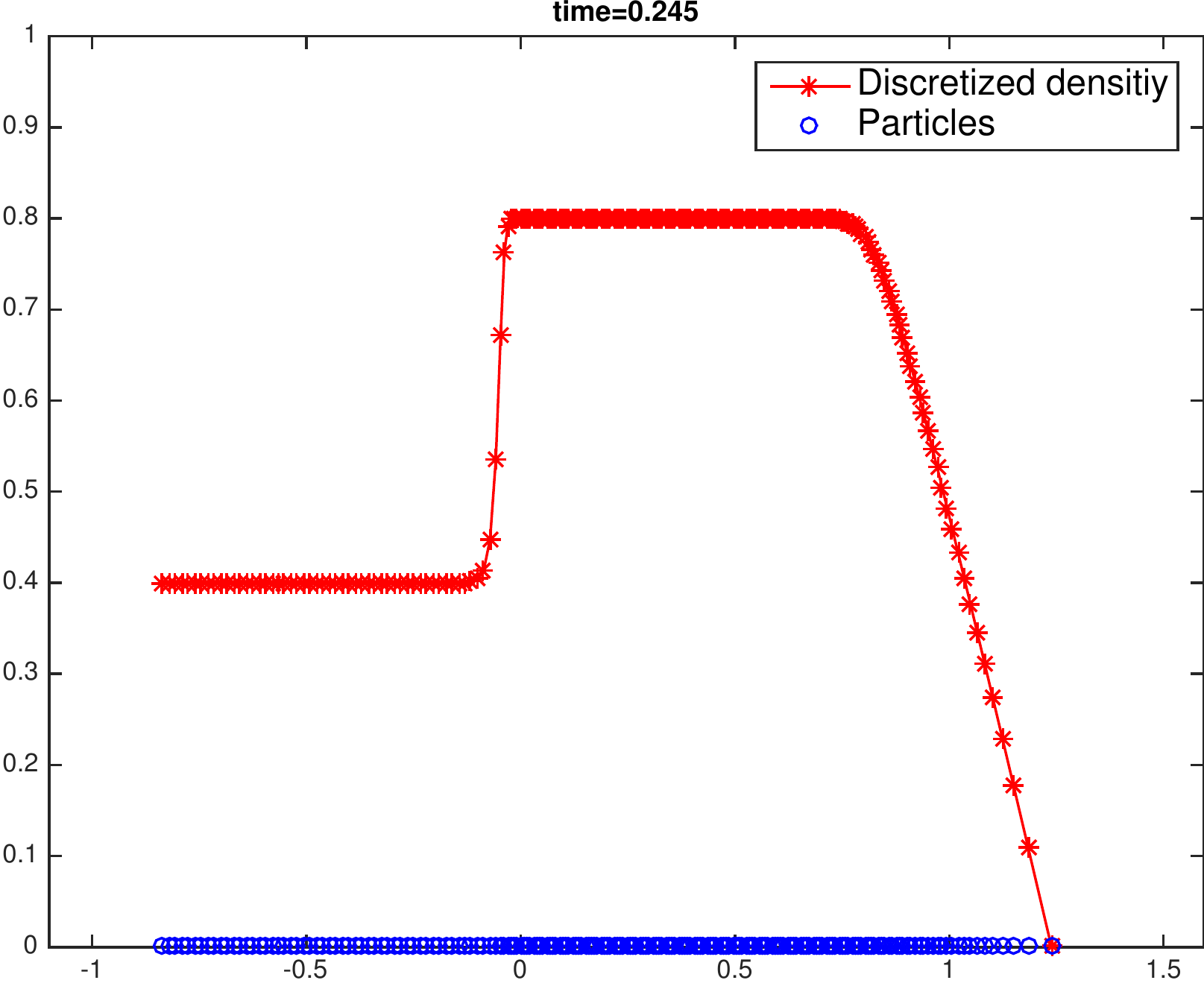}~
\includegraphics[width=.32\textwidth]{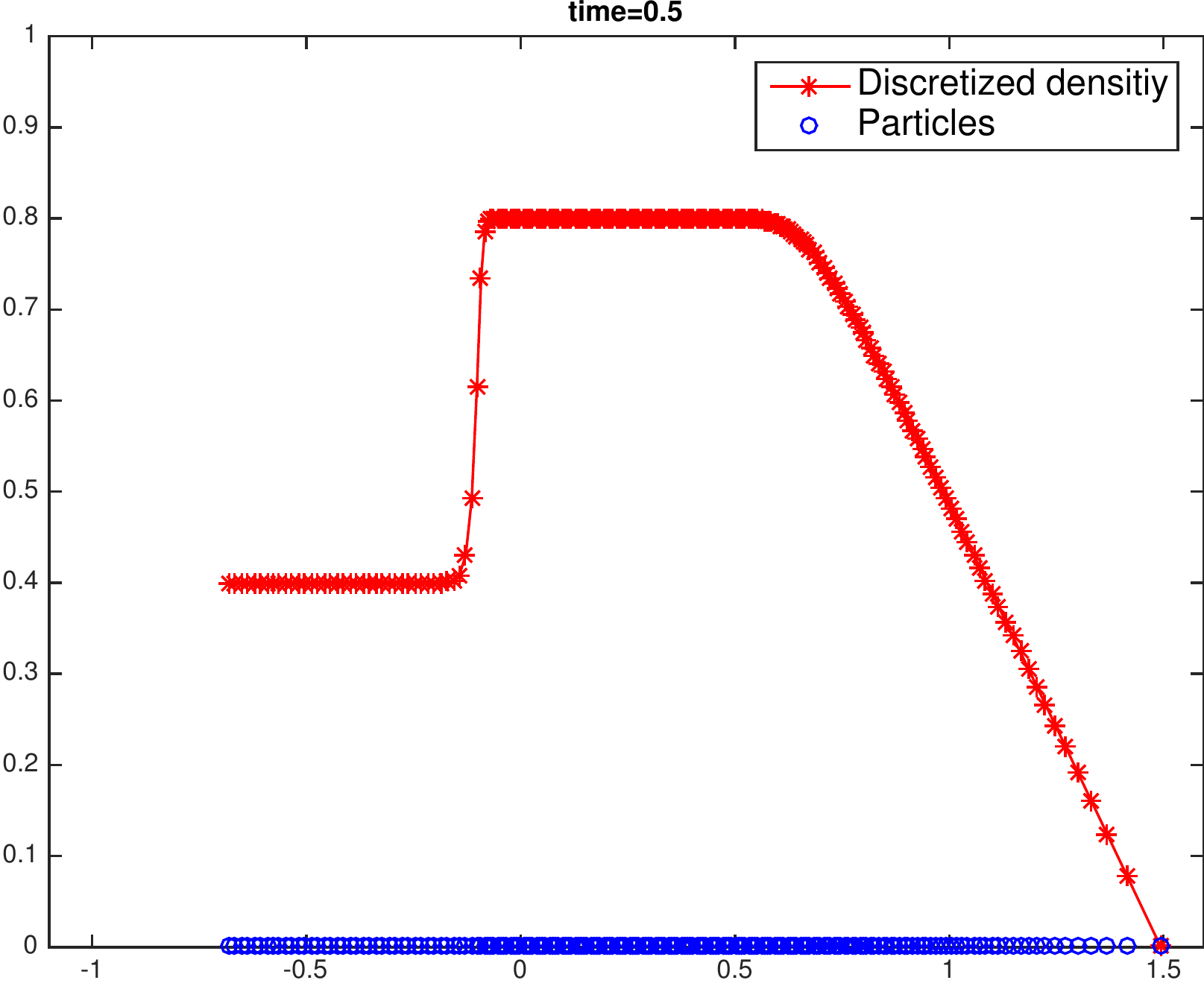}
\end{center}
\caption{The evolution of $\rho^n$ given by \eqref{eq:disdens} and corresponding to the initial datum \eqref{IC}.
The circles in the bottom (in blue) denote particle location, while the stars in the top (in red) denote the computed density.}
\label{fig:Test1}
\end{figure}

\begin{figure}
\begin{center}
\includegraphics[width=.32\textwidth]{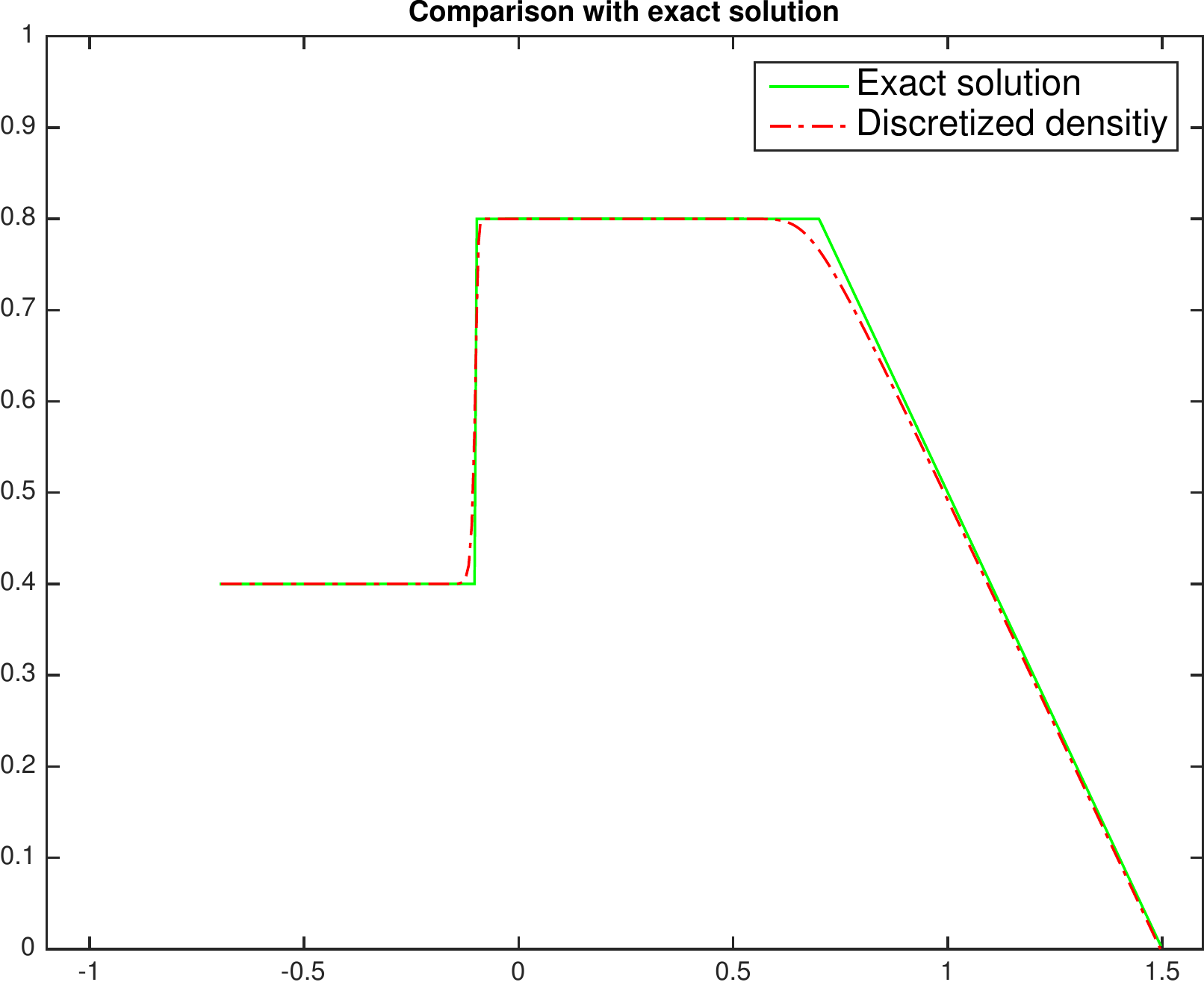}
\end{center}
\caption{Comparison between the exact solution (continuous green line) and $\rho^n$ (``$-\cdot -$'' in red) for $N=400$ and initial datum \eqref{IC}. \label{fig:confr}}
\end{figure}

\subsection{The Cauchy-Dirichlet problem for the LWR equation}\label{sec:simibvp}

More interesting situations can be illustrated in the case of LWR with Dirichelet boundary conditions \eqref{eq:ibvp}, see  \figurename~\ref{fig:Test1bc} and  \figurename~\ref{fig:Test2bc}. As pointed out in Section~\ref{Sec:dirichlet}, the atomization algorithm introduces artificial queuing particles for miming the left boundary condition. In $x<0$ we arrange $N$ queuing particles (the ones that are going to enter in the domain at time $T$), with $N$ given by \eqref{eq:QN}.
Again we take $f(\rho) \doteq \rho(1-\rho)$.
For $N=100$ particles we consider, according to the notation in Section~\ref{Sec:dirichlet}, $\bar{\rho}(x)=0.2$ in \figurename~\ref{fig:Test1bc} and \figurename~\ref{fig:Test2bc} with left boundary condition $\bar{\rho}_0=0.4$ and right boundary conditions $\bar{\rho}_1=0$ and $\bar{\rho}_1=1$ respectively.
In \figurename~\ref{fig:Test3bc} we set
\begin{align}\label{PP}
&\bar{\rho}(x)=\begin{cases}
 0.8 &\text{if } x\in [0,0.5], \\
 0.1 &\text{if } x\in (0.5,1],
\end{cases}&
&\bar{\rho}_0=0.3, &\bar{\rho}_1=0.1.
\end{align}
The latter example is chosen in such way that the actual entropy solution does not match the solution obtained without reupdating the boundary condition at $x=0$ at every time step.
A comparison between discretized densities and numerical solutions obtained via Godunov scheme is also plotted in \figurename~\ref{fig:Test1bc} and \figurename~\ref{fig:Test2bc}. We set the spatial discretization according to the number of particles $N$; the time step is the same for both methods and is selected so that the CFL condition for the Godunov method holds. Empirically, the observed time step restriction for the FTL method is much less severe than for the Godunov method applied to the Eulerian descriprion of the flow.
\begin{figure}
\begin{center}
\includegraphics[width=.32\textwidth]{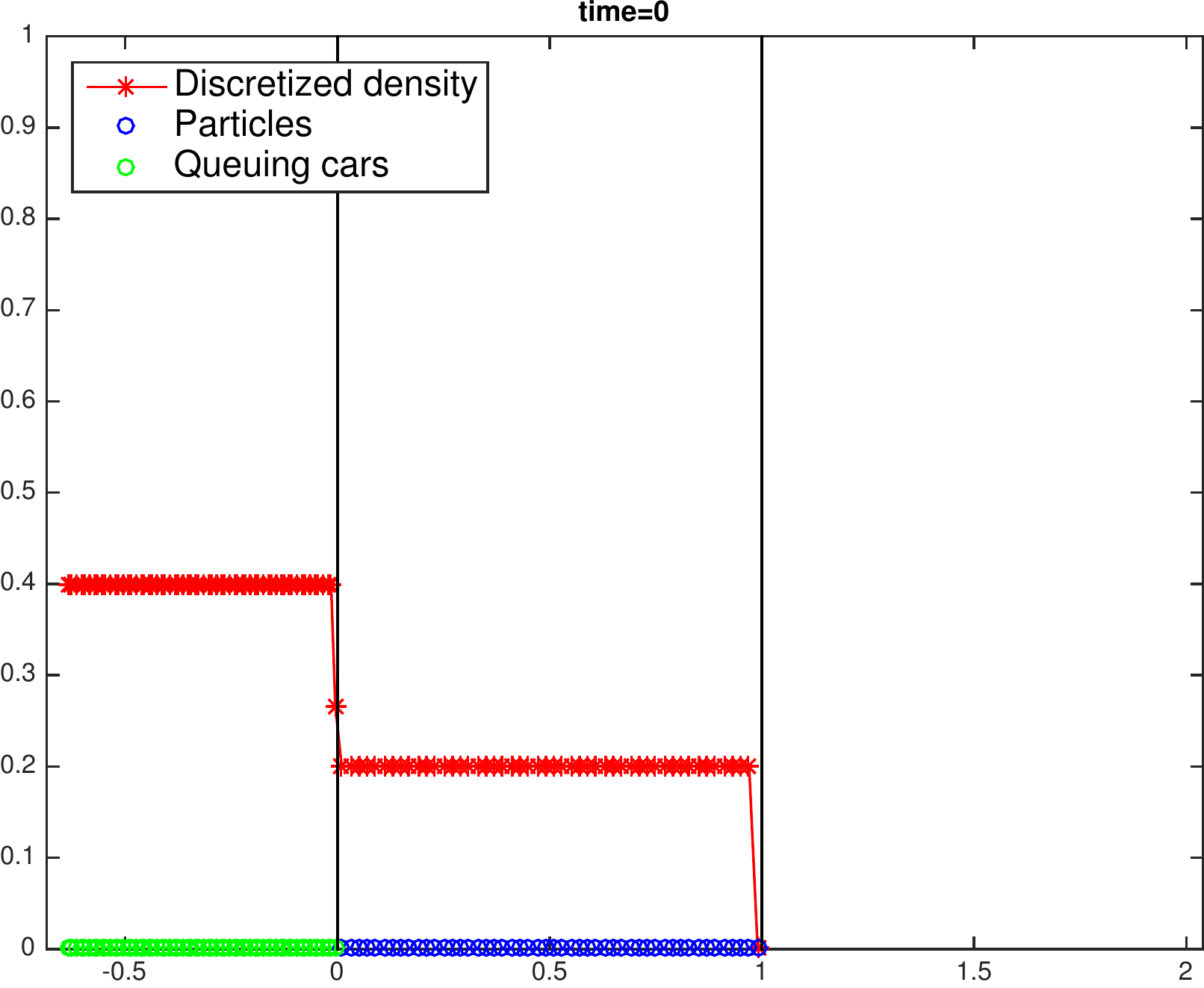}~
\includegraphics[width=.32\textwidth]{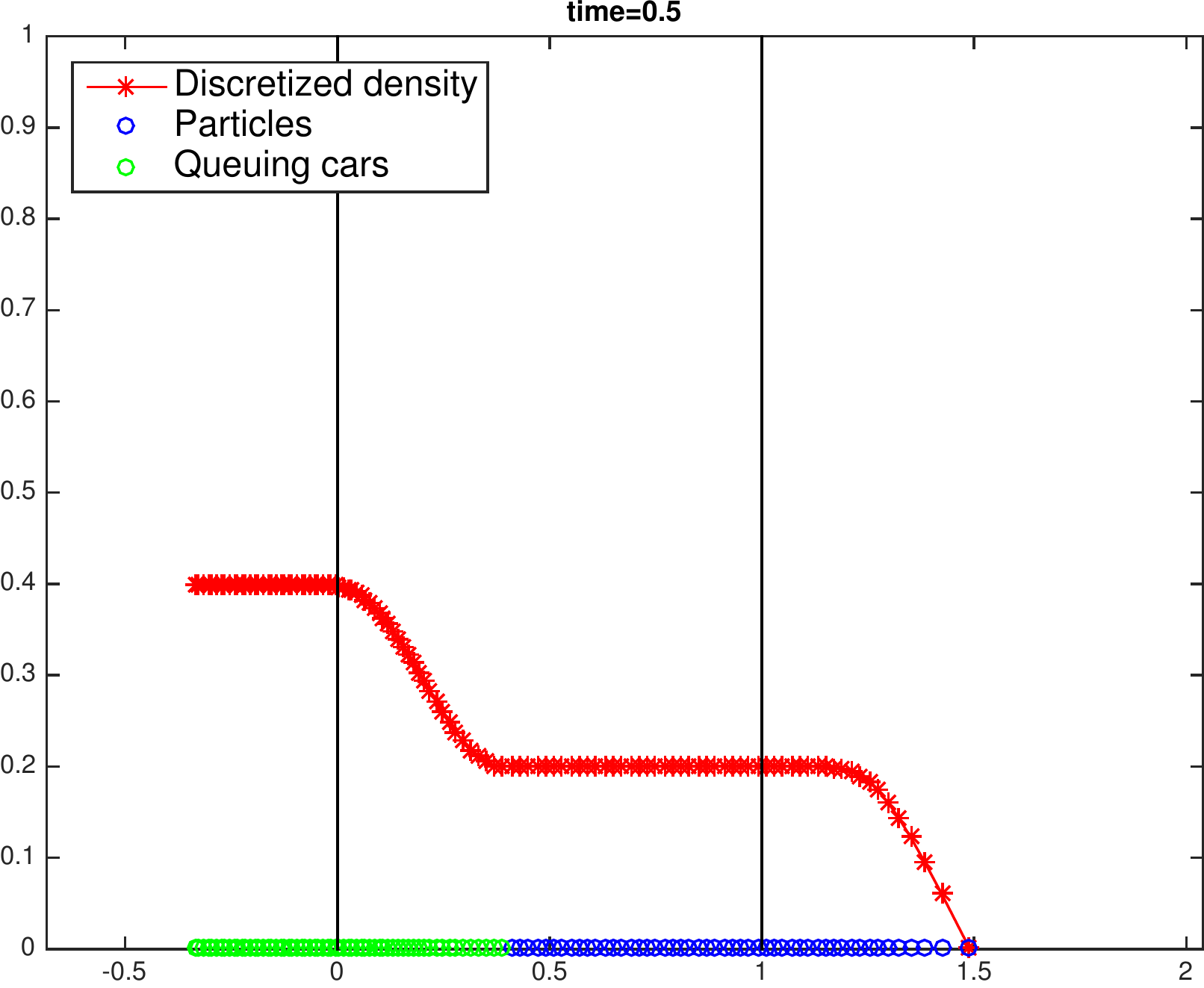}~
\includegraphics[width=.32\textwidth]{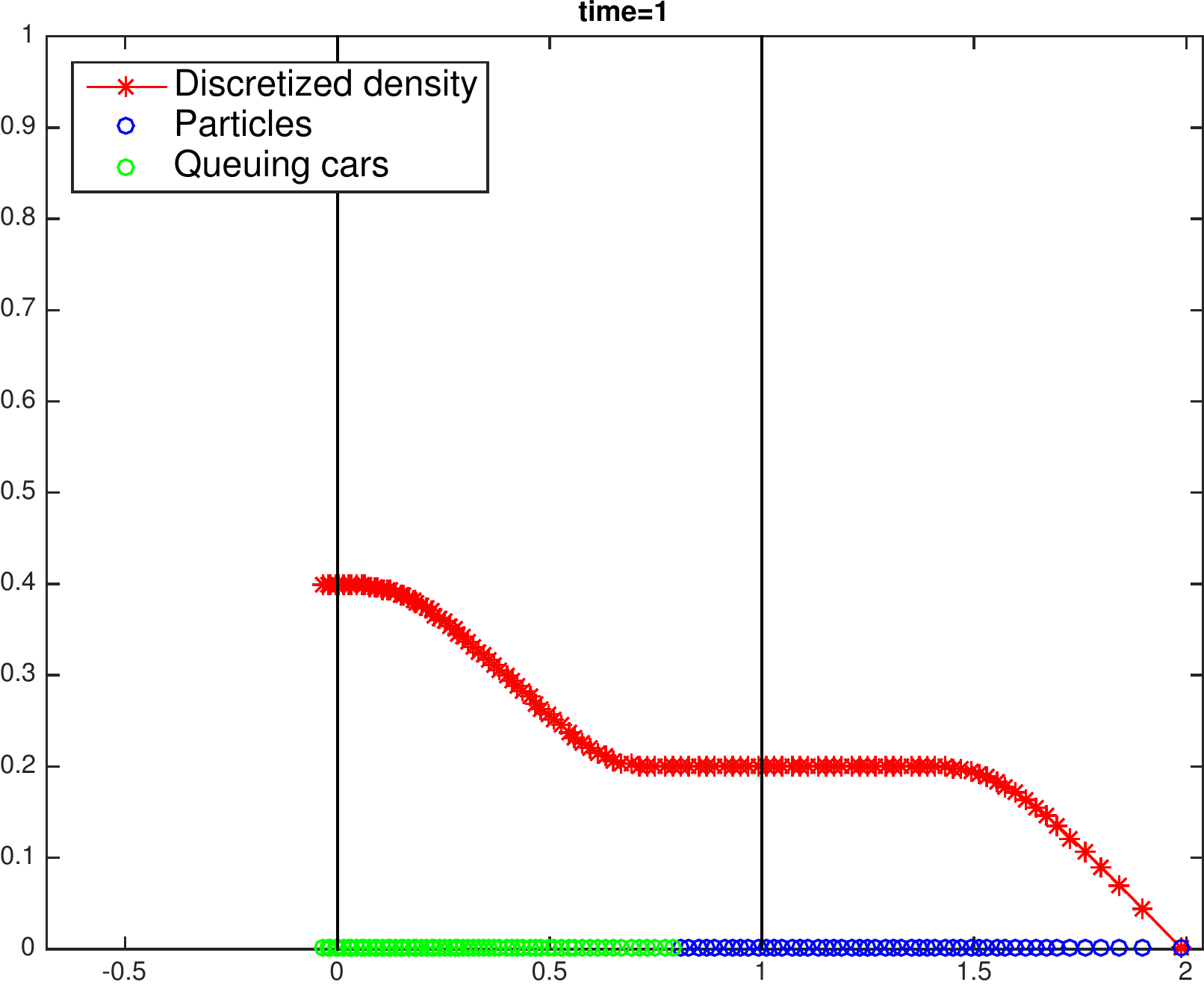}\\
\includegraphics[width=.4\textwidth]{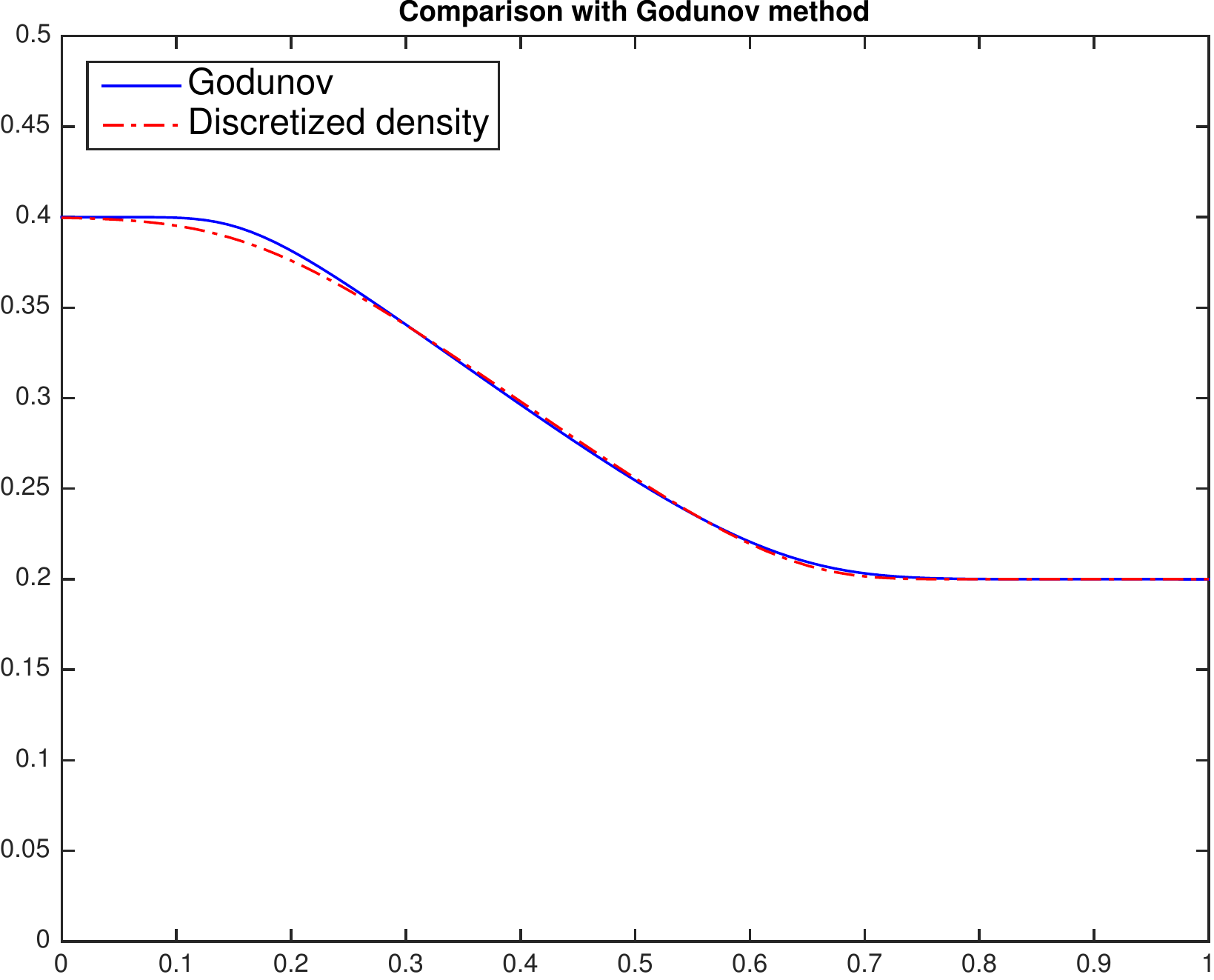}
\end{center}
\caption{The circles in the bottom are now divided in two groups: particles that are initially inside the domain (blue) and the queuing particles (green). The star-shaped line in the top (in red) denotes the computed density. Vertical black lines denote the boundary of $[0,1]$. The initial-boundary setting produces two rarefaction waves both travelling from the left to the right. In the bottom row we present a comparison with Godunov method.}
\label{fig:Test1bc}
\end{figure}

\begin{figure}
\begin{center}
\includegraphics[width=.32\textwidth]{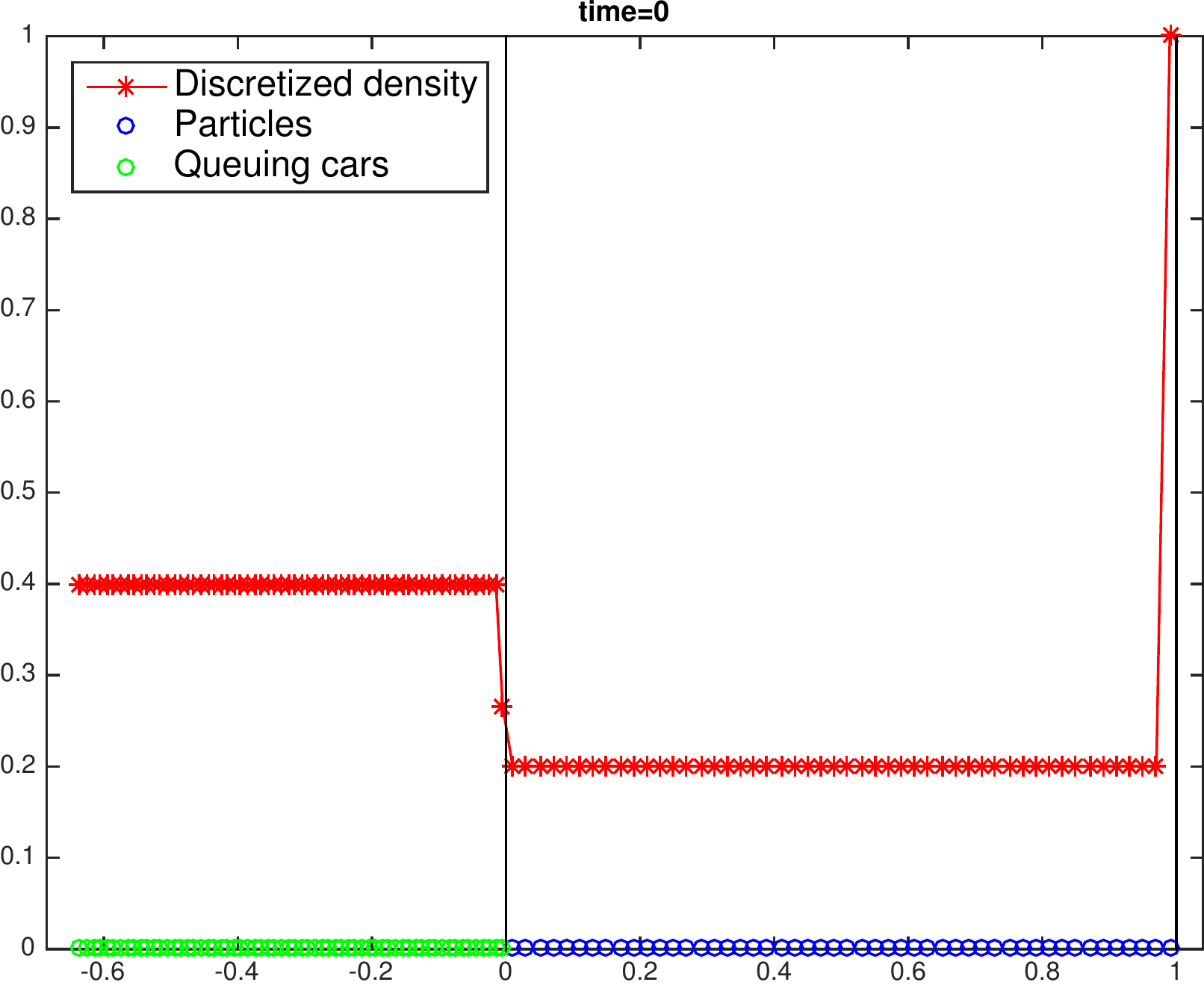}~
\includegraphics[width=.32\textwidth]{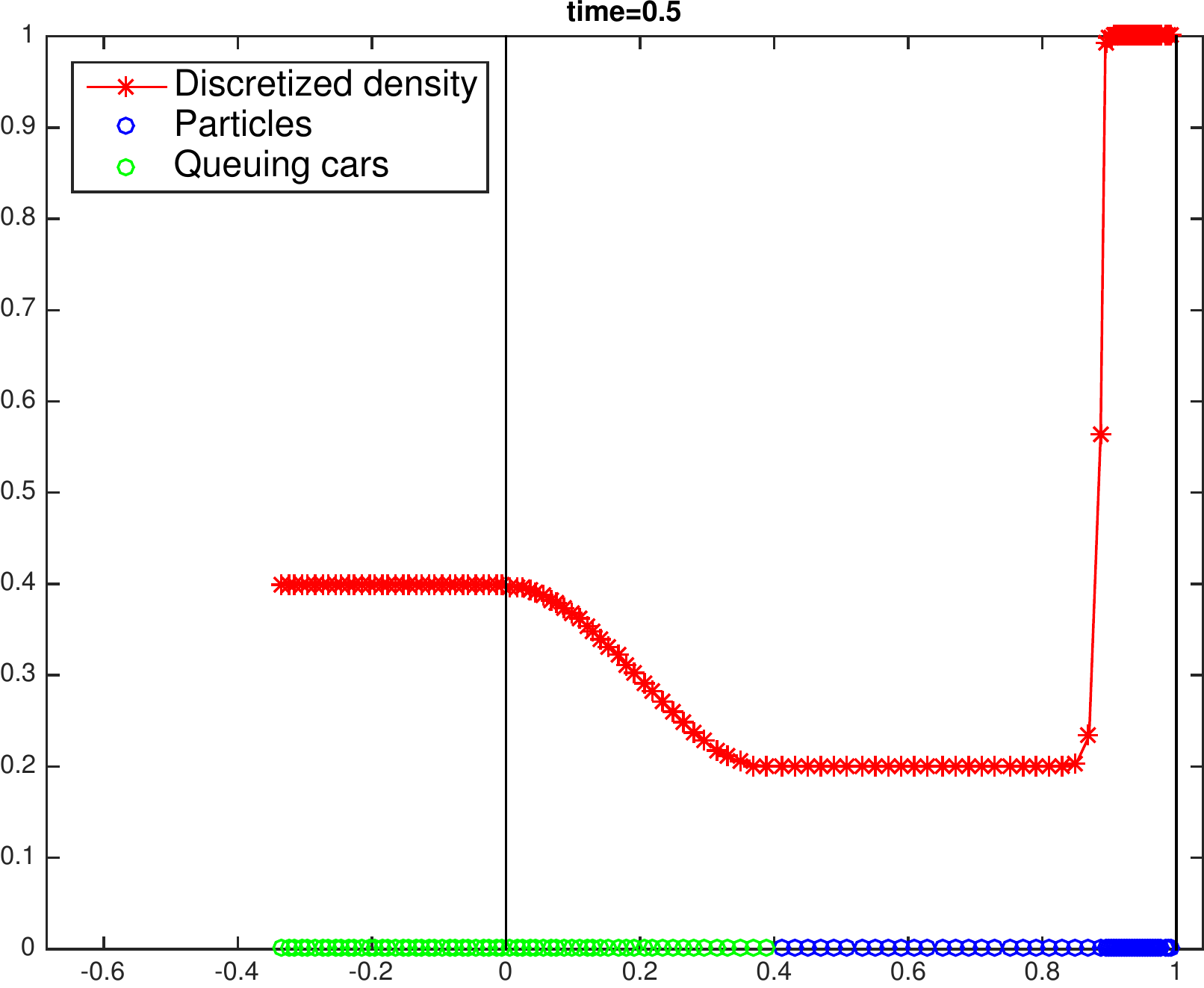}~
\includegraphics[width=.32\textwidth]{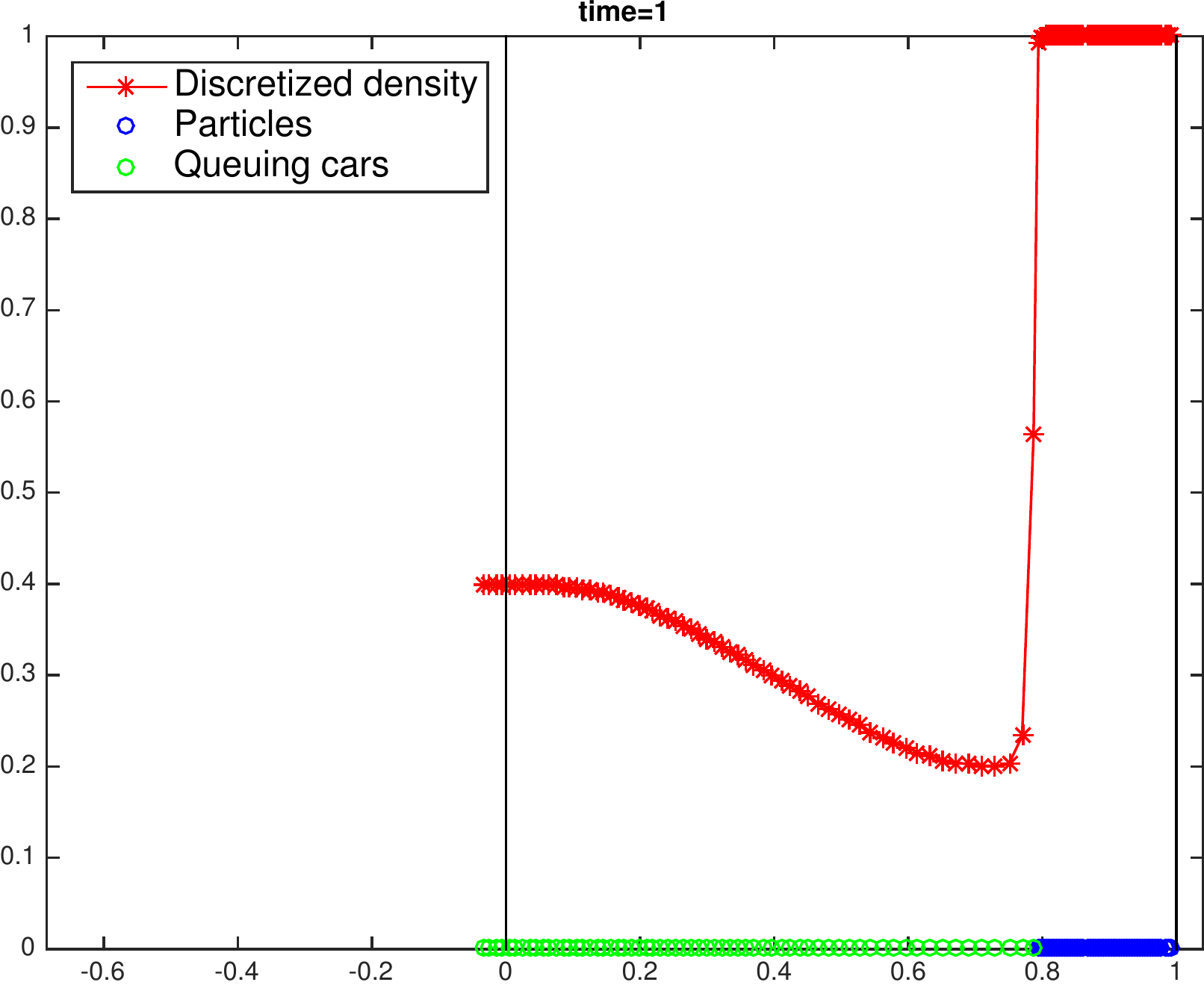}\\
\includegraphics[width=.4\textwidth]{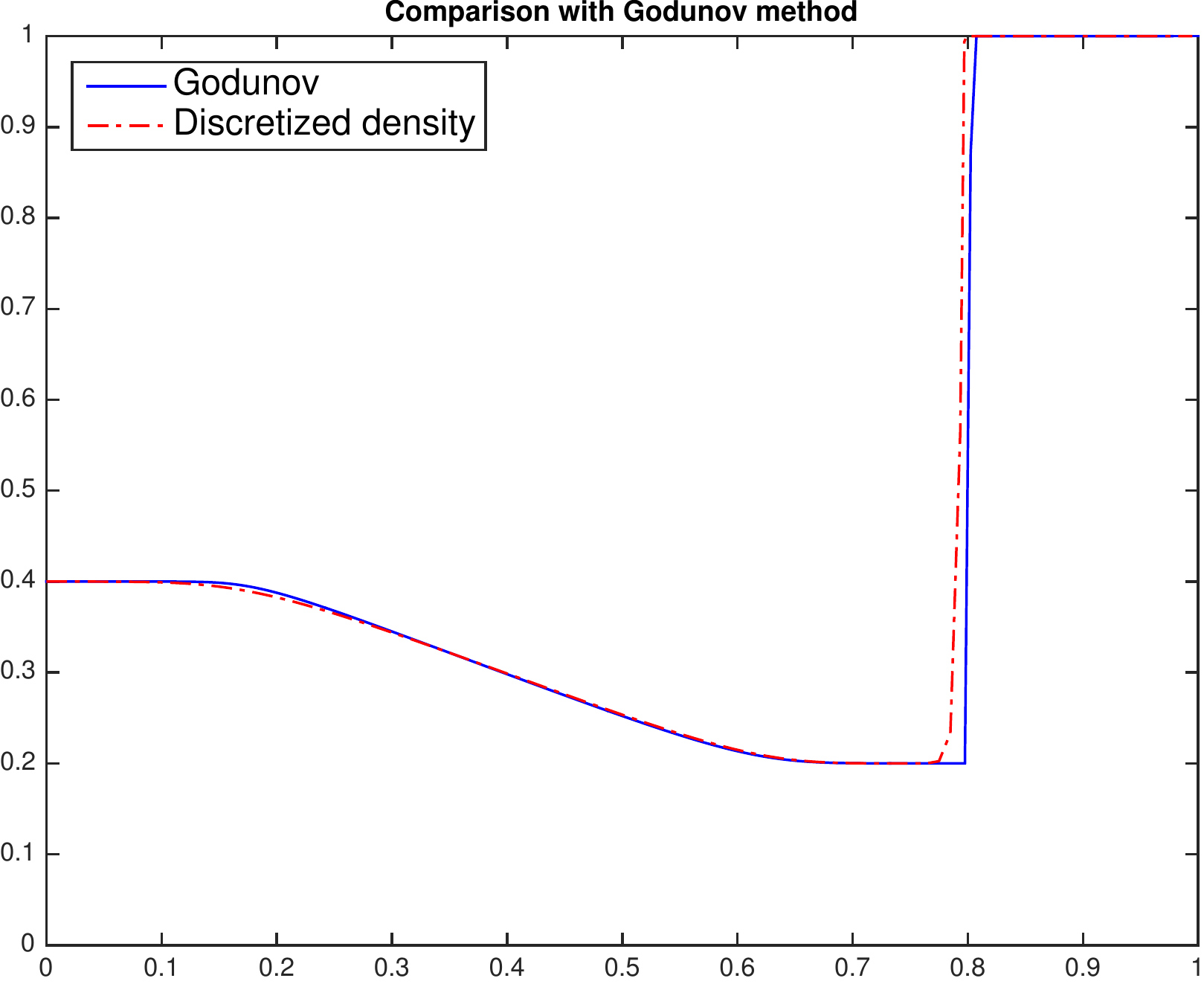}
\end{center}
\caption{In this situation a shock wave travels backward. The notation is similar to figure \ref{fig:Test1bc}}
\label{fig:Test2bc}
\end{figure}

\begin{figure}
\begin{center}
\includegraphics[width=.32\textwidth]{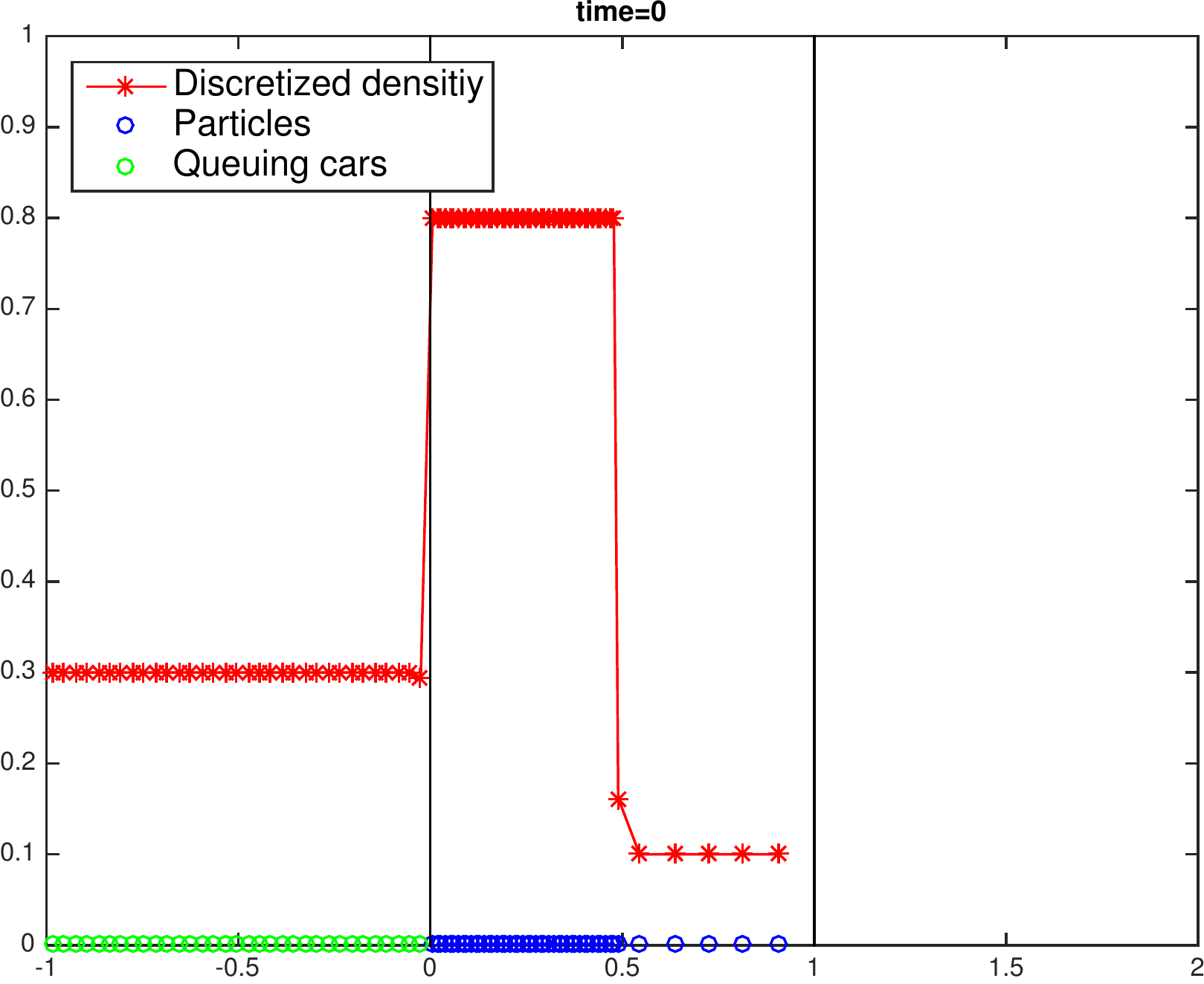}~
\includegraphics[width=.32\textwidth]{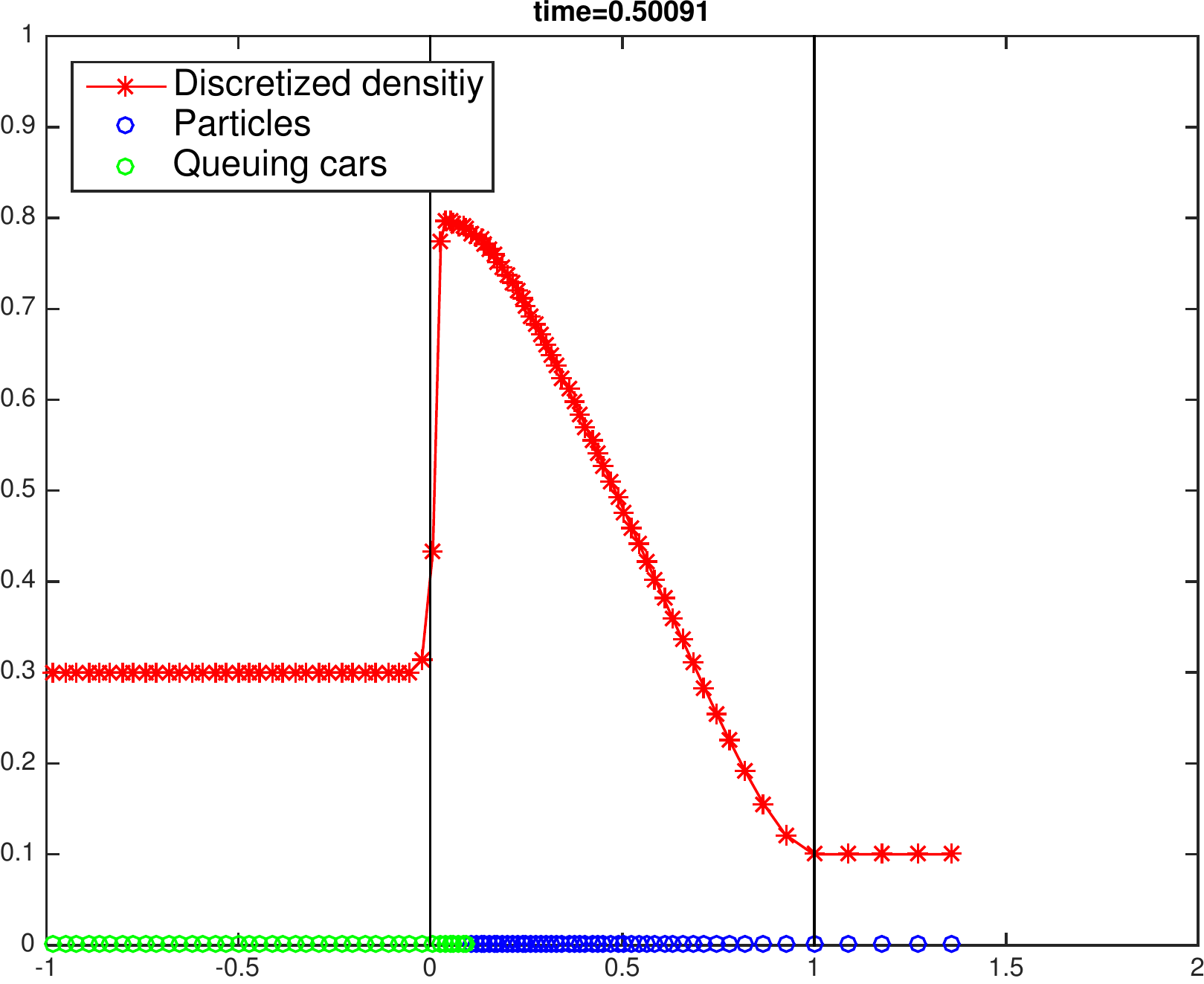}~
\includegraphics[width=.32\textwidth]{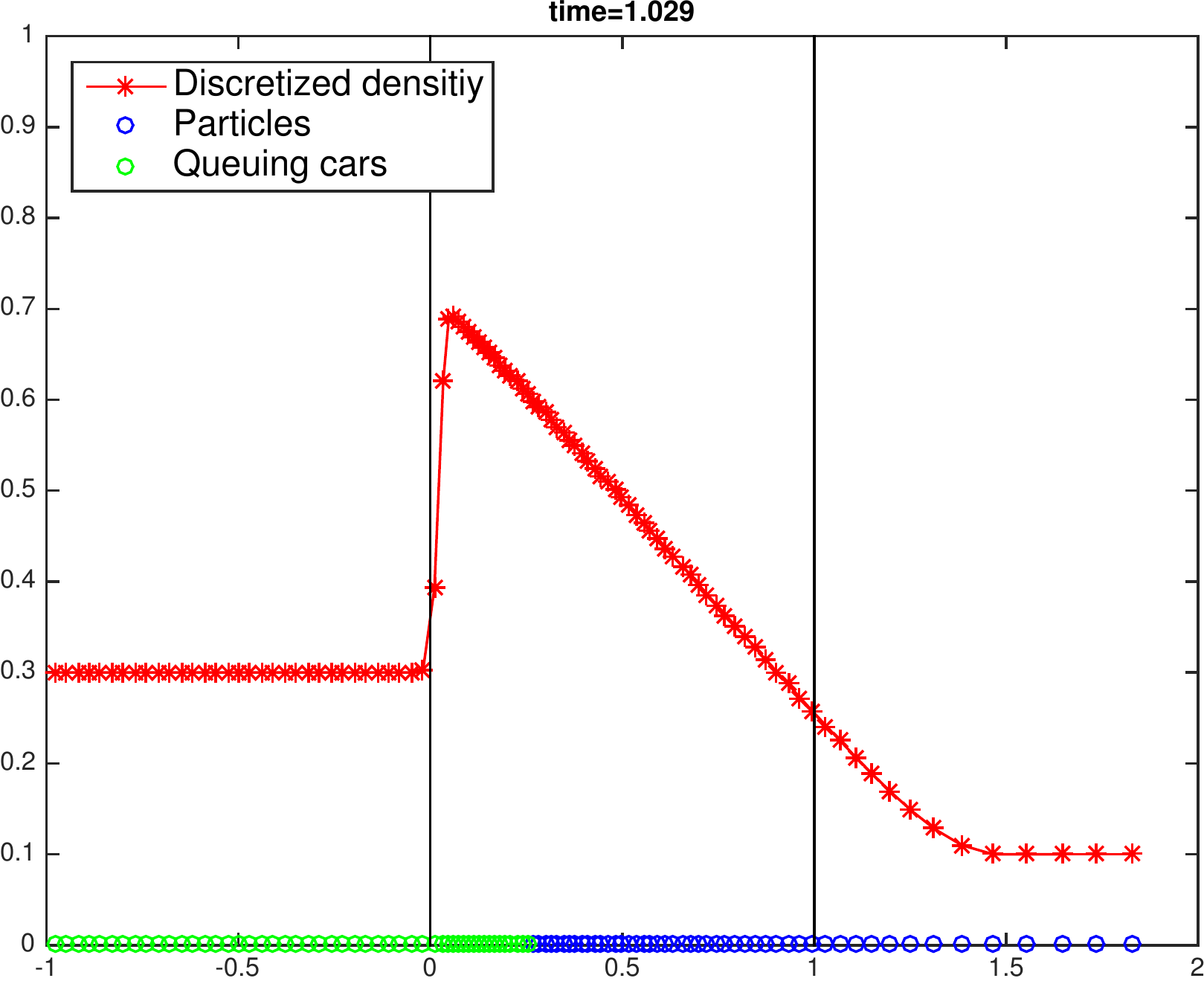}\\
\includegraphics[width=.32\textwidth]{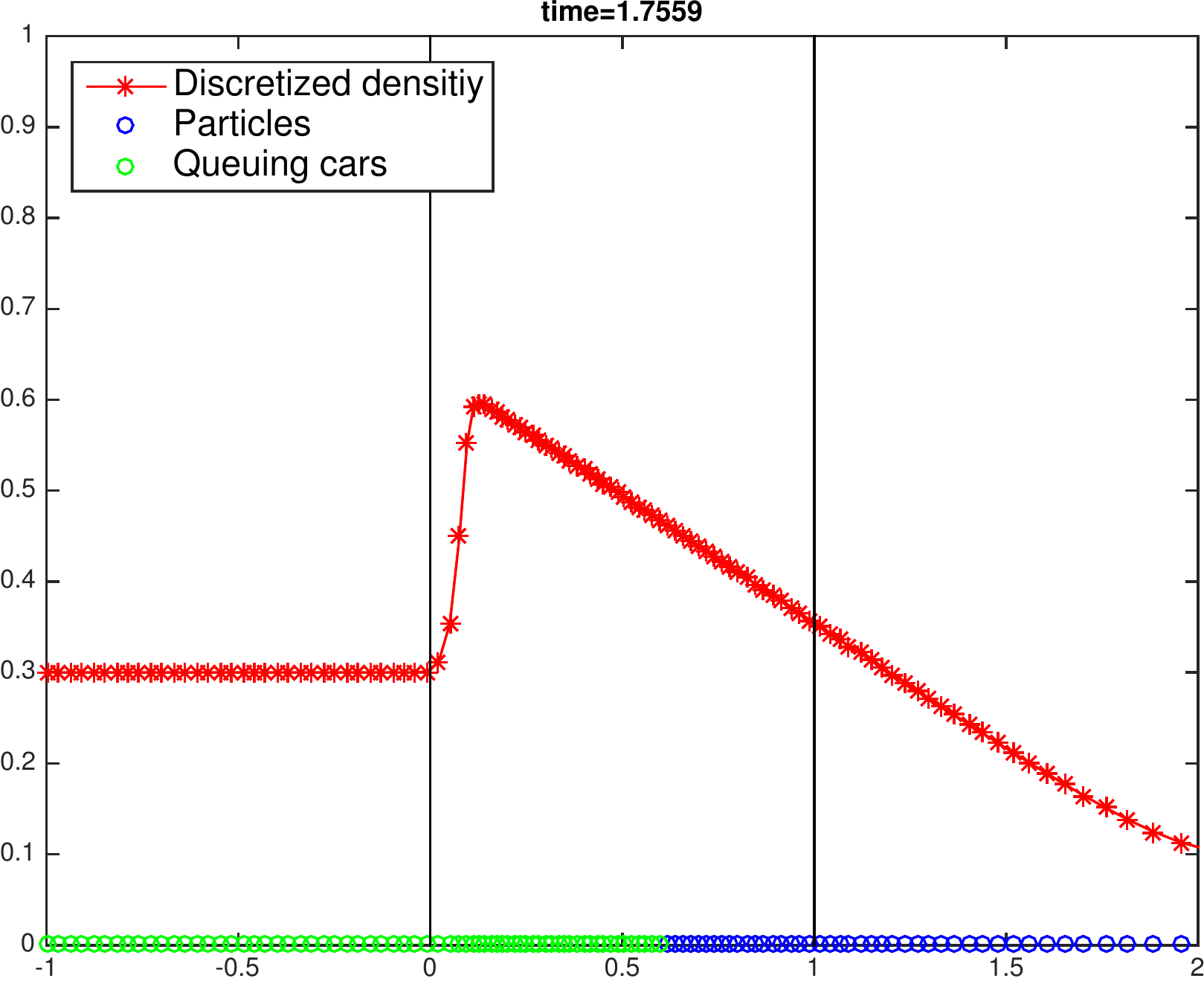}~
\includegraphics[width=.32\textwidth]{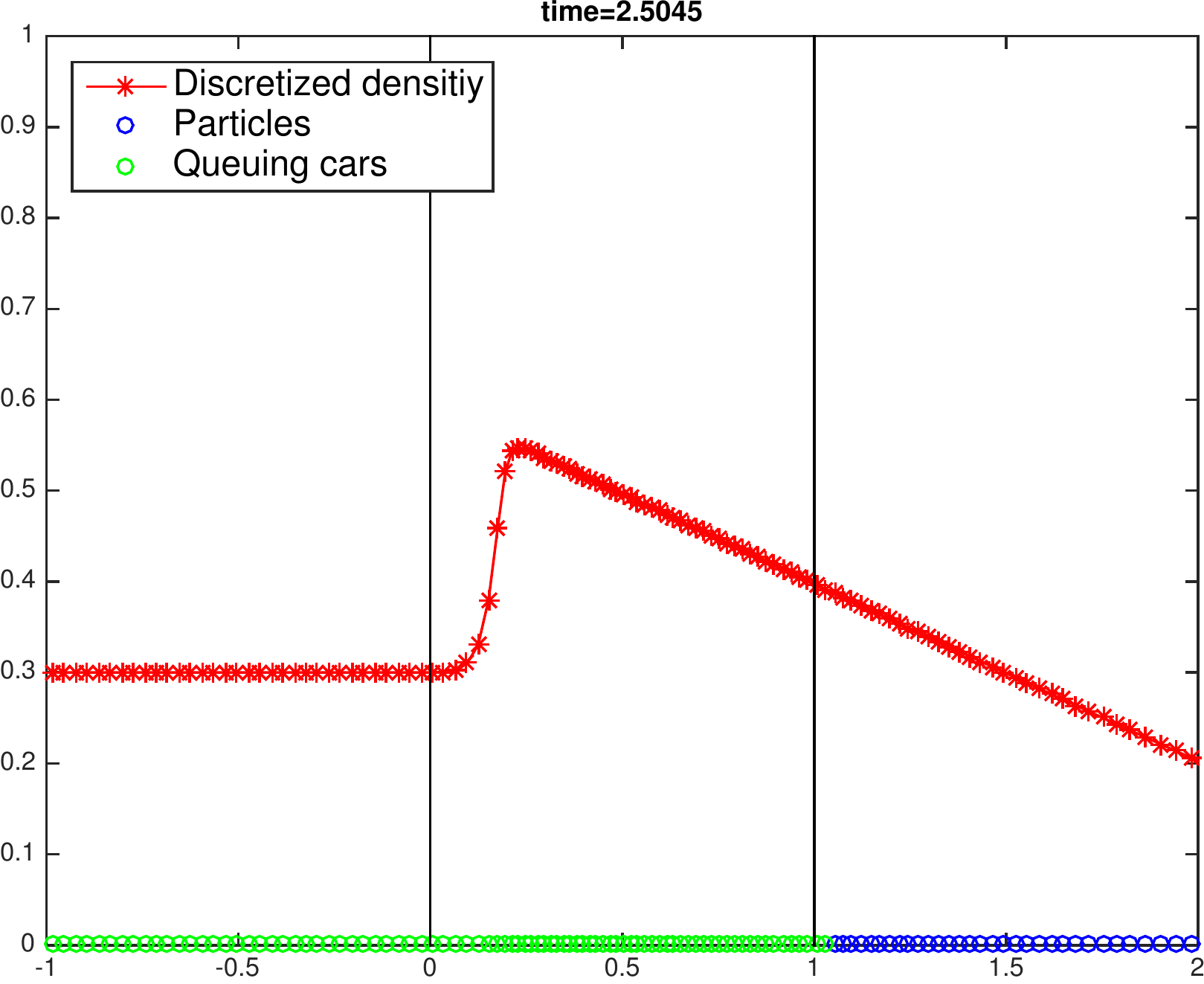}~
\includegraphics[width=.32\textwidth]{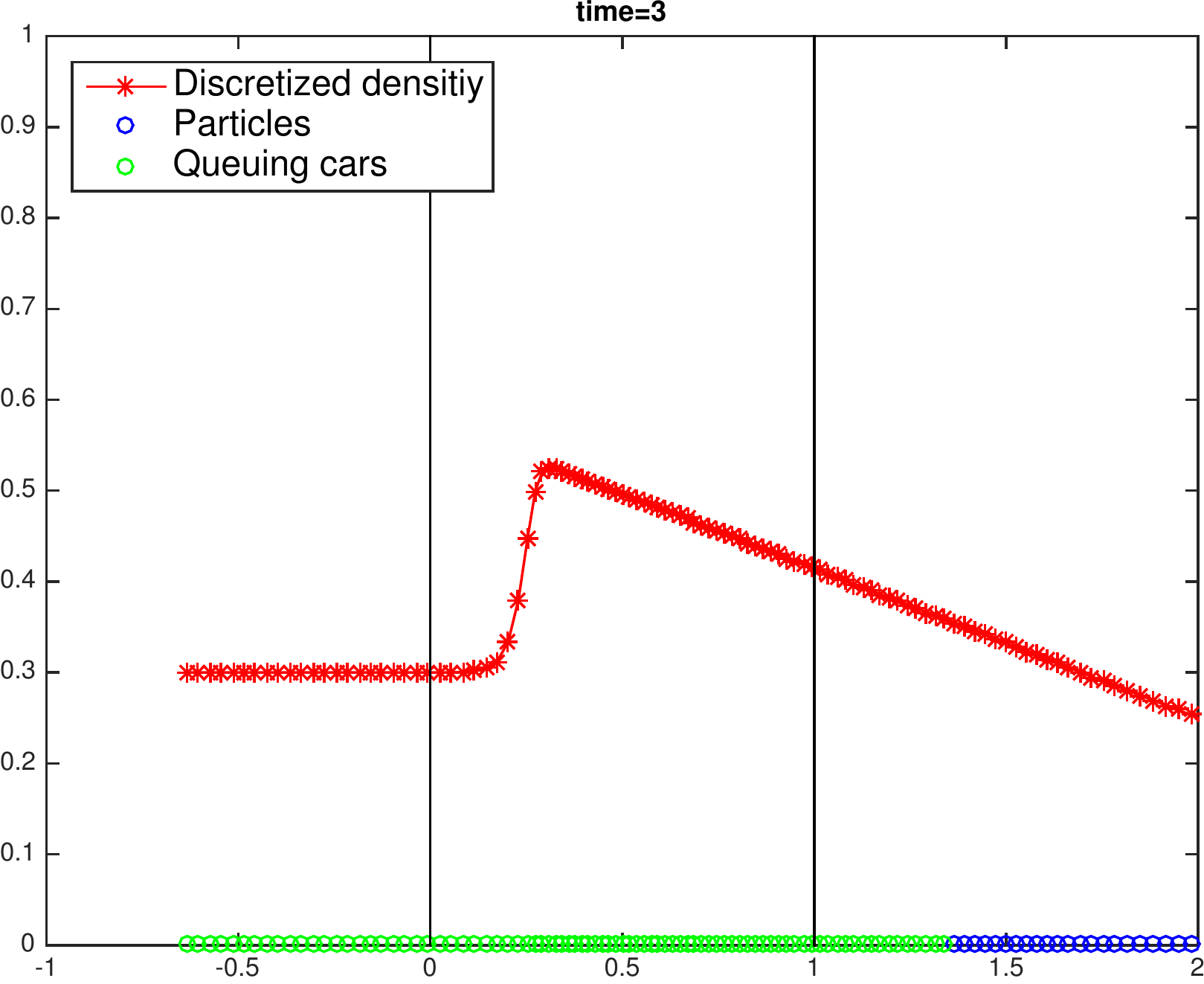}
\end{center}
\caption{Simulation for initial-boundary data given in \eqref{PP}.}
\label{fig:Test3bc}
\end{figure}

Time-dependent piecewise constant boundary data are considered in \figurename~\ref{fig:Test5bc}, where for $N=400$, we set
\begin{align}\label{PTD}
&\bar{\rho}(x)=0.3,
&\bar{\rho}_0(t)=\begin{cases}
 0.1 &\text{if } t\in [0,1], \\
 0.6 &\text{if } t\in (1,2],
\end{cases}&
&\bar{\rho}_1(t)=\begin{cases}
 0.9 &\text{if } t\in [0,1], \\
 0.1 &\text{if } t\in (1,2].
\end{cases}
\end{align}
Using these conditions one can built the exact solutions at time $T=2$
\begin{equation}\label{PTDex}
\rho_{ex}(2,x)=\begin{cases}
 0.5(1-x) &\text{if } x\in \left[0,0.8\right], \\
 0.1 &\text{if } x\in (0.8,0.2(9-2\sqrt{5})],\\
 0.5(2-x) &\text{if } x\in (0.2(9-2\sqrt{5}),1].
\end{cases}
\end{equation}
A comparison with the exact solution $\rho_{ex}$ is given in \figurename~\ref{fig:Test6bc}.

\begin{figure}
\begin{center}
\includegraphics[width=.32\textwidth]{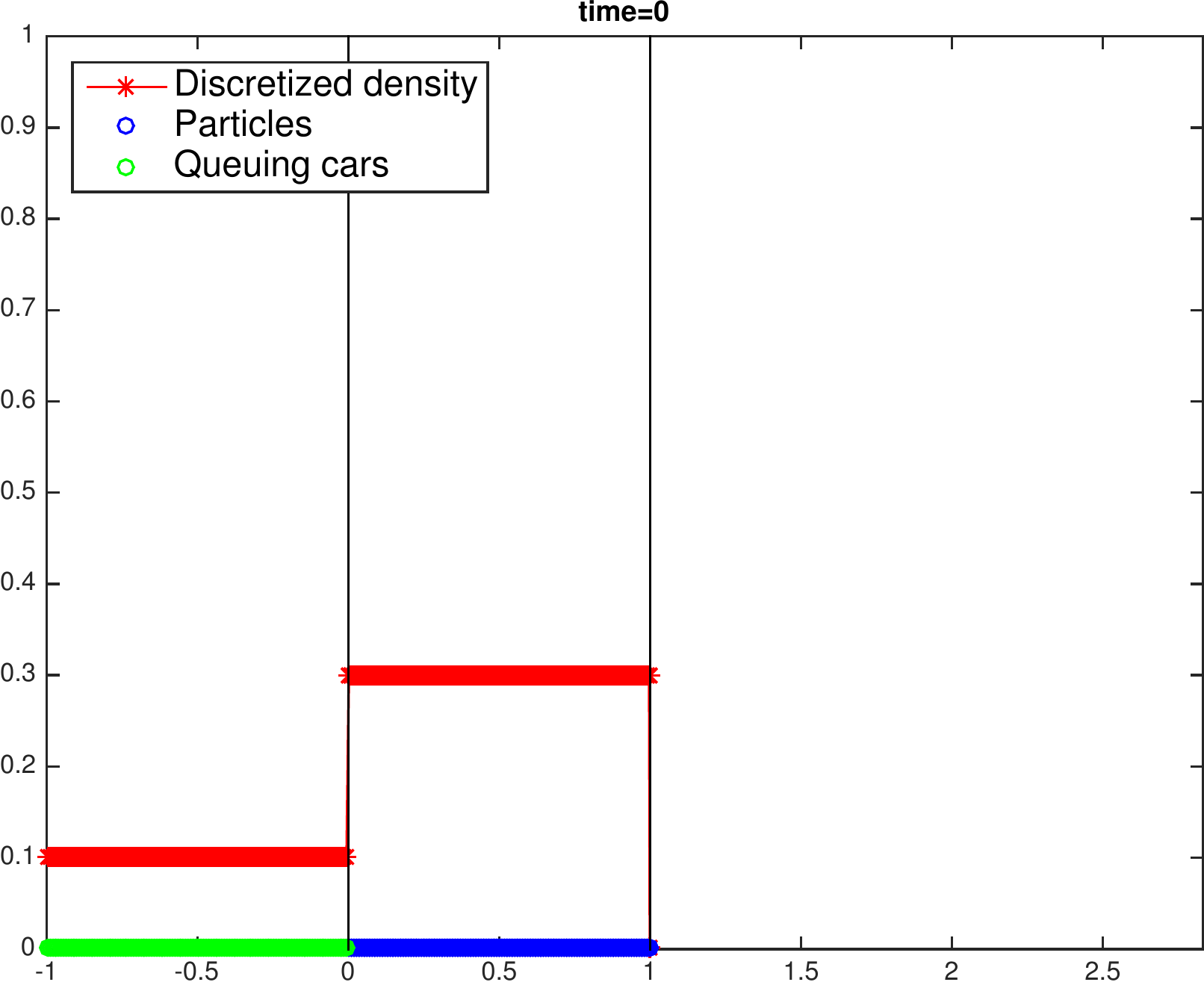}~
\includegraphics[width=.32\textwidth]{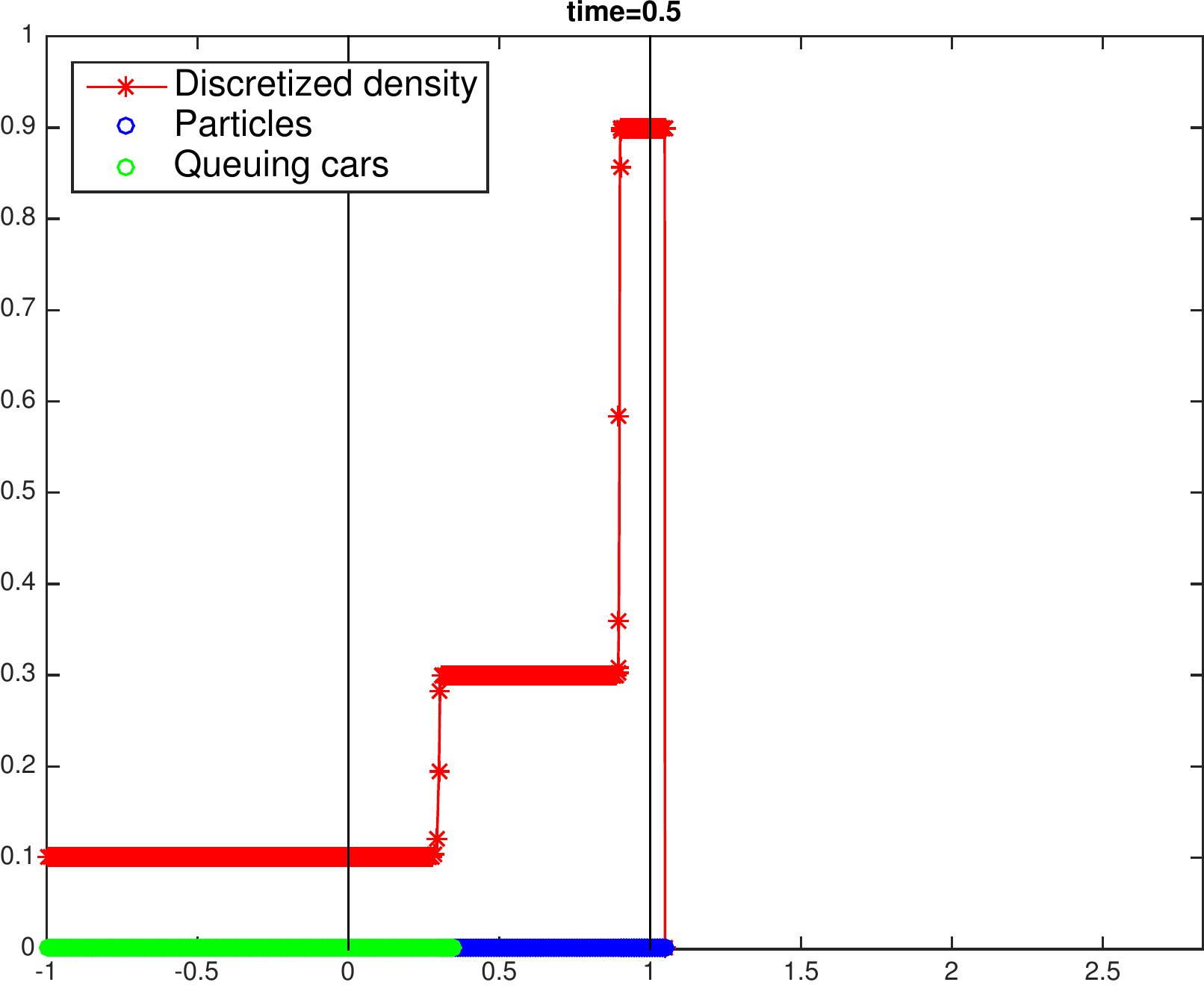}~
\includegraphics[width=.32\textwidth]{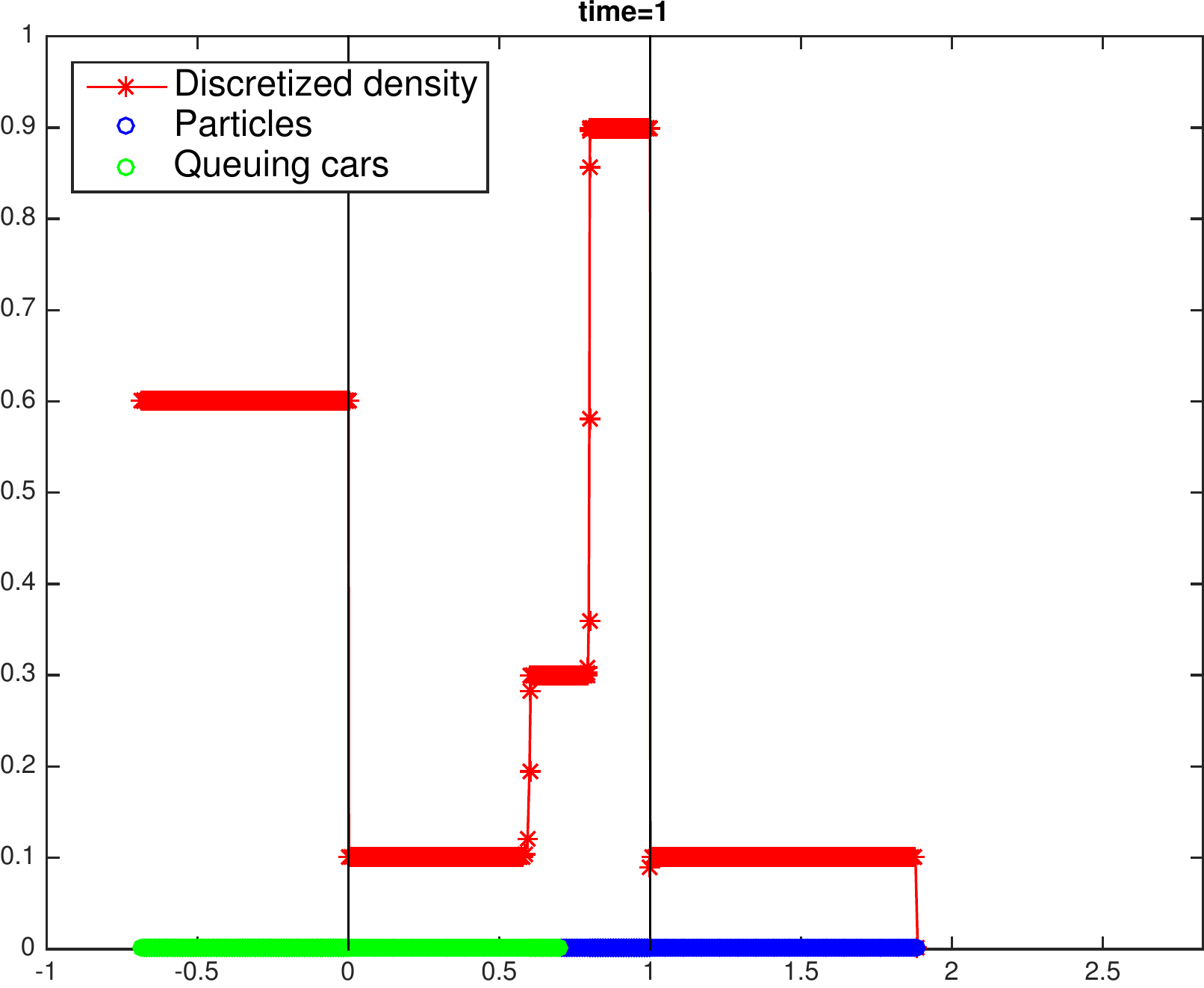}\\
\includegraphics[width=.32\textwidth]{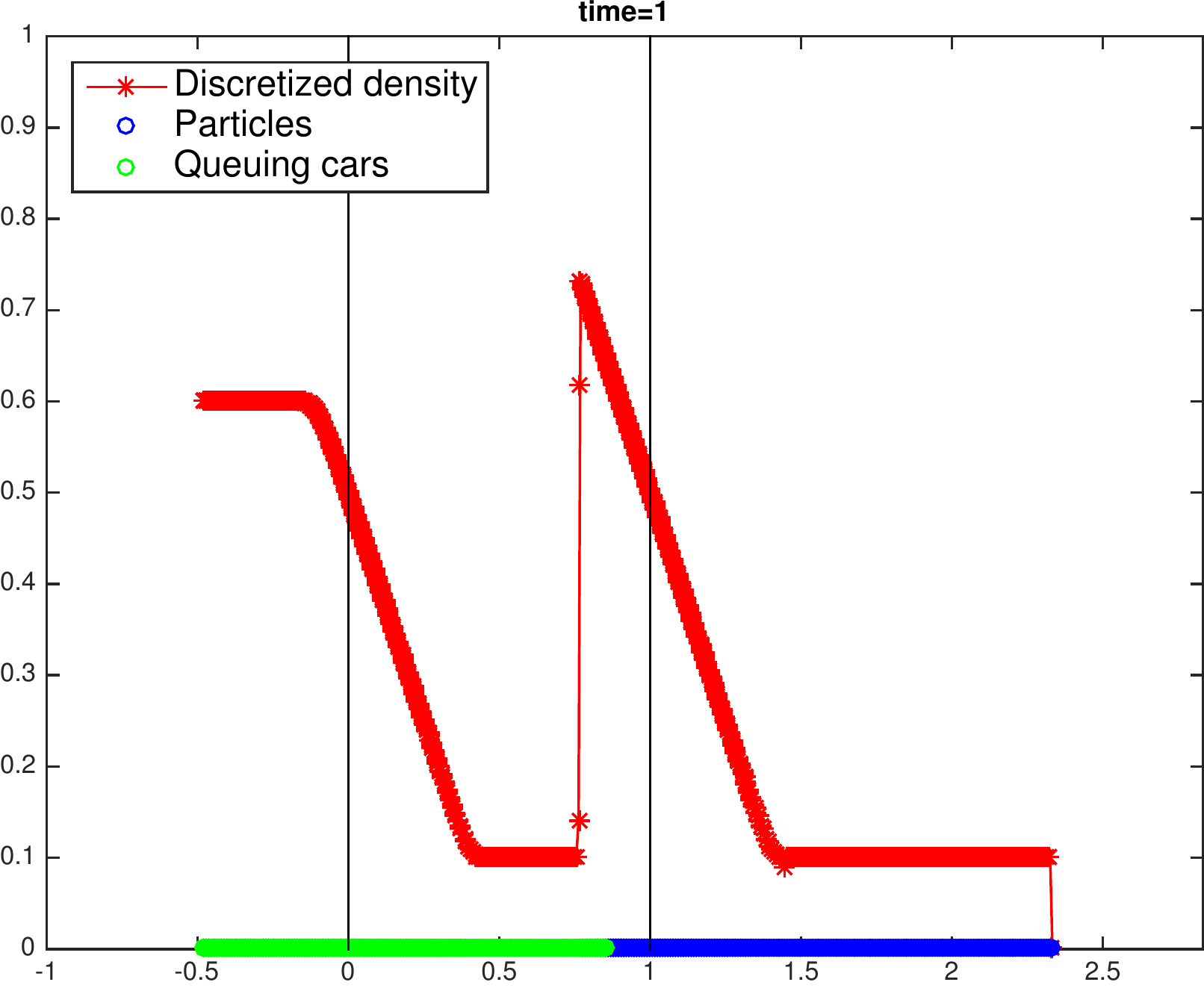}~
\includegraphics[width=.32\textwidth]{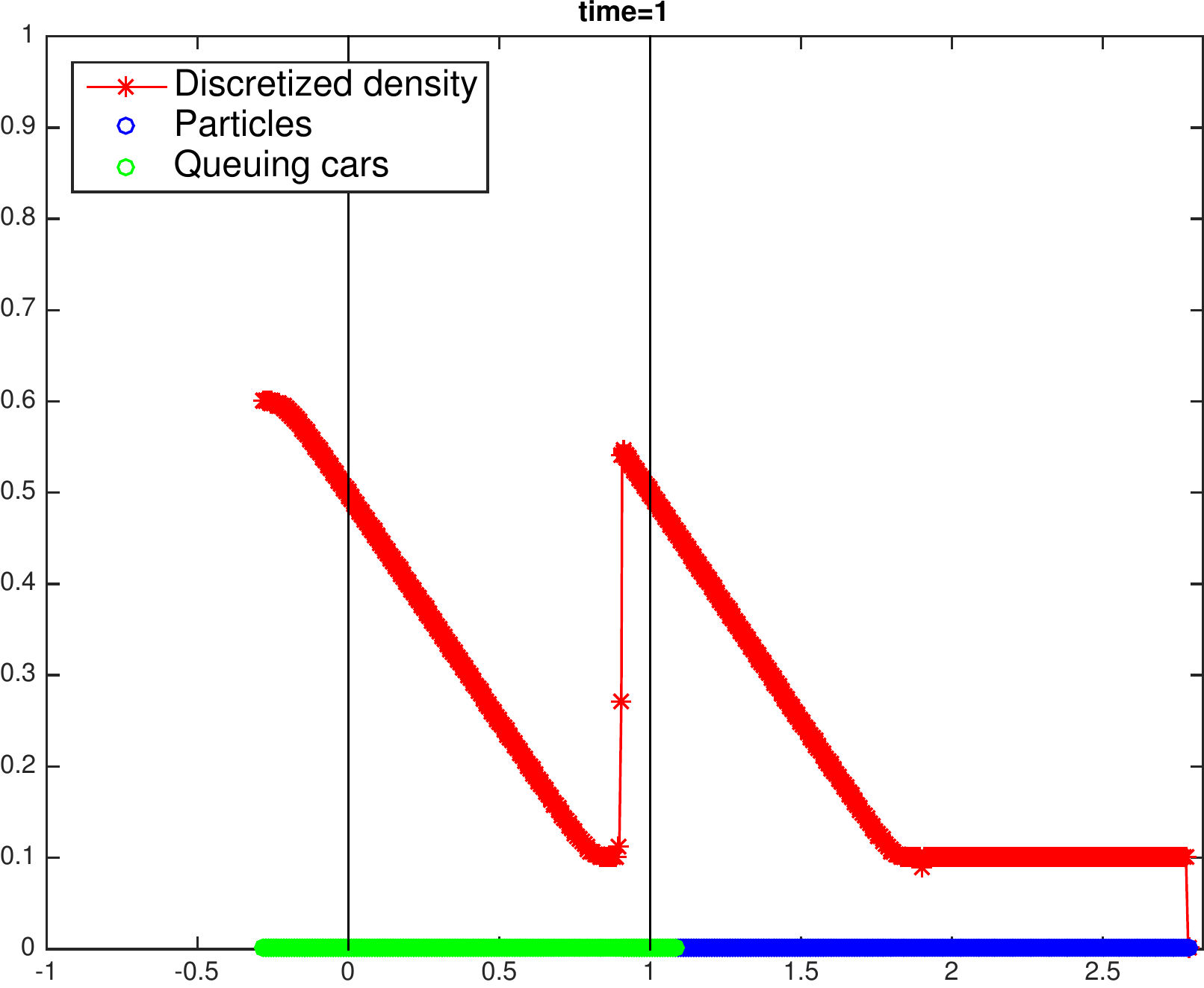}
\end{center}
\caption{Simulation for initial-boundary data given in \eqref{PTD}.}
\label{fig:Test5bc}
\end{figure}

\begin{figure}[htbp]\centering
\includegraphics[width=.4\textwidth]{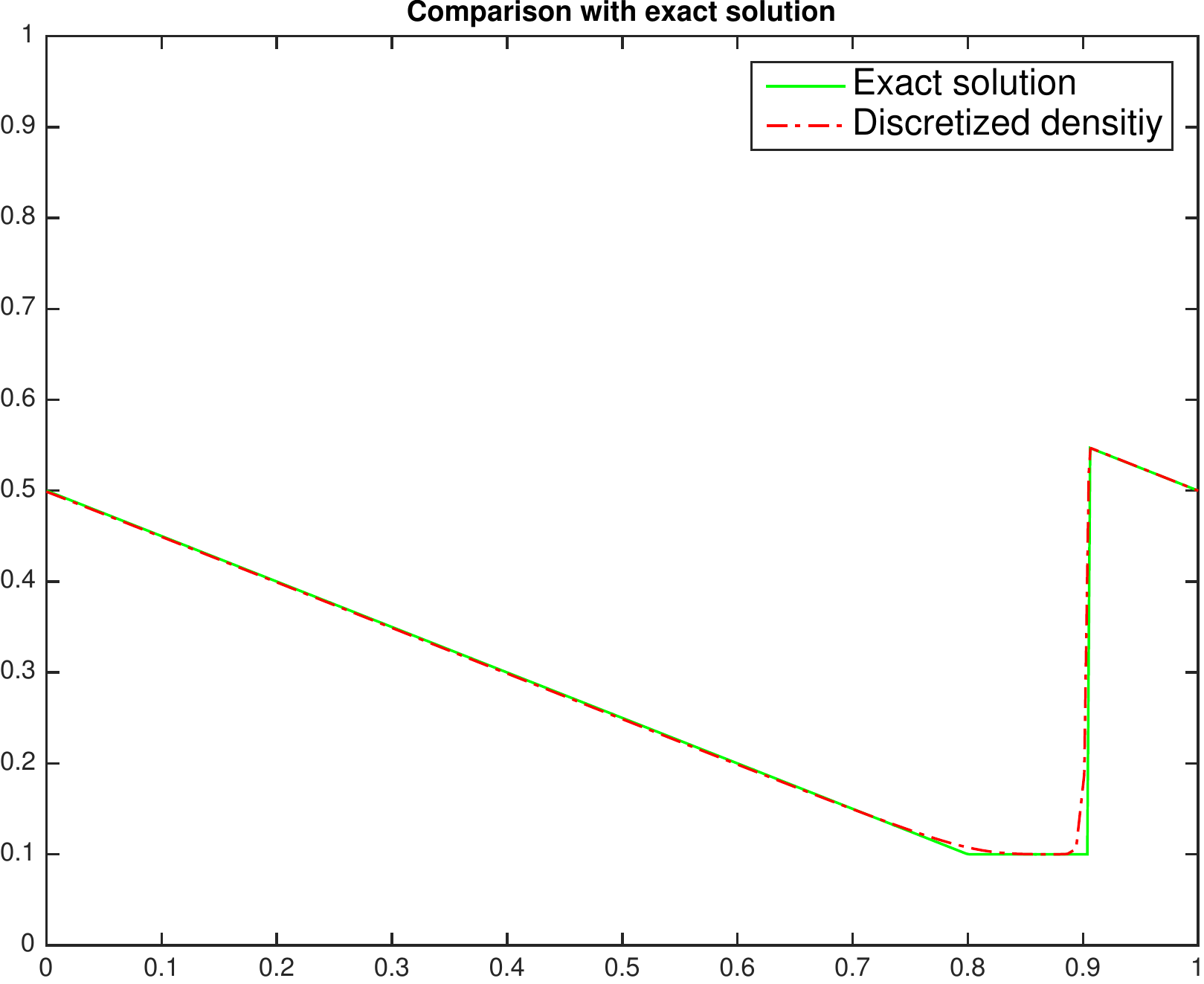}
\caption{Comparison between the approximate and the exact solution given in \eqref{PTDex} to the IBVP \eqref{eq:ibvp} with data given in \eqref{PTD}. \label{fig:Test6bc}}
\end{figure}

\subsection{The ARZ model}

For the ARZ model \eqref{eq:CauchyARZ}, we consider two examples of Riemann problem.
The first one coincides with that one shown in \cite[Section~4]{chalonsgoatin2007} and is used to check the ability of the scheme to deal with contact discontinuities.
The second one is the example given in \cite[Section~5]{AKMR2002} and is used to check the ability of the scheme to deal with vacuum. The qualitative results corresponding to $N=200$ and final time $T=0.2$ for the Test 1 and $T=1$ for Test 4 are presented in \figurename~\ref{fig:Test 1,2}.

\begin{figure}

\begin{minipage}[c]{.45\textwidth}
\begin{center}
\begin{tabular}{c}
    Test 1\\
  \hline
    $p(\rho)=1.4427\,\log(\rho)$,\\
    $\rho_\ell=0.5,~v_\ell=1.2$,\\
    $\rho_r=0.1,~v_r=1.6$,\\
  \hline
  \end{tabular}
\end{center}
\end{minipage}
\hfill
\begin{minipage}[c]{.45\textwidth}
\begin{center}
\begin{tabular}{c}
    Test 2\\
  \hline
    $p(\rho)=6\rho$,\\
    $\rho_\ell=0.05,~v_\ell=0.05$,\\
    $\rho_r=0.05,~v_r=0.5$,\\
  \hline
  \end{tabular}
\end{center}
\end{minipage}
\bigskip\\
\begin{minipage}[c]{.45\textwidth}
\begin{center}
\includegraphics[width=\textwidth]{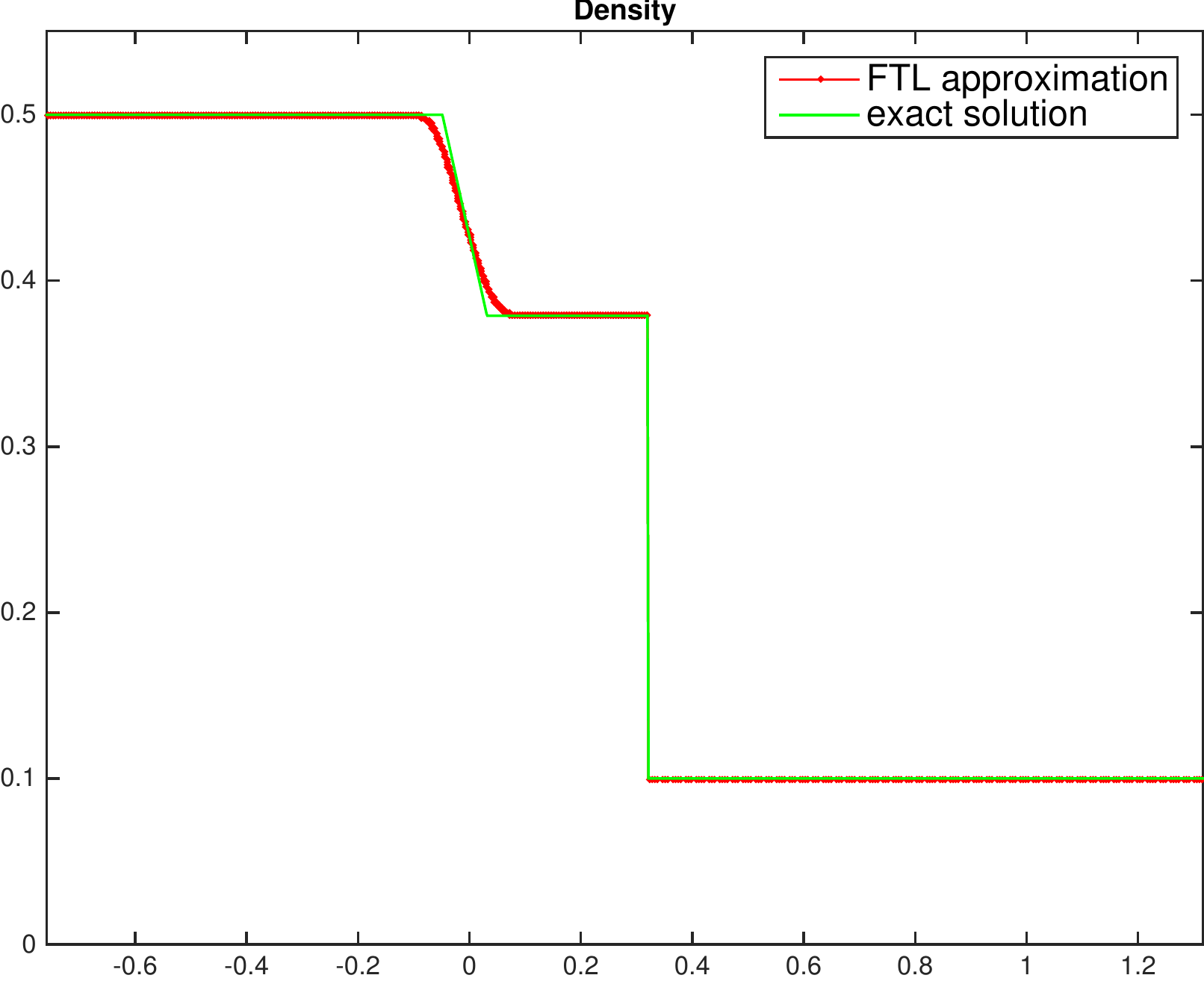}
\end{center}
\end{minipage}
\hfill
\begin{minipage}[c]{.45\textwidth}
\begin{center}
\includegraphics[width=\textwidth]{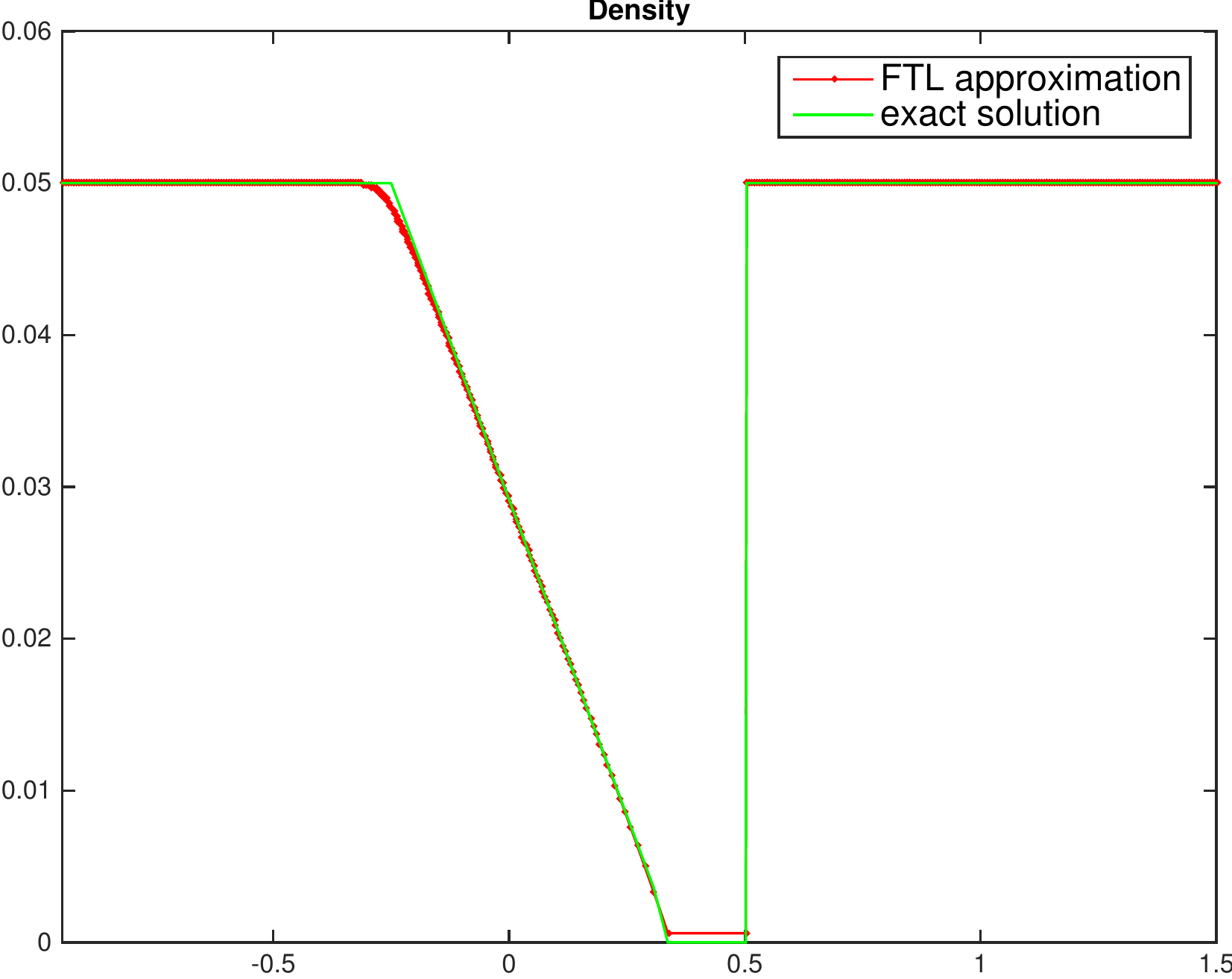}
\end{center}
\end{minipage}
\bigskip\\
\begin{minipage}[c]{.45\textwidth}
\begin{center}
\includegraphics[width=\textwidth]{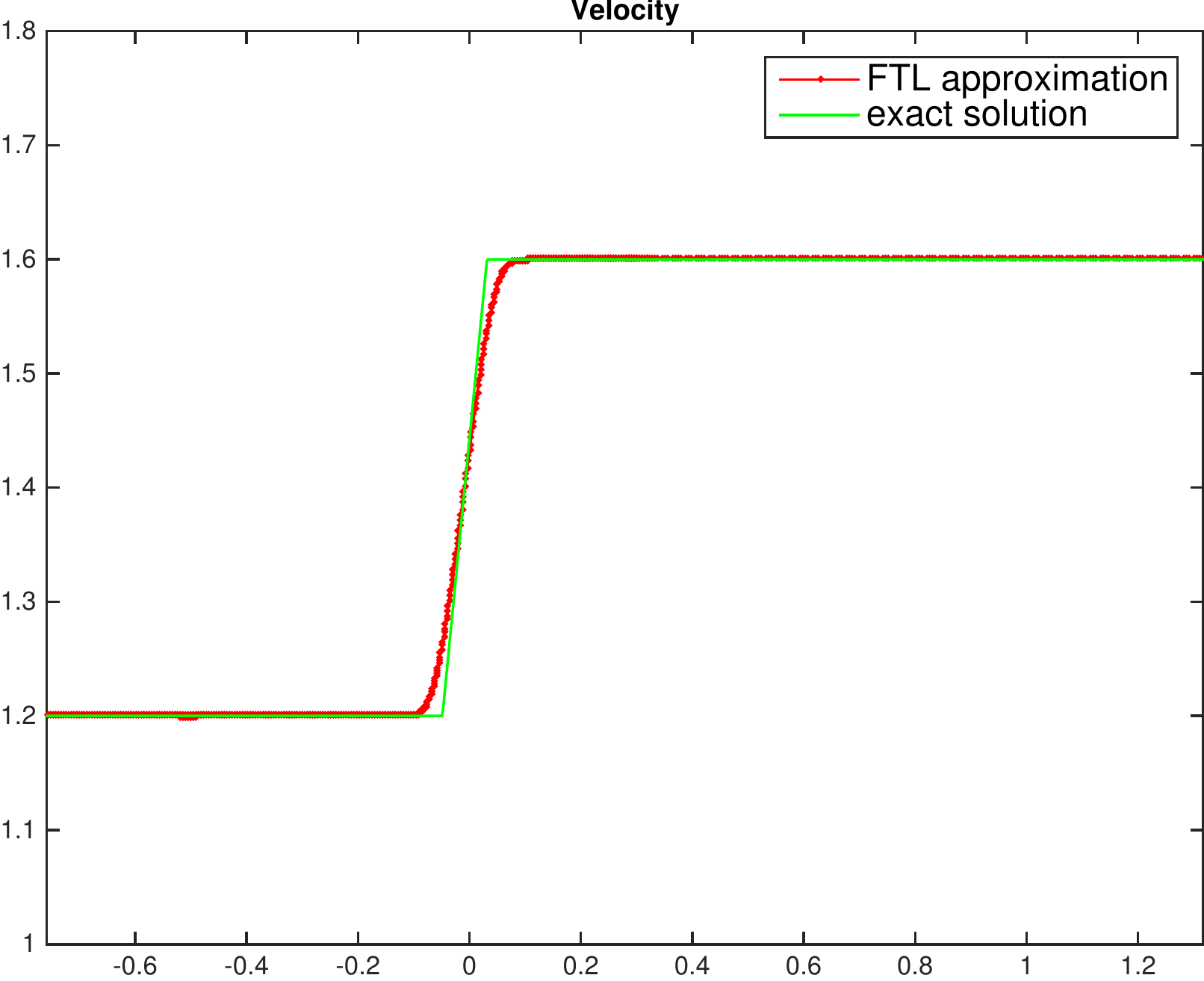}
\end{center}
\end{minipage}
\hfill
\begin{minipage}[c]{.45\textwidth}
\begin{center}
\includegraphics[width=\textwidth]{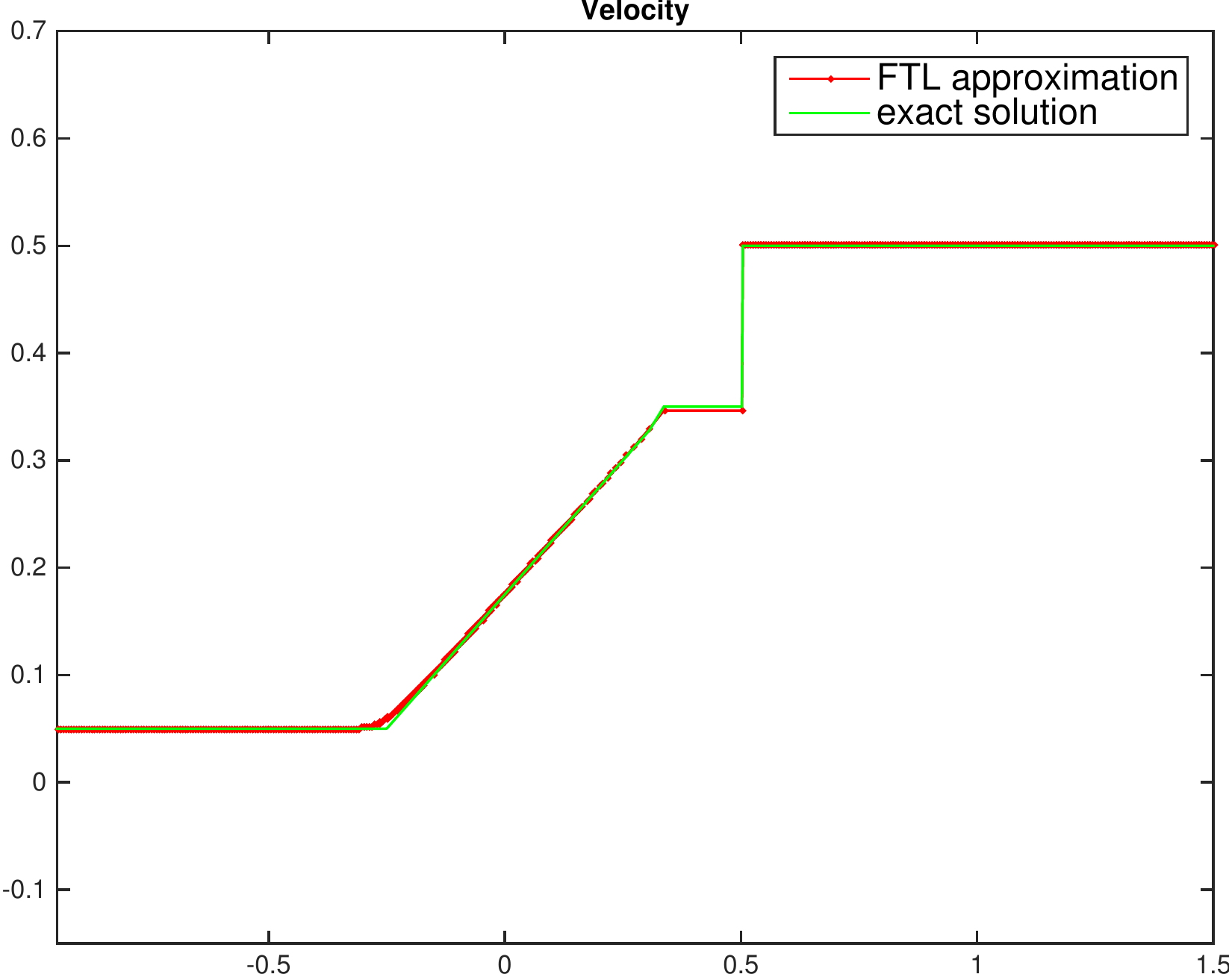}
\end{center}
\end{minipage}
\bigskip\\
\begin{minipage}[c]{.45\textwidth}
\begin{center}
\includegraphics[width=\textwidth]{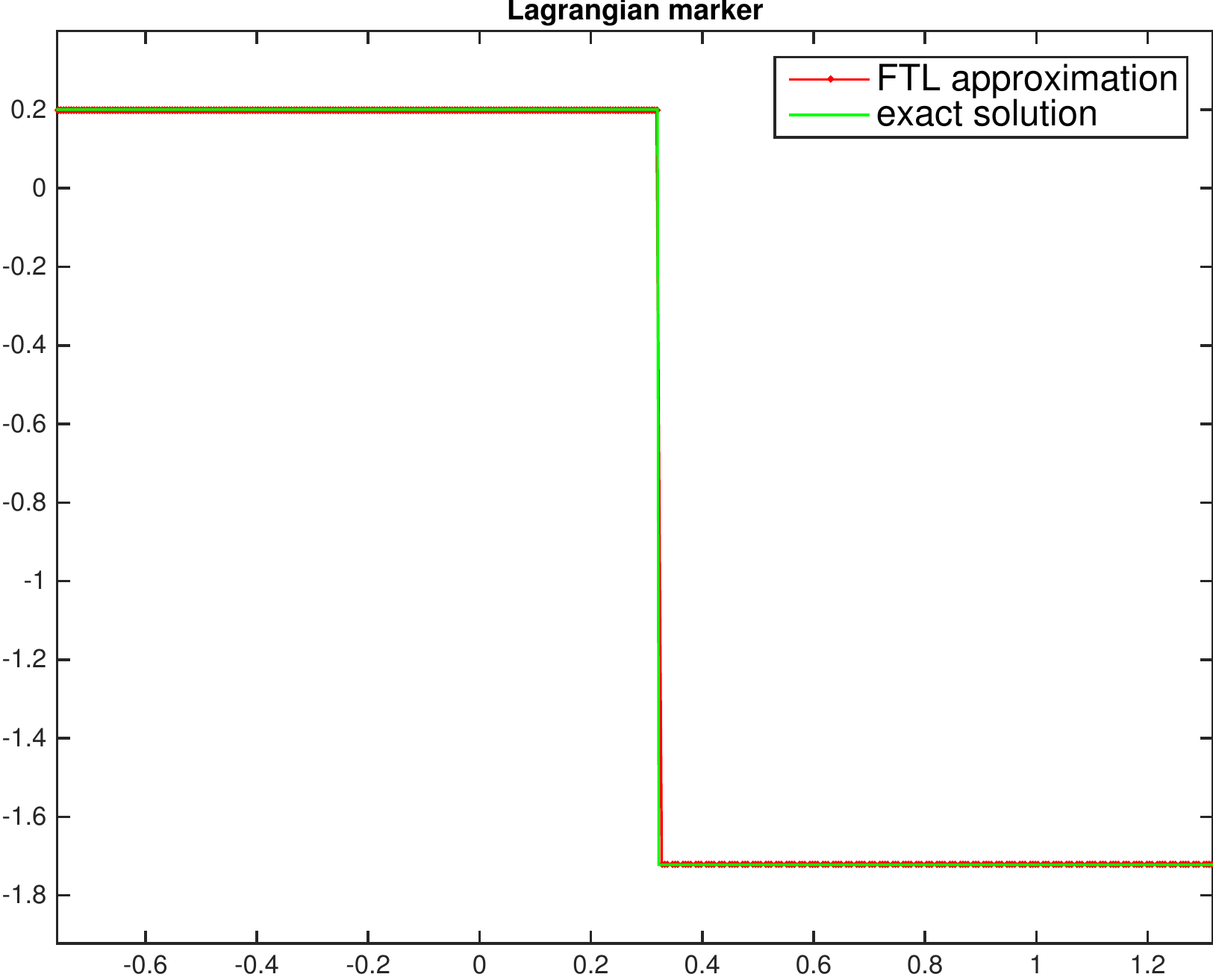}
\end{center}
\end{minipage}
\hfill
\begin{minipage}[c]{.45\textwidth}
\begin{center}
\includegraphics[width=\textwidth]{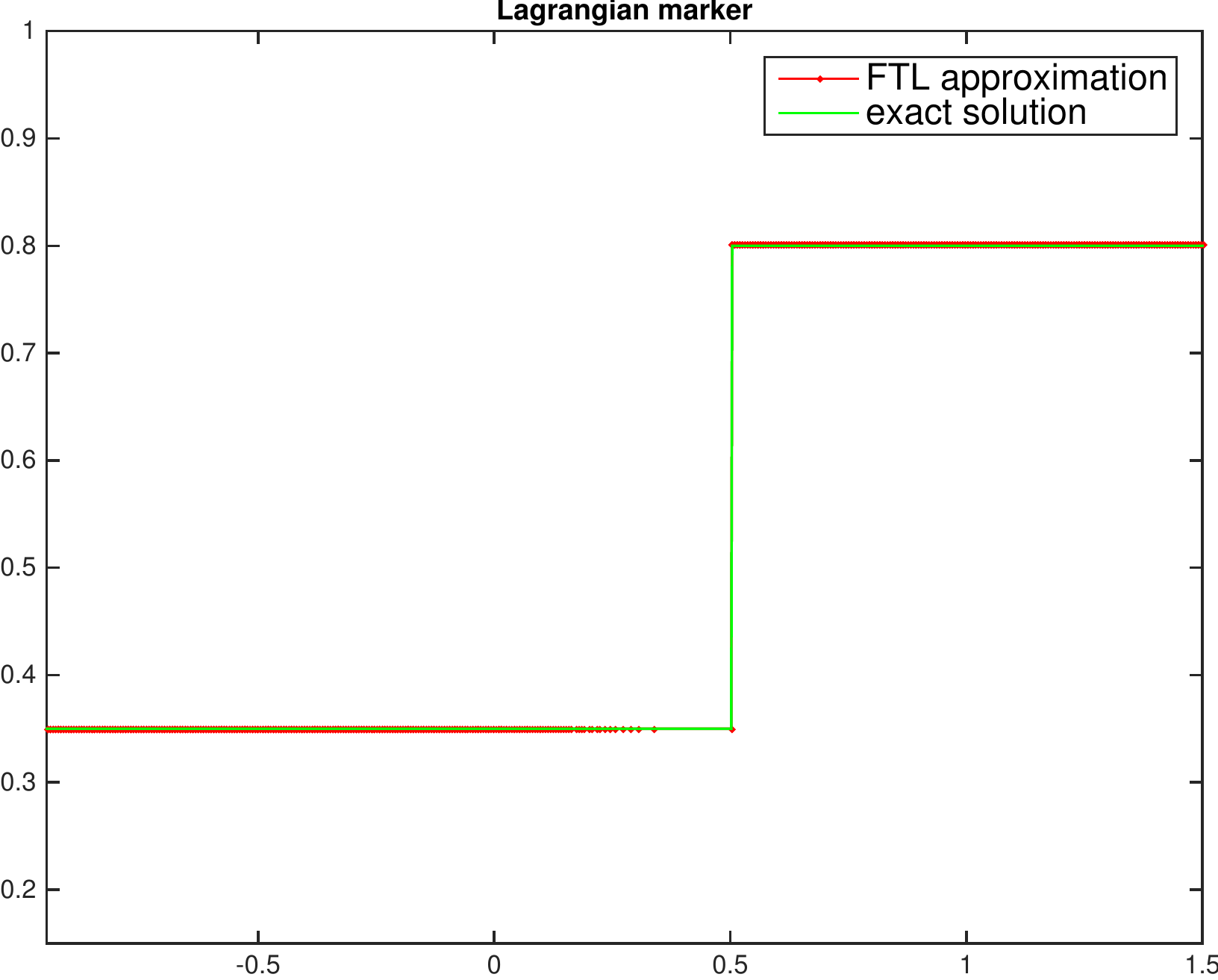}
\end{center}
\end{minipage}
\caption{Left column for Test 1 and right column for Test 2, with $N=200$. \label{fig:Test 1,2}}
\end{figure}

\subsection{The Hughes model for pedestrian movements}\label{sec:numerics-Hughes}

In this section we compare our discrete density for the Hughes model \eqref{eq:model} with approximate solutions obtained via Godunov scheme.
About the boundary conditions, as pointed out in Section~\ref{sec:particle}, we do not impose any boundary condition in the particle method.
For the Godunov method we create two extra ghost cells, one just at the left of $-1$ and one just at the right of $1$, setting $\rho = 0$ in those cells, to mimic `perfect exits'.  In the example reported, the choice for the cost function is $c(\rho)\doteq 1/v(\rho)$, with $v(\rho)\doteq1-\rho$, and we show time evolution of the discrete density $\rho^n$ given by \eqref{eq:density} in the domain $\left(-1,1\right)$. 
In order to compare our method with the tests performed in \cite{DiFrancescoMarkowichPietschmannWolfram, GoatinMimault}, in \figurename~\ref{fig:Test 5} we consider the three-step initial condition
\begin{equation}\label{Steps}
\bar{\rho}(x)=\begin{cases}
 0.8 &\text{if } -0.8< x\leq -0.5, \\
 0.6 &\text{if } -0.3< x\leq 0.3, \\
 0.9 &\text{if } 0.4< x\leq 0.75, \\
 0 & \text{otherwise}.
\end{cases}
\end{equation}
As shown in \figurename~\ref{fig:Test 5} and \figurename~\ref{fig:Comp}, this example exhibits the typical \emph{mass transfer} phenomenon occurring when the turning point $\xi(t)$ is not surrounded by a vacuum region. In such a case, particles are crossing $\xi(t)$, and a \emph{non-classical shock} starts from $\xi(t)$, see \cite[Remark~5]{AmadoriDiFrancesco}.
In the example we set $N=200$ and plot the particle positions and the discrete densities.
\begin{figure}
\centering
\includegraphics[width=.32\textwidth]{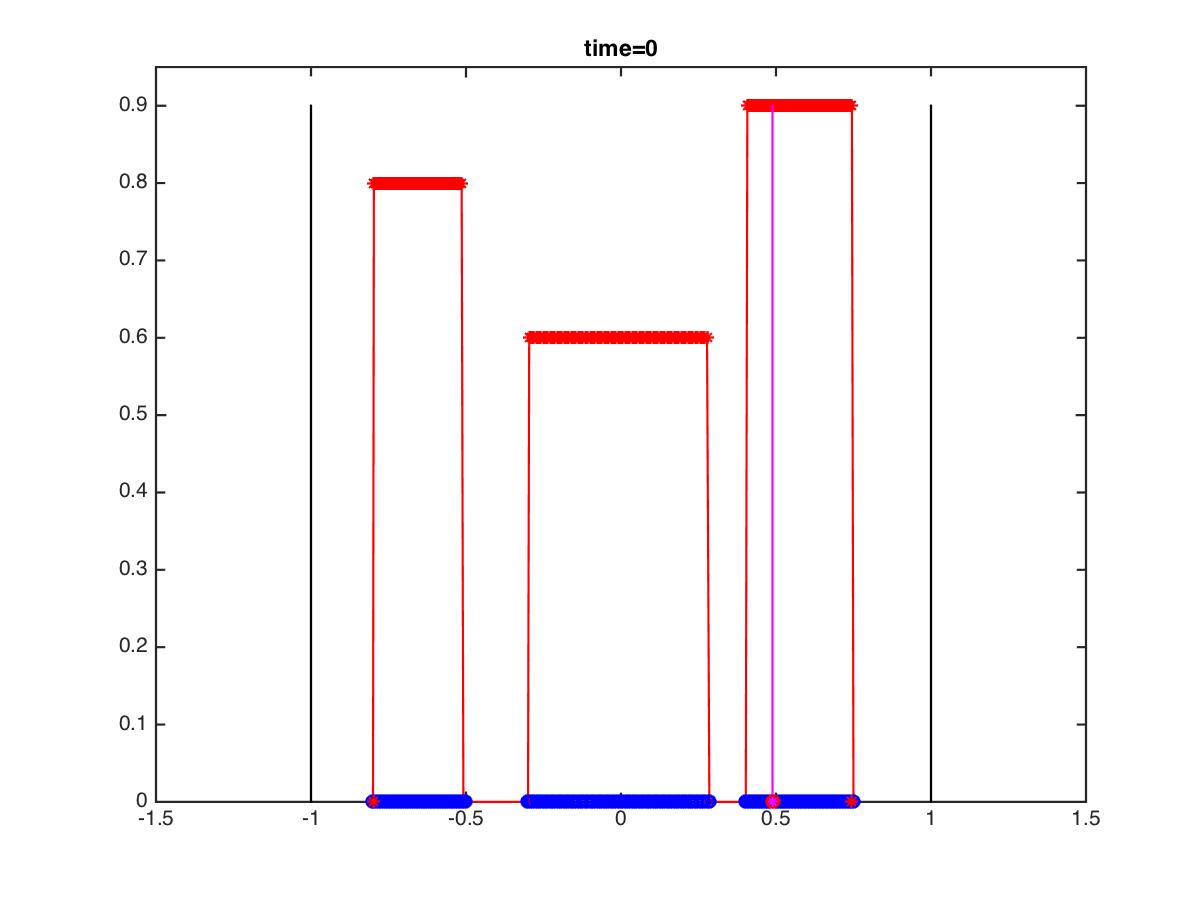}
\includegraphics[width=.32\textwidth]{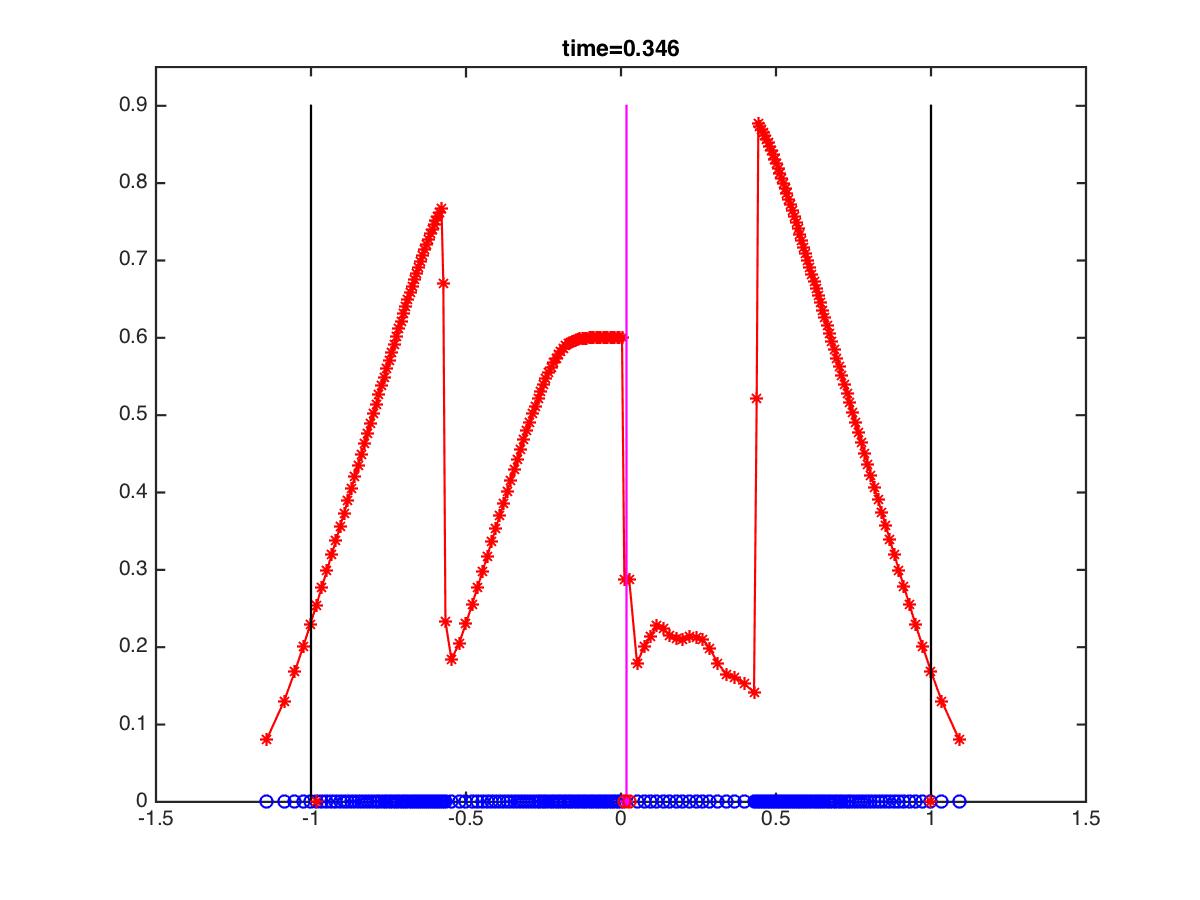}
\includegraphics[width=.32\textwidth]{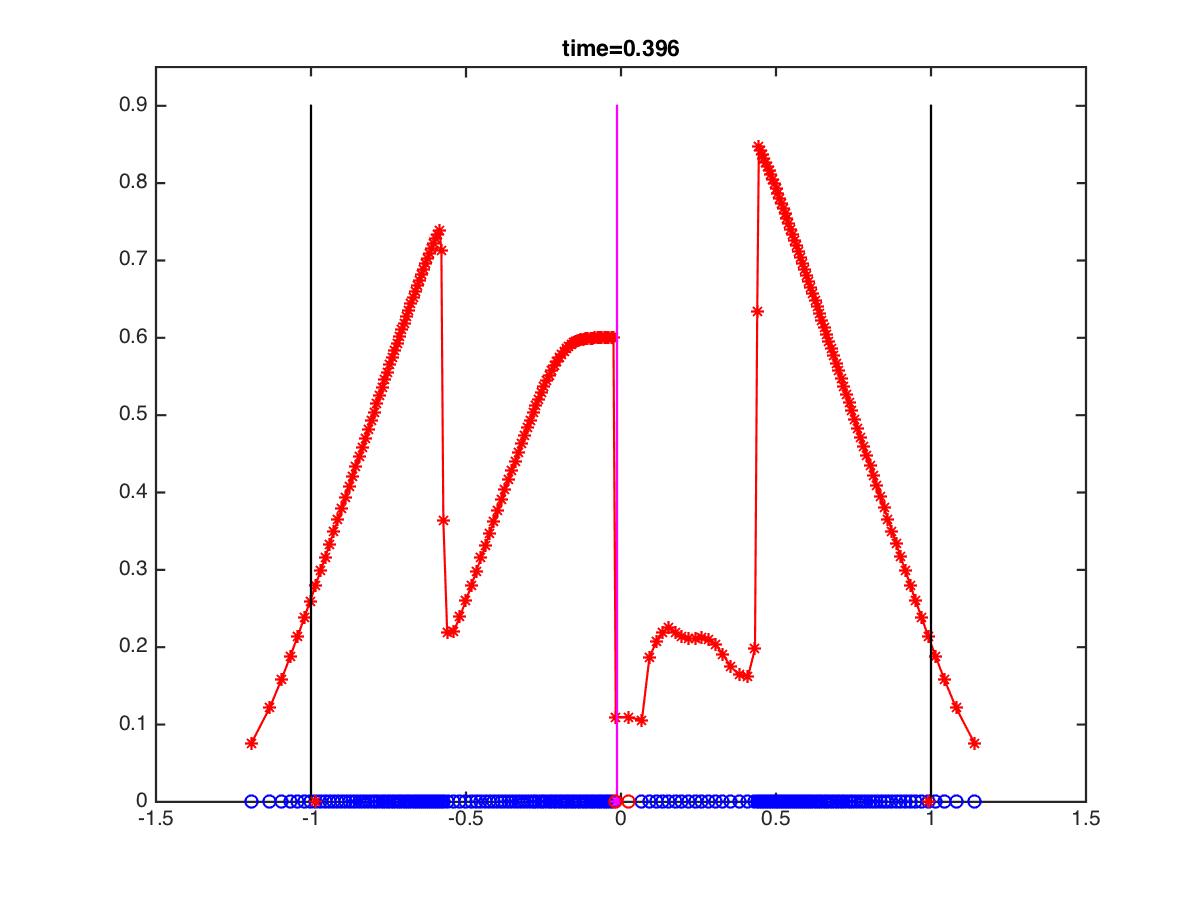}\\
\includegraphics[width=.32\textwidth]{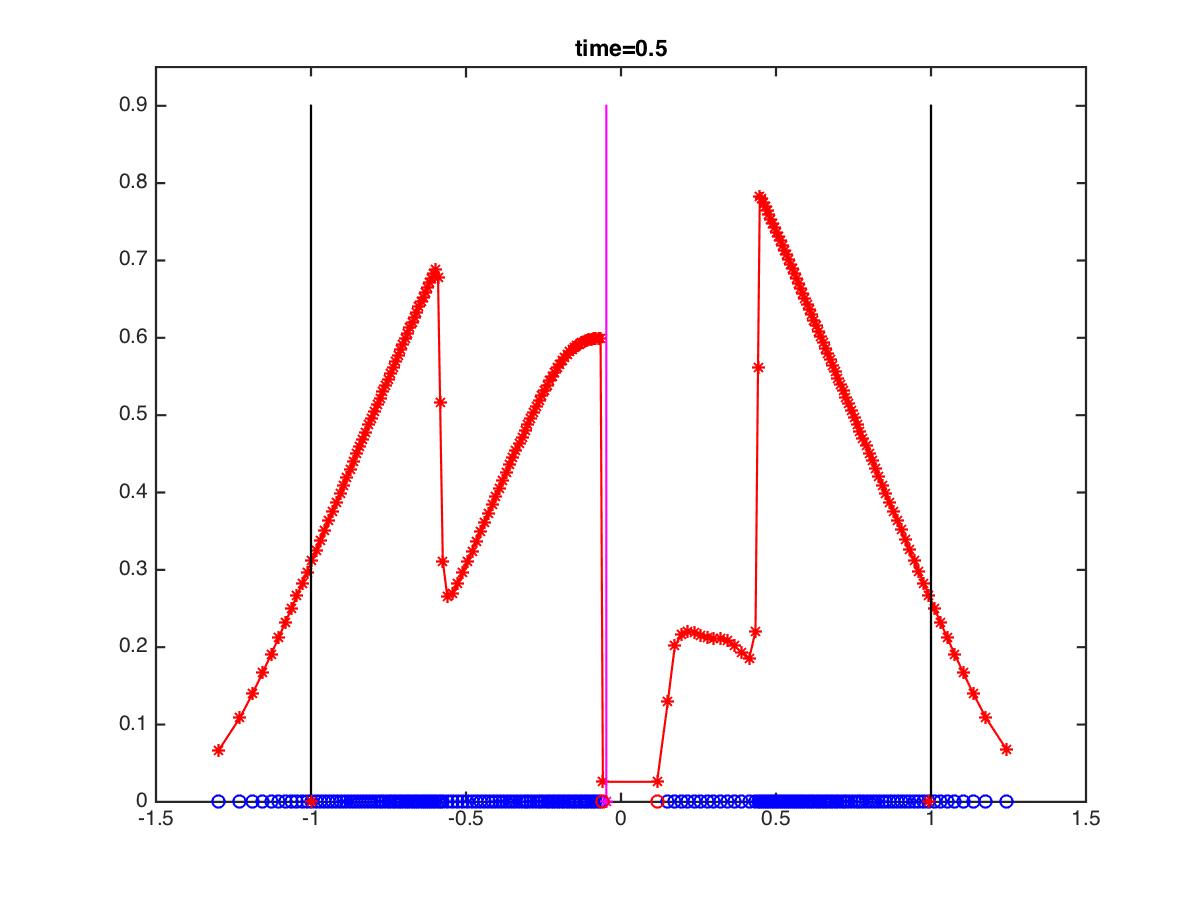}
\includegraphics[width=.32\textwidth]{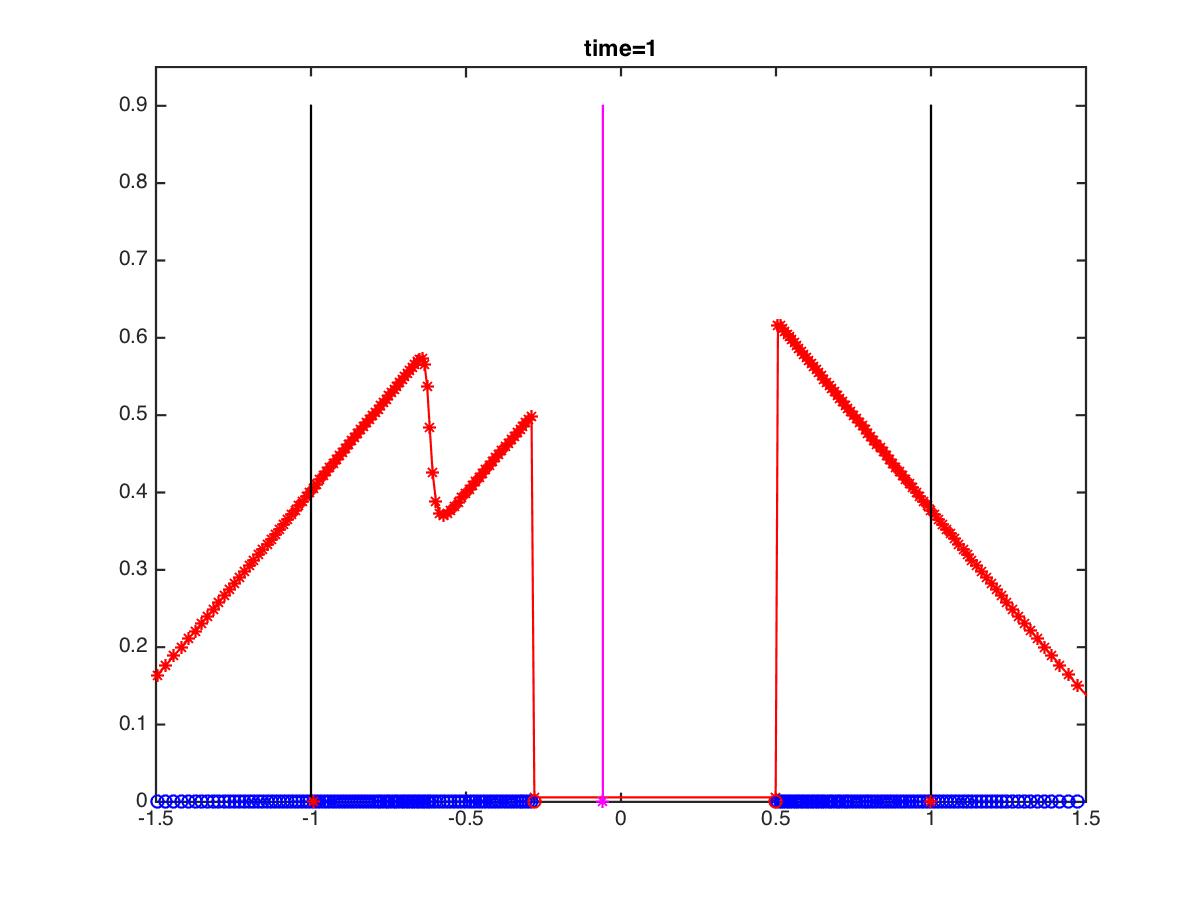}
\caption{Evolution of $\rho^n$ (in red) with initial data given in \eqref{Steps}.  The blu dots represent particles positions. The magenta vertical line is the turning point. In second and third snapshot we see mass transfer across the turning point and non-classical shock. \label{fig:Test 5}}
\end{figure}
In \figurename~\ref{fig:Comp} we compare the particle method and a classical Godunov scheme.
It is evident that the two methods, though conceptually different, produce approximate solutions are in a good agreement.

\begin{figure}
\centering
\includegraphics[width=.32\textwidth]{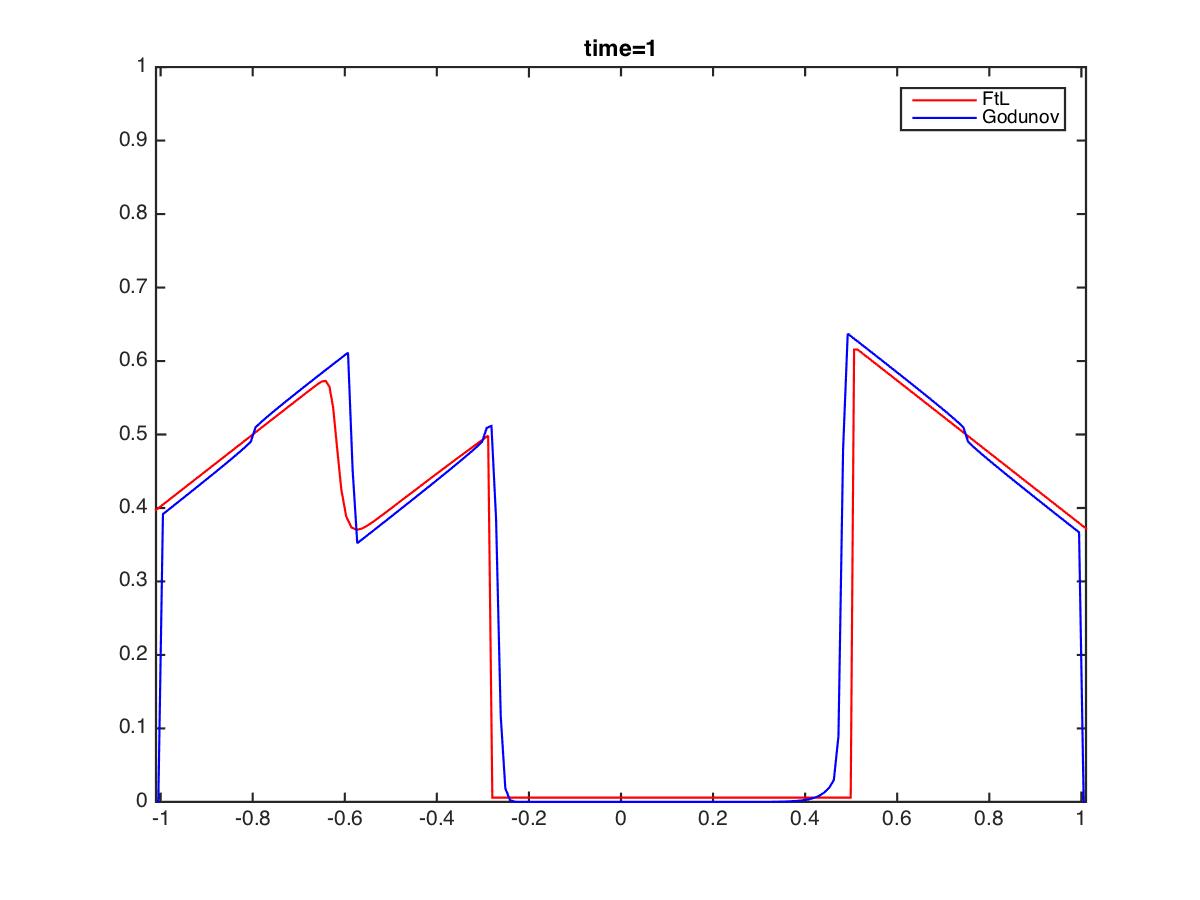}
\includegraphics[width=.32\textwidth]{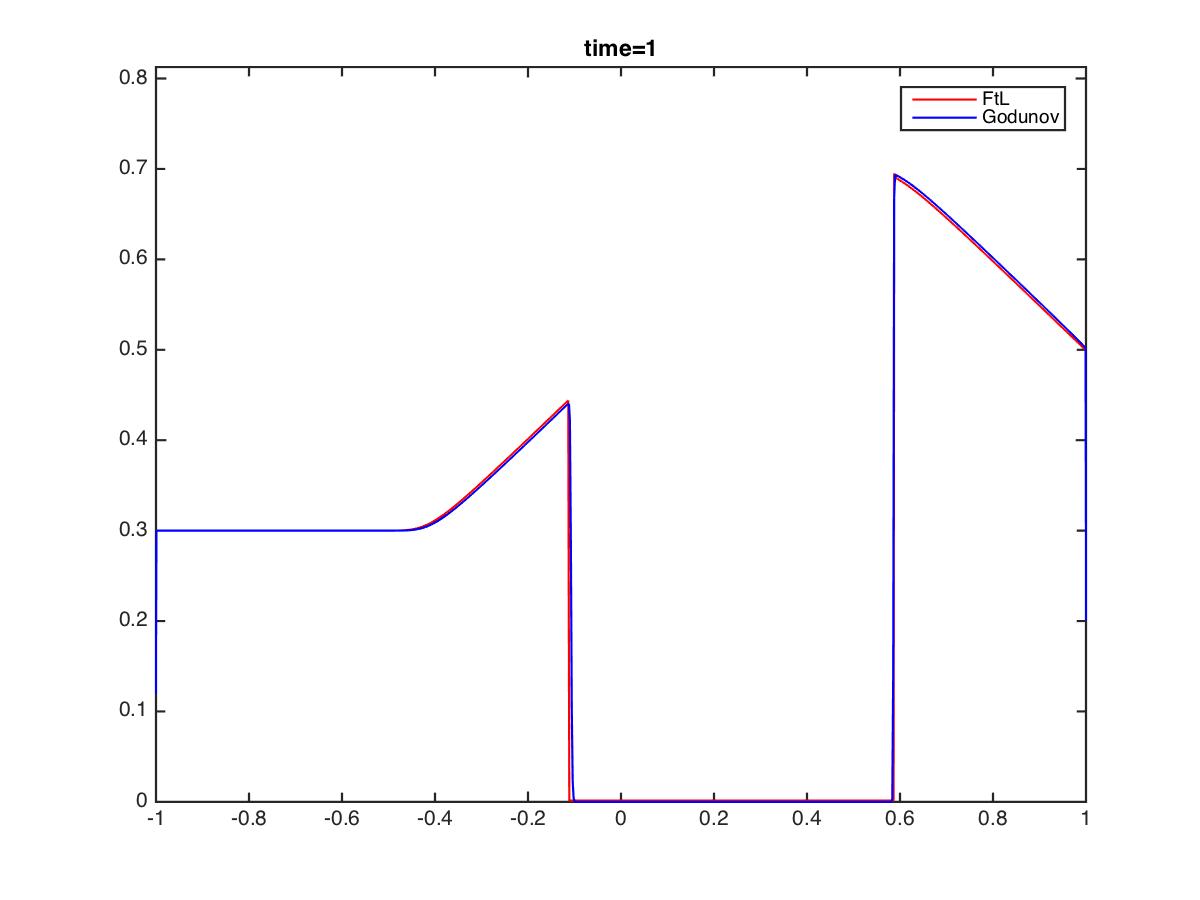}
\caption{Comparison between the FTL (in red) and the Godunov (in blu) schemes for the Hughes model \eqref{eq:model}.
On the left for the initial datum given in \eqref{Steps}, $N=100$ and $500$ time iterations.
On the right for the initial datum $\bar{\rho} \doteq 0.3 \, \mathbf{1}_{[-1,0]}+0.7 \, \mathbf{1}_{(0,1]}$, $N=1000$ and $1500$ time iterations.  \label{fig:Comp}}
\end{figure}

\begin{acknowledgement}
MDF and MDR are supported by the GNAMPA (Italian group of Analysis, Probability, and Applications) project \emph{Geometric and qualitative properties of solutions to elliptic and parabolic equations}. SF and MDR are supported by the GNAMPA (Italian group of Analysis, Probability, and Applications) project \emph{Analisi e stabilit\`a per modelli di equazioni alle derivate parziali nella matematica applicata}. GR was partially supported by ITN-ETN Marie Curie Actions ModCompShock - `Modelling and Computation of Shocks and Interfaces'.
\end{acknowledgement}


\begin{thebibliography}{10}

\bibitem{AmadoriColomboboundary1997}
D.~Amadori and R.~M. Colombo.
\newblock Continuous dependence for {$2\times 2$} conservation laws with
  boundary.
\newblock {\em J. Differential Equations}, 138(2):229--266, 1997.

\bibitem{AmadoriDiFrancesco}
D.~Amadori and M.~Di~Francesco.
\newblock The one-dimensional {H}ughes model for pedestrian flow:
  {R}iemann-type solutions.
\newblock {\em Acta Math. Sci. Ser. B Engl. Ed.}, 32(1):259--280, 2012.

\bibitem{AmadoriGoatinRosini}
D.~Amadori, P.~Goatin, and M.~D. Rosini.
\newblock Existence results for {H}ughes' model for pedestrian flows.
\newblock {\em J. Math. Anal. Appl.}, 420(1):387--406, 2014.

\bibitem{AGS}
L.~Ambrosio, N.~Gigli, and G.~Savar\'e.
\newblock {\em {Gradient flows in metric spaces and in the space of probability
  measures. 2nd ed.}}
\newblock {Lectures in Mathematics, ETH Z\"urich. Basel: Birkh\"auser.}, 2008.

\bibitem{BCJMU-order2}
B.~Andreianov, C.~Donadello, U.~Razafison, J.~Y. Rolland, and M.~D. Rosini.
\newblock Solutions of the {A}w-{R}ascle-{Z}hang system with point constraints.
\newblock {\em Networks and Heterogeneous Media}, 11(1):29--47, 2016.

\bibitem{donadello_rosini}
B.~Andreianov, C.~Donadello, and M.~D. Rosini.
\newblock A second-order model for vehicular traffics with local point
  constraints on the flow.
\newblock {\em Mathematical Models and Methods in Applied Sciences},
  26(04):751--802, 2016.

\bibitem{Aubin2010963}
J.-P. Aubin.
\newblock Macroscopic traffic models: Shifting from densities to `celerities'.
\newblock {\em Applied Mathematics and Computation}, 217(3):963 -- 971, 2010.

\bibitem{AKMR2002}
A.~Aw, A.~Klar, T.~Materne, and M.~Rascle.
\newblock Derivation of continuum traffic flow models from microscopic
  {F}ollow-the-{L}eader models.
\newblock {\em SIAM Journal on Applied Mathematics}, 63(1):259--278, 2002.

\bibitem{ARZ1}
A.~Aw and M.~Rascle.
\newblock Resurrection of ``second order'' models of traffic flow.
\newblock {\em SIAM J. Appl. Math.}, 60(3):916--938 (electronic), 2000.

\bibitem{BardosleRouxNedelec}
C.~Bardos, A.~Y. le~Roux, and J.-C. N{\'e}d{\'e}lec.
\newblock First order quasilinear equations with boundary conditions.
\newblock {\em Comm. Partial Differential Equations}, 4(9):1017--1034, 1979.

\bibitem{bellomo_bellouquid_2011}
N.~Bellomo and A.~Bellouquid.
\newblock On the modeling of crowd dynamics: looking at the beautiful shapes of
  swarms.
\newblock {\em Networks and Heterogeneous Media}, 6:383--399, 2011.

\bibitem{Bellomo2002}
N.~Bellomo, M.~Delitala, and V.~Coscia.
\newblock On the mathematical theory of vehicular traffic flow. {I}. {F}luid
  dynamic and kinetic modelling.
\newblock {\em Math. Models Methods Appl. Sci.}, 12(12):1801--1843, 2002.

\bibitem{bellomo2011modeling}
N.~Bellomo and C.~Dogbe.
\newblock On the modeling of traffic and crowds: a survey of models,
  speculations, and perspectives.
\newblock {\em SIAM Rev.}, 53(3):409--463, 2011.

\bibitem{degond_rascle}
F.~Berthelin, P.~Degond, M.~Delitala, and M.~Rascle.
\newblock A model for the formation and evolution of traffic jams.
\newblock {\em Arch. Ration. Mech. Anal.}, 187(2):185--220, 2008.

\bibitem{BBL}
F.~Bolley, Y.~Brenier, and G.~Loeper.
\newblock Contractive metrics for scalar conservation laws.
\newblock {\em J. Hyperbolic Differ. Equ.}, 2(1):91--107, 2005.

\bibitem{grenier}
Y.~Brenier and E.~Grenier.
\newblock Sticky particles and scalar conservation laws.
\newblock {\em SIAM J. Numer. Anal.}, 35(6):2317--2328 (electronic), 1998.

\bibitem{Bre92}
A.~Bressan.
\newblock Global solutions of systems of conservation laws by wave-front
  tracking.
\newblock {\em J. Math. Anal. Appl.}, 170(2):414--432, 1992.

\bibitem{BressanBook}
A.~Bressan.
\newblock {\em Hyperbolic systems of conservation laws}, volume~20 of {\em
  Oxford Lecture Series in Mathematics and its Applications}.
\newblock Oxford University Press, Oxford, 2000.
\newblock The one-dimensional Cauchy problem.

\bibitem{BurgerDiFrancescoMarkowichWolfram}
M.~Burger, M.~Di~Francesco, P.~A. Markowich, and M.-T. Wolfram.
\newblock Mean field games with nonlinear mobilities in pedestrian dynamics.
\newblock {\em Discrete Contin. Dyn. Syst. Ser. B}, 19(5):1311--1333, 2014.

\bibitem{CDL1}
J.~A. Carrillo, M.~Di~Francesco, and C.~Lattanzio.
\newblock Contractivity of {W}asserstein metrics and asymptotic profiles for
  scalar conservation laws.
\newblock {\em J. Differential Equations}, 231(2):425--458, 2006.

\bibitem{carrillo_martin_wolfram}
J.~A. Carrillo, S.~Martin, and M.-T. Wolfram.
\newblock An improved version of the {H}ughes model for pedestrian flow.
\newblock {\em Mathematical Models and Methods in Applied Sciences},
  26(04):671--697, 2016.

\bibitem{chalonsgoatin2007}
C.~Chalons and P.~Goatin.
\newblock Transport-equilibrium schemes for computing contact discontinuities
  in traffic flow modeling.
\newblock {\em Commun. Math. Sci.}, 5(3):533--551, 09 2007.

\bibitem{chen_rascle}
G.-Q. Chen and M.~Rascle.
\newblock Initial layers and uniqueness of weak entropy solutions to hyperbolic
  conservation laws.
\newblock {\em Arch. Ration. Mech. Anal.}, 153(3):205--220, 2000.

\bibitem{ColomboMarson}
R.~M. Colombo and A.~Marson.
\newblock A {H}\"older continuous {ODE} related to traffic flow.
\newblock {\em Proc. Roy. Soc. Edinburgh Sect. A}, 133(4):759--772, 2003.

\bibitem{ColomboRosiniboundary}
R.~M. Colombo and M.~D. Rosini.
\newblock Well posedness of balance laws with boundary.
\newblock {\em J. Math. Anal. Appl.}, 311(2):683--702, 2005.

\bibitem{ColomboRossi}
R.~M. Colombo and E.~Rossi.
\newblock On the micro-macro limit in traffic flow.
\newblock {\em Rend. Semin. Mat. Univ. Padova}, 131:217--235, 2014.

\bibitem{Daf72}
C.~M. Dafermos.
\newblock Polygonal approximations of solutions of the initial value problem
  for a conservation law.
\newblock {\em J. Math. Anal. Appl.}, 38:33--41, 1972.

\bibitem{Daganzo2005187}
C.~F. Daganzo.
\newblock A variational formulation of kinematic waves: basic theory and
  complex boundary conditions.
\newblock {\em Transportation Research Part B: Methodological}, 39(2):187--196,
  2005.

\bibitem{DF_fagioli_rosini}
M.~Di~Francesco, S.~Fagioli, and M.~D. Rosini.
\newblock Many particle approximation for the {A}w-{R}ascle-{Z}hang second
  order model for vehicular traffic.
\newblock Preprint, 2015.
\newblock url: http://arxiv.org/abs/1511.02700.

\bibitem{DF_fagioli_rosini_UMI}
M.~Di~Francesco, S.~Fagioli, and M.~D. Rosini.
\newblock Deterministic particle approximation of scalar conservation laws.
\newblock {\em arXiv preprint arXiv:1605.05883}, 2016.

\bibitem{DF_fagioli_rosini_russo}
M.~Di~Francesco, S.~Fagioli, M.~D. Rosini, and G.~Russo.
\newblock Deterministic particle approximation of the {H}ughes model in one
  space dimension.
\newblock {\em arXiv preprint arXiv:1602.06153}, 2016.

\bibitem{DiFrancescoMarkowichPietschmannWolfram}
M.~Di~Francesco, P.~A. Markowich, J.-F. Pietschmann, and M.-T. Wolfram.
\newblock On the {H}ughes' model for pedestrian flow: the one-dimensional case.
\newblock {\em J. Differential Equations}, 250(3):1334--1362, 2011.

\bibitem{MarcoMaxARMA2015}
M.~Di~Francesco and M.~D. Rosini.
\newblock Rigorous derivation of nonlinear scalar conservation laws from
  {F}ollow-the-{L}eader type models via many particle limit.
\newblock {\em Archive for Rational Mechanics and Analysis}, 217(3):831--871,
  2015.

\bibitem{dip76}
R.~J. DiPerna.
\newblock Global existence of solutions to nonlinear hyperbolic systems of
  conservation laws.
\newblock {\em J. Differential Equations}, 20(1):187--212, 1976.

\bibitem{dobrushin}
R.~L. Dobru{\v{s}}in.
\newblock Vlasov equations.
\newblock {\em Funktsional. Anal. i Prilozhen.}, 13(2):48--58, 96, 1979.

\bibitem{dubois_lefloch}
F.~Dubois and P.~LeFloch.
\newblock Boundary conditions for nonlinear hyperbolic systems of conservation
  laws.
\newblock {\em J. Differential Equations}, 71(1):93--122, 1988.

\bibitem{El-KhatibGoatinRosini}
N.~El-Khatib, P.~Goatin, and M.~D. Rosini.
\newblock On entropy weak solutions of {H}ughes' model for pedestrian motion.
\newblock {\em Z. Angew. Math. Phys.}, 64(2):223--251, 2013.

\bibitem{ferrari}
P.~A. Ferrari.
\newblock Shock fluctuations in asymmetric simple exclusion.
\newblock {\em Probab. Theory Related Fields}, 91(1):81--101, 1992.

\bibitem{ferrari_TASEP}
P.~L. Ferrari and P.~Nejjar.
\newblock Shock fluctuations in flat {TASEP} under critical scaling.
\newblock {\em J. Stat. Phys.}, 160(4):985--1004, 2015.

\bibitem{ferreira}
R.~E. Ferreira and C.~I. Kondo.
\newblock Glimm method and wave-front tracking for the {A}w-{R}ascle traffic
  flow model.
\newblock {\em Far East J. Math. Sci.}, 43:203--233, 2010.

\bibitem{glimm}
J.~Glimm.
\newblock Solutions in the large for nonlinear hyperbolic systems of equations.
\newblock {\em Comm. Pure Appl. Math.}, 18:697--715, 1965.

\bibitem{GoatinMimault}
P.~Goatin and M.~Mimault.
\newblock The wave-front tracking algorithm for {H}ughes' model of pedestrian
  motion.
\newblock {\em SIAM J. Sci. Comput.}, 35(3):B606--B622, 2013.

\bibitem{godvik}
M.~Godvik and H.~Hanche-Olsen.
\newblock Existence of solutions for the {A}w-{R}ascle traffic flow model with
  vacuum.
\newblock {\em Journal of Hyperbolic Differential Equations}, 05(01):45--63,
  2008.

\bibitem{gosse_toscani}
L.~Gosse and G.~Toscani.
\newblock Identification of asymptotic decay to self-similarity for
  one-dimensional filtration equations.
\newblock {\em SIAM J. Numer. Anal.}, 43(6):2590--2606 (electronic), 2006.

\bibitem{Greenberg}
H.~Greenberg.
\newblock An analysis of traffic flow.
\newblock {\em Operations Research}, 7(1):79--85, 1959.

\bibitem{Greenshields}
B.~Greenshields.
\newblock A study of traffic capacity.
\newblock {\em Proceedings of the Highway Research Board}, 14:448--477, 1935.

\bibitem{Hoff83}
D.~Hoff.
\newblock {The Sharp Form of Oleinik's Entropy Condition in Several Space
  Variables}.
\newblock {\em Transactions of the American Mathematical Society},
  276(2):707--714, 1983.

\bibitem{holden2015front}
H.~Holden and N.~H. Risebro.
\newblock {\em Front tracking for hyperbolic conservation laws}, volume 152.
\newblock Springer, 2015.

\bibitem{Hughes2002}
R.~L. Hughes.
\newblock A continuum theory for the flow of pedestrians.
\newblock {\em Transportation Research Part B: Methodological}, 36(6):507 --
  535, 2002.

\bibitem{Hughes2003}
R.~L. Hughes.
\newblock The flow of human crowds.
\newblock In {\em Annual review of fluid mechanics, {V}ol. 35}, volume~35 of
  {\em Annu. Rev. Fluid Mech.}, pages 169--182. Annual Reviews, Palo Alto, CA,
  2003.

\bibitem{landim}
C.~Kipnis and C.~Landim.
\newblock {\em Scaling limits of interacting particle systems}, volume 320 of
  {\em Grundlehren der Mathematischen Wissenschaften [Fundamental Principles of
  Mathematical Sciences]}.
\newblock Springer-Verlag, Berlin, 1999.

\bibitem{Kruzhkov}
S.~N. Kruzhkov.
\newblock First order quasilinear equations with several independent variables.
\newblock {\em Mat. Sb. (N.S.)}, 81 (123):228--255, 1970.

\bibitem{LWR1}
M.~J. Lighthill and G.~B. Whitham.
\newblock On kinematic waves. {II}. {A} theory of traffic flow on long crowded
  roads.
\newblock {\em Proc. Roy. Soc. London. Ser. A.}, 229:317--345, 1955.

\bibitem{lions_perthame_tadmor}
P.-L. Lions, B.~Perthame, and E.~Tadmor.
\newblock A kinetic formulation of multidimensional scalar conservation laws
  and related equations.
\newblock {\em J. American Math. Society}, 7:169–191, 1994.

\bibitem{matthes_osberger}
D.~Matthes and H.~Osberger.
\newblock Convergence of a variational {L}agrangian scheme for a nonlinear
  drift diffusion equation.
\newblock {\em ESAIM Math. Model. Numer. Anal.}, 48(3):697--726, 2014.

\bibitem{morrey}
C.~B. Morrey, Jr.
\newblock On the derivation of the equations of hydrodynamics from statistical
  mechanics.
\newblock {\em Comm. Pure Appl. Math.}, 8:279--326, 1955.

\bibitem{neunzert_klar_struckmeier}
H.~Neunzert, A.~Klar, and J.~Struckmeier.
\newblock Particle methods: theory and applications.
\newblock In {\em I{CIAM} 95 ({H}amburg, 1995)}, volume~87 of {\em Math. Res.},
  pages 281--306. Akademie Verlag, Berlin, 1996.

\bibitem{Newell}
G.~F. Newell.
\newblock A simplified theory of kinematic waves in highway traffic.
\newblock {\em Transportation Research Part B: Methodological}, 27(4):281--313,
  1993.

\bibitem{oleinik}
O.~A. Oleinik.
\newblock Discontinuous solutions of nonlinear differential equations.
\newblock {\em Amer. Math. Soc. Transl. (2)}, 26:95--172, 1963.

\bibitem{onsager}
L.~Onsager.
\newblock Crystal statistics. {I}. {A} two-dimensional model with an
  order-disorder transition.
\newblock {\em Phys. Rev. (2)}, 65:117--149, 1944.

\bibitem{PiccoliTosinsurvey}
B.~Piccoli and A.~Tosin.
\newblock Vehicular traffic: A review of continuum mathematical models.
\newblock In R.~A. Meyers, editor, {\em Encyclopedia of Complexity and Systems
  Science}. Springer New York, 2009.

\bibitem{pip}
L.~A. Pipes.
\newblock Car following models and the fundamental diagram of road traffic.
\newblock {\em Transp. Res.}, 1:21--29, 1967.

\bibitem{LWR2}
P.~I. Richards.
\newblock Shock waves on the highway.
\newblock {\em OPERATIONS RESEARCH}, 4(1):42--51, 1956.

\bibitem{rosini_book}
M.~D. Rosini.
\newblock {\em Macroscopic models for vehicular flows and crowd dynamics:
  theory and applications}.
\newblock Understanding Complex Systems. Springer, Heidelberg, 2013.

\bibitem{russo}
G.~Russo.
\newblock Deterministic diffusion of particles.
\newblock {\em Comm. on Pure and Applied Mathematics}, 43:697--733, 1990.

\bibitem{TwarogowskaGoatinDuvigneau}
M.~Twarogowska, P.~Goatin, and R.~Duvigneau.
\newblock Macroscopic modeling and simulations of room evacuation.
\newblock {\em Appl. Math. Model.}, 38(24):5781--5795, 2014.

\bibitem{Underwood}
R.~T. Underwood.
\newblock Speed, volume, and density relationship.
\newblock In {\em Quality and theory of traffic flow: a symposium}, pages
  141--188. Greenshields, B.D. and Bureau of Highway Traffic, Yale University,
  1961.

\bibitem{villani}
C.~Villani.
\newblock {\em Topics in optimal transportation}, volume~58 of {\em Graduate
  Studies in Mathematics}.
\newblock American Mathematical Society, Providence, RI, 2003.

\bibitem{ARZ2}
H.~M. Zhang.
\newblock A non-equilibrium traffic model devoid of gas-like behavior.
\newblock {\em Transportation Research Part B: Methodological}, 36(3):275 --
  290, 2002.

\end{thebibliography}
\end{document}